\documentclass[a4paper, 11pt]{amsart}  %option: draft or final
\usepackage{latexsym, amsmath, amsthm, amssymb, setspace, verbatim, bbm}
\usepackage[all]{xy}
\usepackage[utf8]{inputenc} \usepackage[T1]{fontenc} \usepackage{lmodern}
\usepackage{hyperref, aliascnt}
\usepackage{enumitem}
\usepackage{mathrsfs}

\title[The $KK$-uniqueness theorem]{The uniqueness theorem for Kasparov theory}
\author{Gábor Szabó}
\address{Department of Mathematics, KU Leuven, Celestijnenlaan 200b, box 2400\linebreak
\phantom{-}\hspace{2mm} B-3001 Leuven, Belgium}
\email{gabor.szabo@kuleuven.be}
\thanks{Funded by the European Union. Views and opinions expressed are those of the authors only and do not necessarily reflect those of the European Union or the European Research Council. Neither the EU nor the ERC can be held responsible for them.}
\subjclass[2020]{19K35, 46L55, 46L35}

\numberwithin{equation}{section}

\begin{document}

% math
\renewcommand\matrix[1]{\left(\begin{array}{*{10}{c}} #1 \end{array}\right)}  % Matrix
\newcommand\set[1]{\left\{#1\right\}}  % Menge

%% Besondere Variablen
%Zahlmengen-Stil
\newcommand{\IA}[0]{\mathbb{A}} \newcommand{\IB}[0]{\mathbb{B}}
\newcommand{\IC}[0]{\mathbb{C}} \newcommand{\ID}[0]{\mathbb{D}}
\newcommand{\IE}[0]{\mathbb{E}} \newcommand{\IF}[0]{\mathbb{F}}
\newcommand{\IG}[0]{\mathbb{G}} \newcommand{\IH}[0]{\mathbb{H}}
\newcommand{\II}[0]{\mathbb{I}} \renewcommand{\IJ}[0]{\mathbb{J}}
\newcommand{\IK}[0]{\mathbb{K}} \newcommand{\IL}[0]{\mathbb{L}}
\newcommand{\IM}[0]{\mathbb{M}} \newcommand{\IN}[0]{\mathbb{N}}
\newcommand{\IO}[0]{\mathbb{O}} \newcommand{\IP}[0]{\mathbb{P}}
\newcommand{\IQ}[0]{\mathbb{Q}} \newcommand{\IR}[0]{\mathbb{R}}
\newcommand{\IS}[0]{\mathbb{S}} \newcommand{\IT}[0]{\mathbb{T}}
\newcommand{\IU}[0]{\mathbb{U}} \newcommand{\IV}[0]{\mathbb{V}}
\newcommand{\IW}[0]{\mathbb{W}} \newcommand{\IX}[0]{\mathbb{X}}
\newcommand{\IY}[0]{\mathbb{Y}} \newcommand{\IZ}[0]{\mathbb{Z}}

\newcommand{\Ia}[0]{\mathbbmss{a}} \newcommand{\Ib}[0]{\mathbbmss{b}}
\newcommand{\Ic}[0]{\mathbbmss{c}} \newcommand{\Id}[0]{\mathbbmss{d}}
\newcommand{\Ie}[0]{\mathbbmss{e}} \newcommand{\If}[0]{\mathbbmss{f}}
\newcommand{\Ig}[0]{\mathbbmss{g}} \newcommand{\Ih}[0]{\mathbbmss{h}}
\newcommand{\Ii}[0]{\mathbbmss{i}} \newcommand{\Ij}[0]{\mathbbmss{j}}
\newcommand{\Ik}[0]{\mathbbmss{k}} \newcommand{\Il}[0]{\mathbbmss{l}}
\renewcommand{\Im}[0]{\mathbbmss{m}} \newcommand{\In}[0]{\mathbbmss{n}}
\newcommand{\Io}[0]{\mathbbmss{o}} \newcommand{\Ip}[0]{\mathbbmss{p}}
\newcommand{\Iq}[0]{\mathbbmss{q}} \newcommand{\Ir}[0]{\mathbbmss{r}}
\newcommand{\Is}[0]{\mathbbmss{s}} \newcommand{\It}[0]{\mathbbmss{t}}
\newcommand{\Iu}[0]{\mathbbmss{u}} \newcommand{\Iv}[0]{\mathbbmss{v}}
\newcommand{\Iw}[0]{\mathbbmss{w}} \newcommand{\Ix}[0]{\mathbbmss{x}}
\newcommand{\Iy}[0]{\mathbbmss{y}} \newcommand{\Iz}[0]{\mathbbmss{z}}

%Geschwungener Stil
\newcommand{\CA}[0]{\mathcal{A}} \newcommand{\CB}[0]{\mathcal{B}}
\newcommand{\CC}[0]{\mathcal{C}} \newcommand{\CD}[0]{\mathcal{D}}
\newcommand{\CE}[0]{\mathcal{E}} \newcommand{\CF}[0]{\mathcal{F}}
\newcommand{\CG}[0]{\mathcal{G}} \newcommand{\CH}[0]{\mathcal{H}}
\newcommand{\CI}[0]{\mathcal{I}} \newcommand{\CJ}[0]{\mathcal{J}}
\newcommand{\CK}[0]{\mathcal{K}} \newcommand{\CL}[0]{\mathcal{L}}
\newcommand{\CM}[0]{\mathcal{M}} \newcommand{\CN}[0]{\mathcal{N}}
\newcommand{\CO}[0]{\mathcal{O}} \newcommand{\CP}[0]{\mathcal{P}}
\newcommand{\CQ}[0]{\mathcal{Q}} \newcommand{\CR}[0]{\mathcal{R}}
\newcommand{\CS}[0]{\mathcal{S}} \newcommand{\CT}[0]{\mathcal{T}}
\newcommand{\CU}[0]{\mathcal{U}} \newcommand{\CV}[0]{\mathcal{V}}
\newcommand{\CW}[0]{\mathcal{W}} \newcommand{\CX}[0]{\mathcal{X}}
\newcommand{\CY}[0]{\mathcal{Y}} \newcommand{\CZ}[0]{\mathcal{Z}}

%Script Stil
\newcommand{\FA}[0]{\mathfrak{A}} \newcommand{\FB}[0]{\mathfrak{B}}
\newcommand{\FC}[0]{\mathfrak{C}} \newcommand{\FD}[0]{\mathfrak{D}}
\newcommand{\FE}[0]{\mathfrak{E}} \newcommand{\FF}[0]{\mathfrak{F}}
\newcommand{\FG}[0]{\mathfrak{G}} \newcommand{\FH}[0]{\mathfrak{H}}
\newcommand{\FI}[0]{\mathfrak{I}} \newcommand{\FJ}[0]{\mathfrak{J}}
\newcommand{\FK}[0]{\mathfrak{K}} \newcommand{\FL}[0]{\mathfrak{L}}
\newcommand{\FM}[0]{\mathfrak{M}} \newcommand{\FN}[0]{\mathfrak{N}}
\newcommand{\FO}[0]{\mathfrak{O}} \newcommand{\FP}[0]{\mathfrak{P}}
\newcommand{\FQ}[0]{\mathfrak{Q}} \newcommand{\FR}[0]{\mathfrak{R}}
\newcommand{\FS}[0]{\mathfrak{S}} \newcommand{\FT}[0]{\mathfrak{T}}
\newcommand{\FU}[0]{\mathfrak{U}} \newcommand{\FV}[0]{\mathfrak{V}}
\newcommand{\FW}[0]{\mathfrak{W}} \newcommand{\FX}[0]{\mathfrak{X}}
\newcommand{\FY}[0]{\mathfrak{Y}} \newcommand{\FZ}[0]{\mathfrak{Z}}

\newcommand{\Fa}[0]{\mathfrak{a}} \newcommand{\Fb}[0]{\mathfrak{b}}
\newcommand{\Fc}[0]{\mathfrak{c}} \newcommand{\Fd}[0]{\mathfrak{d}}
\newcommand{\Fe}[0]{\mathfrak{e}} \newcommand{\Ff}[0]{\mathfrak{f}}
\newcommand{\Fg}[0]{\mathfrak{g}} \newcommand{\Fh}[0]{\mathfrak{h}}
\newcommand{\Fi}[0]{\mathfrak{i}} \newcommand{\Fj}[0]{\mathfrak{j}}
\newcommand{\Fk}[0]{\mathfrak{k}} \newcommand{\Fl}[0]{\mathfrak{l}}
\newcommand{\Fm}[0]{\mathfrak{m}} \newcommand{\Fn}[0]{\mathfrak{n}}
\newcommand{\Fo}[0]{\mathfrak{o}} \newcommand{\Fp}[0]{\mathfrak{p}}
\newcommand{\Fq}[0]{\mathfrak{q}} \newcommand{\Fr}[0]{\mathfrak{r}}
\newcommand{\Fs}[0]{\mathfrak{s}} \newcommand{\Ft}[0]{\mathfrak{t}}
\newcommand{\Fu}[0]{\mathfrak{u}} \newcommand{\Fv}[0]{\mathfrak{v}}
\newcommand{\Fw}[0]{\mathfrak{w}} \newcommand{\Fx}[0]{\mathfrak{x}}
\newcommand{\Fy}[0]{\mathfrak{y}} \newcommand{\Fz}[0]{\mathfrak{z}}

%mathscr style
\newcommand{\SA}[0]{\mathscr{A}} \newcommand{\SB}[0]{\mathscr{B}}
\newcommand{\SC}[0]{\mathscr{C}} \newcommand{\SD}[0]{\mathscr{D}}
\newcommand{\SE}[0]{\mathscr{E}} \newcommand{\SF}[0]{\mathscr{F}}
\newcommand{\SG}[0]{\mathscr{G}} \newcommand{\SH}[0]{\mathscr{H}}
\newcommand{\SI}[0]{\mathscr{I}} \newcommand{\SJ}[0]{\mathscr{J}}
\newcommand{\SK}[0]{\mathscr{K}} \newcommand{\SL}[0]{\mathscr{L}}
\newcommand{\SM}[0]{\mathscr{M}} \newcommand{\SN}[0]{\mathscr{N}}
\newcommand{\SO}[0]{\mathscr{O}} \newcommand{\SP}[0]{\mathscr{P}}
\newcommand{\SQ}[0]{\mathscr{Q}} \newcommand{\SR}[0]{\mathscr{R}}
\renewcommand{\SS}[0]{\mathscr{S}} \newcommand{\ST}[0]{\mathscr{T}}
\newcommand{\SU}[0]{\mathscr{U}} \newcommand{\SV}[0]{\mathscr{V}}
\newcommand{\SW}[0]{\mathscr{W}} \newcommand{\SX}[0]{\mathscr{X}}
\newcommand{\SY}[0]{\mathscr{Y}} \newcommand{\SZ}[0]{\mathscr{Z}}

%Modifikation der Variablen
\renewcommand{\phi}[0]{\varphi}
\newcommand{\eps}[0]{\varepsilon}

%zusätzliche Features
\newcommand{\id}[0]{\operatorname{id}}		% Identität
\newcommand{\eins}[0]{\mathbf{1}}			% Eine Eins in allgemeinerem Kontext, z.B. in einem Ring
\newcommand{\diag}[0]{\operatorname{diag}}
\newcommand{\ad}[0]{\operatorname{Ad}}
\newcommand{\ev}[0]{\operatorname{ev}}
\newcommand{\fin}[0]{{\subset\!\!\!\subset}}
\newcommand{\Aut}[0]{\operatorname{Aut}}
\newcommand{\dst}[0]{\displaystyle}
\newcommand{\cstar}[0]{\ensuremath{\mathrm{C}^*}}
\newcommand{\dist}[0]{\operatorname{dist}}
\newcommand{\cc}[0]{\simeq_{\mathrm{cc}}}
\newcommand{\ue}[0]{{~\approx_{\mathrm{u}}}~}
\newcommand{\prim}[0]{\ensuremath{\mathrm{Prim}}}
\newcommand{\GL}[0]{\operatorname{GL}}
\newcommand{\Hom}[0]{\operatorname{Hom}}
\newcommand{\sep}[0]{\ensuremath{\mathrm{sep}}}
\newcommand{\wc}[0]{\preccurlyeq}
\newcommand{\supp}[0]{\operatorname{supp}}
\renewcommand{\asymp}[0]{\sim_{\mathrm{asymp}}}
\newcommand{\asuc}[0]{\precsim_{\mathrm{as.u.}}}
\newcommand{\MvN}[0]{\ensuremath{\mathrm{MvN}}}
\newcommand{\nuc}[0]{\ensuremath{\mathrm{nuc}}}

% theorems
\newtheorem{satz}{Satz}[section]		% <--- optional, zählt so mit den Abschnitten

\newaliascnt{corCT}{satz}
\newtheorem{cor}[corCT]{Corollary}
\aliascntresetthe{corCT}
\providecommand*{\corCTautorefname}{Corollary}
\newaliascnt{lemmaCT}{satz}
\newtheorem{lemma}[lemmaCT]{Lemma}
\aliascntresetthe{lemmaCT}
\providecommand*{\lemmaCTautorefname}{Lemma}
\newaliascnt{propCT}{satz}
\newtheorem{prop}[propCT]{Proposition}
\aliascntresetthe{propCT}
\providecommand*{\propCTautorefname}{Proposition}
\newaliascnt{theoremCT}{satz}
\newtheorem{theorem}[theoremCT]{Theorem}
\aliascntresetthe{theoremCT}
\providecommand*{\theoremCTautorefname}{Theorem}
\newtheorem*{theoreme}{Theorem}

\theoremstyle{definition}

\newaliascnt{conjectureCT}{satz}
\newtheorem{conjecture}[conjectureCT]{Conjecture}
\aliascntresetthe{conjectureCT}
\providecommand*{\conjectureCTautorefname}{Conjecture}
\newaliascnt{defiCT}{satz}
\newtheorem{defi}[defiCT]{Definition}
\aliascntresetthe{defiCT}
\providecommand*{\defiCTautorefname}{Definition}
\newtheorem*{defie}{Definition}
\newaliascnt{notaCT}{satz}
\newtheorem{nota}[notaCT]{Notation}
\aliascntresetthe{notaCT}
\providecommand*{\notaCTautorefname}{Notation}
\newtheorem*{notae}{Notation}
\newaliascnt{remCT}{satz}
\newtheorem{rem}[remCT]{Remark}
\aliascntresetthe{remCT}
\providecommand*{\remCTautorefname}{Remark}
\newtheorem*{reme}{Remark}
\newaliascnt{exampleCT}{satz}
\newtheorem{example}[exampleCT]{Example}
\aliascntresetthe{exampleCT}
\providecommand*{\exampleCTautorefname}{Example}
\newaliascnt{questionCT}{satz}
\newtheorem{question}[questionCT]{Question}
\aliascntresetthe{questionCT}
\providecommand*{\questionCTautorefname}{Question}
\newtheorem*{questione}{Question}
\newaliascnt{conventionCT}{satz}
\newtheorem{convention}[conventionCT]{Convention}
\aliascntresetthe{conventionCT}
\providecommand*{\conventionCTautorefname}{Convention}

\newcounter{theoremintro}
\renewcommand*{\thetheoremintro}{\Alph{theoremintro}}
\newaliascnt{theoremiCT}{theoremintro}
\newtheorem{theoremi}[theoremiCT]{Theorem}
\aliascntresetthe{theoremiCT}
\providecommand*{\theoremiCTautorefname}{Theorem}
\newaliascnt{defiiCT}{theoremintro}
\newtheorem{defii}[defiiCT]{Definition}
\aliascntresetthe{defiiCT}
\providecommand*{\defiiCTautorefname}{Definition}
\newaliascnt{coriCT}{theoremintro}
\newtheorem{cori}[coriCT]{Corollary}
\aliascntresetthe{coriCT}
\providecommand*{\coriCTautorefname}{Corollary}
\newaliascnt{questioniCT}{theoremintro}
\newtheorem{questioni}[questioniCT]{Question}
\aliascntresetthe{questioniCT}
\providecommand*{\questioniCTautorefname}{Question}

%%%%%%%%%%%%%%%%%%%%%%%%%%%%%%%%%%%%%%%%%%%%

\begin{abstract} 
Answering a question of Carrión et al in their recent landmark paper on \cstar-algebra classification, we prove a general uniqueness theorem for $KK$-theory.
Given arbitrary separable \cstar-algebras $A$ and $B$ and a Cuntz pair consisting of two absorbing representations $\phi,\psi: A\to\CM(B\otimes\CK)$, the induced element of $KK(A,B)$ vanishes if and only if $\phi$ and $\psi$ are strongly asymptotically unitarily equivalent.
This improves upon the Lin--Dadarlat--Eilers stable uniqueness theorem.
The conclusion is deduced by showing the $K_1$-injectivity of an auxiliary \cstar-algebra associated to the \cstar-pair $(A,B)$, which is sometimes called the Paschke dual algebra in the literature.
Most of the article is concerned with the treatment of an umbrella theorem, which yields such a uniqueness theorem for other variants of $KK$-theory.
This encompasses nuclear $KK$-theory, ideal-related $KK$-theory, equivariant $KK$-theory, or any combinations thereof. 
\end{abstract}

\maketitle

\setcounter{tocdepth}{1}
\tableofcontents

%%%%%%%%%%%%%%%%%%%%%%%%%%%

\section*{Introduction}

Kasparov theory, also known as $KK$-theory, is a very powerful invariant for \cstar-algebras.
It is a bivariant version of topological $K$-theory, which includes ordinary $K$-theory and $K$-homology, alongside Brown--Douglas--Fillmore theory \cite{BrownDouglasFilmore77}, as special cases.
Invented by Kasparov \cite{Kasparov81, Kasparov88}, it can be viewed as either a bifunctor from (separable) C*-algebras to abelian groups, or as a way to equip the class of (separable) \cstar-algebras with a more flexible notion of homomorphisms.
Given a second-countable locally compact group $G$, Kasparov's construction also yields such a bifunctor on the category of all (separable) $G$-\cstar-algebras, which provided impactful applications on classical problems such as the Novikov conjecture.
The original approach by Kasparov, commonly referred to as the Kasparov picture, is still the
most common way of treating this subject, but there are other equivalent appproaches that are geared towards different types of applications.

In this article we are mainly interested in the so-called Cuntz picture \cite{Cuntz83} of $KK$-theory. 
In Cuntz' framework, elements in Kasparov's group $KK(A,B)$ for separable \cstar-algebras $A$ and $B$ can be described as homotopy classes of so-called \emph{Cuntz pairs}, i.e., pairs of $*$-homomorphisms $\phi, \psi: A\to\CM(B\otimes\CK)$ such that $\phi(a)-\psi(a) \in B\otimes\CK$ for all $a\in A$.
The vanishing of such a homotopy class can a priori be unraveled, roughly speaking, in terms of the existence of a particular kind of homotopy between the two representations $\phi$ and $\psi$.
In practice, one often deals with pairs of representations that are in a sense very large, which may allow for additional control over the specific form of such a homotopy.
For applications of $KK$-theory to classification theory, which serves as the main motivation for this article, the best conclusion one can hope for is that one can find a continuous path of unitaries in $(B\otimes\CK)^\dagger$ that asymptotically conjugates $\phi$ to $\psi$ in the point-norm topology.
The constraint of the unitaries being chosen in $(B\otimes\CK)^\dagger$ is essential, since finding such a unitary path in the multiplier algebra is neither good enough for most applications, nor is it even sufficient to guarantee the vanishing of the $KK$-class associated to the pair $(\phi,\psi)$ to begin with.
Answering \cite[Question 5.17]{CGSTW23} and \cite[Problem LXII]{SchafhauserTikuisisWhite25}, this article aims to confirm that this best possible outcome is always attainable as soon as a known necessary condition is satisfied.
One would refer to a statement of the form below as a ``$KK$-uniqueness theorem''.

\begin{theoremi} \label{theorem:A}
Suppose that $A$ and $B$ are separable \cstar-algebras and $B$ is stable.
Let $\phi,\psi: A\to\CM(B)$ be two absorbing representations forming a Cuntz pair.
If $[\phi,\psi]=0$ in $KK(A,B)$, then $\phi$ and $\psi$ are strongly asymptotically unitarily equivalent, i.e., there exists a unitary path $v: [0,\infty)\to\CU(\eins+B)$ with $v_0=\eins$ such that $\dst\psi(a)=\lim_{t\to\infty} v_t\phi(a)v_t^*$ for all $a\in A$.
\end{theoremi}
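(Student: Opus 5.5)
The strategy is the one signposted in the abstract: reduce the uniqueness statement to the $K_1$-injectivity of an auxiliary \cstar-algebra attached to a fixed absorbing representation. Absorbing representations are unique up to unitary equivalence by a unitary in $\CM(B)$ with compact perturbation. Using this, we may assume $\psi=\ad(u)\circ\phi$ for some unitary $u\in\CM(B)$ with $[u,\phi(A)]\subseteq B$. Indeed, the Cuntz pair condition together with absorption lets us replace $\psi$ by a conjugate of $\phi$ via a unitary in $\eins+B$ connected to the identity. That replacement does not affect strong asymptotic unitary equivalence, since such relations compose.

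Consider the Paschke dual algebra
\[
D_\phi=\{x\in\CM(B) : [x,\phi(A)]\subseteq B\}
\]
and its ideal $D_\phi\cap\phi(A)'+B$, or more conveniently the relative commutant quotient $\mathfrak{D}=\{x\in\CM(B): [x,\phi(A)]\subseteq B\}/(\text{annihilator part})$. Then $u$ defines a class in $K_1(D_\phi)$. By a Paschke-type duality, $K_1(D_\phi)\cong KK(A,B)$ (up to the usual shift/identification), with $[u]\mapsto[\phi,\ad(u)\circ\phi]$; I would establish this isomorphism first. The hypothesis $[\phi,\psi]=0$ then says $[u]=0$ in $K_1(D_\phi)$.

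The key step, and the main obstacle, is showing that $D_\phi$ is $K_1$-injective: a unitary in $D_\phi$ with trivial $K_1$-class lies in $\CU_0(D_\phi)$. To do this I would use absorption to write $\phi\simeq\phi\oplus\phi^{\oplus\infty}$ via isometries $s_n\in\CM(B)$ commuting with $\phi(A)$ modulo $B$ (or exactly, after adjustment). This lets me dilate a unitary $u\oplus\eins_{k}$ back into $D_\phi$ itself, by an Eilenberg-swindle or rotation trick. The first attempt would be to show that $u\oplus\eins$ is homotopic to $u$ inside $D_\phi$, and that a null-homotopy of $u\oplus u^*\oplus\ldots$ compresses; proper infiniteness and a Cuntz-type argument (properly infinite full units have $K_1$-injective corners) are the natural tools. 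The delicate point is that all isometries must be compatible with $\phi$ modulo $B$, not merely in $\CM(B)$.

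Once $u\in\CU_0(D_\phi)$, take a path $u_t$ in $D_\phi$ from $\eins$ to $u$ (after perturbing by a $B$-unitary). The standard trick then converts this homotopy into an asymptotic path: reparametrize $w_t=u\,u_{f(t)}^*$, and use the quasicentral approximate unit of $B$ relative to $\phi(A)$ and $D_\phi$ to produce $v_t\in\CU(\eins+B)$ with $v_t\phi(a)v_t^*-u\phi(a)u^*\to0$. Concretely, $u_s^*u\phi(a)u^*u_s-\phi(a)\in B$ varies continuously in $s$, and a Kasparov technical lemma/quasicentral approximate unit argument, as in Dadarlat--Eilers, lets one cut these off continuously in $t$. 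Finally we arrange $v_0=\eins$ by concatenating with a path inside $\CU(\eins+B)$, which is available because we started from the identity component.
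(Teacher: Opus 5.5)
Your architecture is the same as the paper's. You identify the class of the pair with the $K_1$-class of a unitary in $\SD_\phi$ via Paschke duality, you prove that $\SD_\phi$ is $K_1$-injective, and you convert a path in $\CU(\SD_\phi)$ starting at $\eins$ into a strong asymptotic unitary equivalence; the paper quotes that last step as \autoref{lem:saue}. The gap is the step you yourself call the main obstacle. You give no argument for $K_1$-injectivity, and the tools you name cannot supply one.

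An Eilenberg swindle in $\SD_\phi$ would need sums $\sum_n r_nxr_n^*$ with isometries $r_n$ commuting with $\phi(A)$. For $x\in\SD_\phi$ one then has $[\sum_n r_nxr_n^*,\phi(a)]=\sum_n r_n[x,\phi(a)]r_n^*$. An infinite repeat of a nonzero element of $B$ never lies in $B$: compress by $r_m$ and use $r_mr_m^*\to 0$ strictly. So swindles only work in the exact commutant $\CM(B)\cap\phi(A)'$. That is why this commutant has vanishing $K$-theory (\autoref{prop:infinite-repeat}), whereas $K_1(\SD_\phi)\cong KK(A,B)$ typically does not vanish. Moreover, an argument using only that $\SD_\phi$ is properly infinite, or in Cuntz standard form, would settle the open question whether every properly infinite unital \cstar-algebra is $K_1$-injective. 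There is no general principle like ``properly infinite full units have $K_1$-injective corners'' to appeal to. Any proof must use that $\SD_\phi$ sits inside a stable multiplier algebra.

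The missing idea is the paper's adaptation of Cuntz--Higson's proof that $\CU(\CM(B))$ is connected (\autoref{thm:K1-injectivity}).
\begin{enumerate}[leftmargin=*,label=\textup{(\arabic*)}]
\item Pass to an infinite repeat, so there are isometries $r_n\in\CM(B)\cap\phi(A)'$ with $\sum_n r_nr_n^*=\eins$.
\item By the Blanchard--Rohde--R{\o}rdam criterion (\autoref{lem:K1-inj-criterion}), it suffices to show that the fixed splitting projection $P=\sum_n r_{2n}r_{2n}^*$ is homotopic to every splitting projection $Q$ with $[Q]_0=0$. By Cuntz, $Q=W(\eins-P)W^*$ for some $W\in\CU(\SD_\phi)$.
\item Take an approximate unit $(e_n)$ of $B$ that is quasicentral for $\phi(A)$ and satisfies $e_{n+1}e_n=e_n$. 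Choose interleaved indices $m_1<n_1<m_2<n_2<\cdots$ of alternating parity.
\item Form the isometries $R_1=\sum_k r_{m_k}a_k$ and $R_2=W\sum_k r_{n_k}b_k$, where $a_k,b_k$ are square roots of consecutive differences of the $e_n$.
\item Quasicentrality makes $\sum_k r_{m_k}[a_k,\phi(a)]$ norm-convergent, so $R_1,R_2\in\SD_\phi$. This is exactly how the swindle obstruction is circumvented.
\item Choosing the indices far out in the strict topology gives $R_1R_1^*\leq\FF\leq P$ and $R_2R_2^*\leq\FH\leq Q$ with $\|\FF\FH\|<1$, hence $\|R_1^*R_2\|<1$.
\item Cuntz--Higson's lemma then yields $P\sim_h\eins-P\sim_h R_1R_1^*\sim_h R_2R_2^*\sim_h\eins-Q\sim_h Q$.
\end{enumerate}

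There are also two smaller inaccuracies. First, absorption gives $\phi\asymp\phi\oplus\psi\asymp\psi$, that is, a norm-continuous path $U_t\in\CU(\CM(B))$, which lies in $\SD_\phi$ because $\phi\equiv_B\psi$. It does not give a unitary in $\eins+B$, so your reduction to $\psi=\ad(u)\circ\phi$ is not justified as written. The reduction is also unnecessary: take $U_1$ as the representative, conclude $U_1\in\CU_0(\SD_\phi)$, extend the path to $U_0=\eins$, and apply \autoref{lem:saue}. Second, $\CU(\eins+B)$ need not be connected. The normalisation $v_0=\eins$ must therefore come out of the construction from a path starting at $\eins$, not from a later concatenation.
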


Prior to this, the seminal theorem concerning the analytical properties of $KK$-theory (in the Cuntz picture) has been the so-called \emph{stable uniqueness theorem} proved independently by Lin \cite{Lin02} and Dadarlat--Eilers \cite{DadarlatEilers01, DadarlatEilers02}.
The theorem states that if an arbitrary Cuntz pair $(\phi, \psi)$ represents the zero element in $KK(A,B)$, then $\phi$ and $\psi$ are asymptotically unitarily equivalent as above (without demanding $v_0=\eins$) after stabilizing both $\phi$ and $\psi$ with an \emph{absorbing} representation.
It has developed into one of the central pieces of methodology in the structure and classification theory of nuclear \cstar-algebras; see for example \cite{TikuisisWhiteWinter17, GongLinNiu20-2, GongLinNiu20, ElliottGongLinNiu20, ElliottGongLinNiu20-2, GongLin20, Schafhauser20, GongLin22, Gabe24, GongLin24, CGSTW23}.
Note that an elegant treatment of the stable uniqueness theorem was recently included in \cite{Cuntz25}.

In the context of both \autoref{theorem:A} and the prior stable uniqueness theorem, the role of the absorbing assumption is that of a necessary largeness criterion for the representations in question.
The notion of an absorbing representation --- recalled after \autoref{def:absorption} for \cstar-dynamics --- is inspired by the conclusion of Voiculescu's fundamental theorem \cite{Voiculescu76}.
Its assertion is equivalent to saying that every unitizably full\footnote{In this specific context, this becomes one of two more familiar statements. If the \cstar-algebra is non-unital, then the representation needs to be faithful and essential. If the \cstar-algebra is unital, then it also needs to have the property that the representation is supported on a subspace with infinite-dimensional orthogonal complement.} representation of a separable \cstar-algebra on the Hilbert space $\ell^2(\IN)$ is absorbing.
Absorbing representations always exist between separable \cstar-algebras \cite{Thomsen01} and it is known that all elements of a given $KK$-group can be represented by a Cuntz pair of absorbing representations.
When one is faced with applications to the classification of nuclear \cstar-algebras, one can detect such absorbing behavior via Elliott--Kucerovsky's ``pure largeness'' criterion \cite{ElliottKucerovsky01} and its known variants; see \cite[Subsection 4.2]{BouwenGabe25}.
For the purpose of this article, the primary application to have in mind in classification is the approach of Carrión et al \cite{CGSTW23}, where part of the technical work is about getting closer to a setup where one can compare a pair of naturally occurring absorbing representations via $KK$-theory.
The stable uniqueness theorem allows one to obtain a strong form of equivalence for the pair of representations, but only after first stabilizing with an auxiliary representation.
The process to ``destabilize'', so to speak, often required a substantial amount of extra work involving the specific assumptions related to the application.
Schafhauser demonstrated in \cite{Schafhauser20} how this issue can be circumvented under a UHF-stability assumption.
Building on this idea, it was shown in \cite[Theorem 5.15]{CGSTW23} that one can omit the stabilization with the auxiliary representation at the expense of adding a tensor copy of the Jiang--Su algebra $\CZ$ to the codomain.
Since said article concerns the classification of \cstar-algebras that are ultimately Jiang--Su stable, this extra tensor copy of $\CZ$ can be removed in a later phase of the application with some extra steps.
More recently, I strengthened said result with Farah in \cite[Theorem 5.5]{FarahSzabo24} by verifying that \autoref{theorem:A} holds under the extra assumption of Jiang--Su stability on $B$.
However, it remained a natural question to what extent it is generally necessary to stabilize with an auxiliary representation when one compares a pair of representations that are absorbing to begin with, to which \autoref{theorem:A} provides the answer.

If one broadens the scope of this investigation to equivariant $KK$-theory, there is an analog of Cuntz' picture involving pairs of \emph{cocycle representations} satisfying a similar condition as above; see also \cite[Section 6]{Szabo21cc}.
Similarly to the nonequivariant case, the Cuntz--Thomsen picture \cite{Thomsen98} for equivariant $KK$-theory has turned out to be very useful for the classification of \cstar-dynamics.
I have generalized the stable uniqueness theorem to the setting of \cstar-dynamics together with Gabe in \cite{GabeSzabo25}, which was one of the key conceptual ingredients behind our dynamical Kirchberg--Phillips theorem \cite{GabeSzabo24}.
%Further interesting applications of this theorem may be expected to come in the realm of \cstar-dynamics and their classification.
Its counterpart for actions of unitary tensor categories followed soon afterwards \cite{KitamuraNeaguPacheco24} and may become instrumental to classify unitary tensor category actions on Kirchberg algebras.

The discussions in the aforementioned references \cite{CGSTW23, SchafhauserTikuisisWhite25} identified the main technical obstacle towards the conclusion of \autoref{theorem:A} as the $K_1$-injectivity of the auxiliary \cstar-algebra $\CQ(B)\cap\bar{\phi}(A)'$,
which had been a problem considered beforehand in other related works \cite{Paschke81, GabeRuiz20, LoreauxNg20, LNS26} (here ``$\CQ$'' denotes the corona algebra and $\bar{\phi}$ is the induced homomorphism).
The first and main goal of this article is to prove \autoref{theorem:A}, by way of showing that the technical obstacle related to $K_1$-injectivity has a positive solution for arbitrary pairs of separable \cstar-algebras.
To describe the approach behind this, it is more useful to consider the \cstar-algebra $\SD_\phi\subseteq\CM(B)$ of all multipliers whose induced class belongs to $\CQ(B)\cap\bar{\phi}(A)'$; see \autoref{nota:D-phi}.
The main result of the second section is the following consequence of \autoref{thm:K1-injectivity} and \autoref{cor:Paschke-dual-K1-injective}.
The added layers of generality within \autoref{thm:K1-injectivity} include other recent results of this type like \cite[Theorems 2.5, 2.9]{LoreauxNg20} and \cite[Theorem 3.28]{LNS26} as special cases.

\begin{theoremi} \label{theorem:B}
Let $A$ and $B$ be separable \cstar-algebras with $B$ stable.
If $\phi: A\to\CM(B)$ is a (unitally) absorbing $*$-homomorphism, then $\SD_\phi$ and all its quotients are $K_1$-injective.
\end{theoremi}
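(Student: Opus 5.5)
The argument proceeds in three steps: a reduction to unitaries in the connected component of the identity, a structural decomposition of $\SD_\phi$ coming from absorption, and a rotation argument that realizes the required path. Recall $\SD_\phi=\{x\in\CM(B) : [x,\phi(a)]\in B \text{ for all } a\in A\}$. Let $D$ be $\SD_\phi$ or one of its quotients, and let $u\in\CU(D)$ with $[u]_1=0$. Then $u\oplus\eins_n$ lies in $\CU_0(M_{n+1}(D))$ for some $n$, so it suffices to construct an isomorphism $M_{n+1}(\SD_\phi)\cong\SD_\phi$, compatible with quotients, which turns $u\oplus\eins_n$ into something connected to $u$ within $\CU(D)$.

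\emph{Step 1: exploiting absorption.} Since $\phi$ is absorbing and $B$ is stable, $\phi\oplus\phi$ is unitarily equivalent to $\phi$ via a unitary $W\in\CM(B)$ (in the unital case via the unital variant of absorption). More precisely, there are isometries $s_1,s_2\in\CM(B)$ with $s_1s_1^*+s_2s_2^*=\eins$ and $s_i\phi(a)-\phi(a)s_i\in B$, and one can take them to commute with $\phi$ exactly. By iterating, one obtains a unital copy of $\CO_\infty$, or even of $\CO_2$, inside $\SD_\phi$, and in fact inside the relative commutant of $\phi(A)$ modulo $B$. These isometries exist, and are mapped to isometries in every quotient, so the following argument applies to every quotient $D$ of $\SD_\phi$ simultaneously.

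\emph{Step 2: the standard infinite-repeat trick.} With a sequence of isometries $t_k\in\SD_\phi$ with orthogonal ranges, all commuting with $\phi(A)$ modulo $B$, and $\sum_k t_kt_k^*$ strictly convergent to a projection, I would show the following. If $u\in\CU(D)$ satisfies $u\oplus\eins\sim_h \eins\oplus\eins$, then one can write $u$ as homotopic to $u\oplus\eins\oplus\eins\oplus\cdots$ via an Eilenberg swindle. This hinges on the infinite sum $\sum_k t_k x t_k^*$ (strict convergence) defining an element of $\SD_\phi$, which I would check: for $x\in\SD_\phi$, the commutator $[\sum_k t_kxt_k^*,\phi(a)]$ lies in $B$ provided the $t_k$ commute exactly with $\phi(A)$ and the sum of the commutators converges in norm. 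I would arrange the latter by replacing $\phi$ with the unitarily equivalent $\phi\otimes\eins_{\CK}$-type amplification $\phi^\infty$, using absorption again. In that model, $t_k$ can be chosen to be $\eins\otimes e_{k}$-type isometries commuting exactly with $\phi^\infty(A)$. The map $x\mapsto\sum_k t_kxt_k^*$ is then a unital endomorphism of $\SD_\phi$ descending to quotients. In the proof, I'd then realise $u\mapsto \sum t_k u t_k^*+\ldots$ with a rotation path $R_\theta$ in $M_2$ of these isometries connecting $u\oplus u^*$ to $\eins$ inside $\CU(D)$. Combining this with the swindle $u\sim u\oplus(u^*\oplus u)\oplus\cdots\sim(u\oplus u^*)\oplus\cdots\sim\eins$ requires the class of $u$ to be trivial only to rearrange finitely many terms.

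\emph{Main obstacle.} The delicate point is that the swindle naively proves that every unitary is null-homotopic, which is false. The issue is that $\sum_k t_k u t_k^*$ need not lie in $\SD_\phi$ for a general $u$, because the commutators $[u,\phi(a)]\in B$ do not sum to something in $B$. This is exactly where the hypothesis $[u]_1=0$ must enter. My plan here is to use $[u]_1=0$ to write $u\oplus\eins_n=\prod_j e^{ih_j}$ with $h_j$ self-adjoint in $M_{n+1}(D)$. I would then conjugate by the Cuntz isometries so that this factorization happens inside $D$ itself, giving $u\oplus\eins=\prod e^{ih_j}$ with $h_j\in D$ and $\eins$ living on the range of $s_2$. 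Finally, I would compress by $s_1$ and control the error $s_1^*e^{ih}s_1$ versus $e^{is_1^*hs_1}$ via quasicentrality, which is available in the separable setting using an approximate unit of $B$ quasicentral for $\phi(A)$ and $u$ (Kasparov's technical theorem). This compression-with-quasicentral-approximate-unit argument, carried out uniformly so that it passes to quotients, is where I expect the real work.
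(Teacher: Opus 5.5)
There is a genuine gap: the proposal never supplies the step that actually constitutes $K_1$-injectivity.

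\textbf{Step 2 fails.} Your endomorphism $x\mapsto\sum_k t_kxt_k^*$ does not preserve $\SD_\phi$, even in the infinite-repeat model. The commutator $[\sum_k t_kxt_k^*,\phi(a)]=\sum_k t_k[x,\phi(a)]t_k^*$ has norm $\|[x,\phi(a)]\|$ on every tail, so it lies in $B$ only when that commutator is zero. The swindle therefore only lives on the exact commutant $\CM(B)\cap\phi(A)'$, which is how Proposition~\ref{prop:infinite-repeat} gets vanishing $K$-theory there. You rightly retract it.

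\textbf{What remains restates the problem.} Transporting $u\oplus\eins_n\in\CU_0(M_{n+1}(D))$ into $D$ via Cuntz isometries is free. Deducing $u\in\CU_0(D)$ from $s_1us_1^*+(\eins-s_1s_1^*)=\prod_je^{ih_j}\in\CU_0(D)$ is exactly $K_1$-injectivity, which is open for general properly infinite algebras.

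\textbf{The proposed remedy has no mechanism.} $s_1^*e^{ih_j}s_1$ is close to the unitary $e^{is_1^*h_js_1}$ only if $s_1s_1^*$ nearly commutes with $h_j$ in norm. But the $h_j$ are arbitrary elements of $D$, produced after (and depending on) $s_1$. Moreover, an approximate unit of $B$ used directly consists of elements that are zero in $\SD_\phi/B=\CQ(B)\cap\bar{\phi}(A)'$, which is the case of interest. It also has no relation to an arbitrary ideal $\CJ$, so it cannot give norm estimates in $D$.

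\textbf{The missing idea is to switch from unitaries to projections.}
\begin{enumerate}[leftmargin=*,label=\textup{(\arabic*)}]
\item Absorption only gives isometries commuting with $\phi$ modulo $B$. The paper therefore first passes to $\phi^\infty$ via $\SD_\phi\cong\SD_{\phi^\infty}$ (Remark~\ref{rem:D-infinite-repeat}), which gives isometries $r_n\in\CM(B)\cap\phi(A)'$ with $\sum_nr_nr_n^*=\eins$.
\item By the Blanchard--Rohde--R{\o}rdam criterion (Lemma~\ref{lem:K1-inj-criterion}), it suffices to show that the fixed splitting projection $p=\pi_\CJ(\sum_nr_{2n}r_{2n}^*)$ is homotopic to every splitting projection $q=w(\eins-p)w^*$ of vanishing $K_0$-class.
\item By Cuntz--Higson (Lemma~\ref{lem:Cuntz-Higson}), it suffices to find isometries $s_1,s_2$ with $s_1s_1^*\leq p$, $s_2s_2^*\leq q$ and $\|s_1^*s_2\|<1$.
\item These are $R_1=\sum_kr_{m_k}a_k$ and $R_2=W\sum_kr_{n_k}b_k$, where $a_k,b_k$ are square roots of differences of a quasicentral approximate unit $(e_n)$ with $e_{n+1}e_n=e_n$. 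The decay $\|[a_k,\phi(a)]\|\to0$ is precisely what makes these strict sums lie in $\SD_\phi$, the very point that broke your swindle.
\item Choosing the indices $m_1<n_1<m_2<\dots$ far apart, using $r_nr_n^*\to0$ strictly, gives $\|\FF\FH\|<1$ for the dominating elements $\FF\leq P$ and $\FH\leq Q$.
\end{enumerate}
Everything is built in $\SD_\phi$ and only pushed to $\SD_\phi/\CJ$ at the end, using merely that $W$ lifts a unitary modulo $\CJ$. That is why all quotients are covered at once.
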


%Combined with the detailed discussions appearing after \cite[Question 5.17]{CGSTW23} and \cite[Problem LXII]{SchafhauserTikuisisWhite25}, this implies a positive answer to \autoref{question:A}.
%The rest of the article includes a self-contained and more direct proof of the $KK$-uniqueness theorem based on \autoref{theorem:B}.

Before we move on to the remaining content of the article, I shall say a few words about the ideas leading to \autoref{theorem:B}.
The core argument behind the proof draws heavy inspiration from the proof of Cuntz--Higson's theorem \cite{CuntzHigson87} that the unitary group of every stable multiplier algebra $\CM(B)=\CM(B\otimes\CK)$ (recal that $B$ is assumed stable) is connected in the norm topology.
If one analyzes said proof carefully, one can interpret it as a two-step approach:
The first (and relatively easy) step is to observe that the $K$-groups of $\CM(B)$ vanish, which is a consequence of the fact that it possesses an \emph{infinite repeat} endomorphism, which automatically trivializes the identity map in $K$-theory.
The second (and more difficult) step is to show that $\CM(B)$ is $K_1$-injective.
Since it is properly infinite as a unital \cstar-algebra, there are known criteria to verify $K_1$-injectivity (these are nicely summarized in \cite{BlanchardRohdeRordam08}), one of which was the original criterion verified by Cuntz--Higson.
The proof of \autoref{theorem:B} (say for the $K_1$-injectivity of the \cstar-algebra $\SD_\phi$ itself) can be seen to mimic Cuntz--Higson's argument, with the added point that we need to keep track of the relative commutant condition appearing in the definition of $\SD_\phi$.
Fortunately, this works in remarkable generality as long as $\phi$ is sufficiently similar to an infinite repeat representation, which is automatically the case when $\phi$ is absorbing.
With this requisite in hand, the main difference between Cuntz--Higson's original argument and the one appearing here is that instead of utilizing arbitrary increasing approximate units in $B$, we utilize approximate units that are quasicentral relative to $\phi(A)$.
Since the proof involves no further assumptions, the conclusion about $K_1$-injectivity can be drawn in a much more flexible situation (described in \autoref{thm:K1-injectivity}), which is applicable to other auxiliary \cstar-algebras that play an analogous role for other types of $KK$-theory such as nuclear, ideal-related, or equivariant $KK$-theory.
The argument does however very much depend on the fact that the \cstar-algebra in question comes from a stable multiplier algebra.
Hence I would like to mention, for those readers wondering, that the proof behind \autoref{theorem:B} likely sheds no light on the general $K_1$-injectivity problem for properly infinite \cstar-algebras; see \cite[Question 2.9]{BlanchardRohdeRordam08} and \cite[Problem LXI]{SchafhauserTikuisisWhite25}.

In the rest of the article, I aim to treat the framework concerning the $KK$-uniqueness theorem in the most general setting possible for \cstar-algebras or \cstar-dynamics.
Having in mind other known variants of the stable uniqueness theorem and their applications, it can be just as useful to look at $KK$-groups with extra structure, for instance Skandalis' nuclear $KK$-theory \cite{Skandalis88} or Kirchberg's ideal-related $KK$-theory \cite{KirchbergC}; see \cite[Section 13]{Gabe24} for a unified treatment.
It stands to reason that in the long term, such $KK$-groups with enriched structure will also play a similar role in the structure and classification of \cstar-dynamics.
The third section introduces an umbrella framework of sorts, which to my knowledge unifies all known variants of (equivariant) $KK$-theory that have received significant attention in the literature.
Given two separable $G$-\cstar-algebras $(A,\alpha)$ and $(B,\beta)$ with $\beta$ strongly stable, the needed input data consists of some set $\FC$ of cocycle representations $(\phi,\Iu): (A,\alpha)\to (\CM(B),\beta)$ containing all elements of the form $(0,\Ix)$, and which is downward closed under \emph{weak containment} (\autoref{def:wc}).
A set $\FC$ satisfying these abstract properties is referred to as an \emph{eligible set}.
The modified $KK$-group $KK^G(\FC;\alpha,\beta)$ is then built, in analogy to \cite{Thomsen98}, from Cuntz pairs of cocycle representations in $\FC$, with a homotopy relation that requires the continuous paths to consist of cocycle representations in $\FC$ (called $\FC$-homotopy).
As is discussed further in \autoref{ex:generalized-KK-examples}, this general construction has as special cases nuclear and ideal-related $KK$-theory.
In the third section it is shown that one of Kasparov's fundamental observations extends to this variant of $KK$-theory, namely that $\FC$-homotopy is equivalent to operator homotopy after stabilizing with a representation in $\FC$.
This generalizes the results in \cite[Section 5]{GabeSzabo25} and is based on just slightly adjusting the approach therein to leverage the properties of an eligible set.

As a consequence of results in \cite{GabeSzabo25}, as long as the involved \cstar-algebras are separable, every eligible set has some absorbing element, i.e., a representation $(\phi,\Iu)\in\FC$ that absorbs every representation in $\FC$.
Just as before, it can be shown that every element of $KK^G(\FC;\alpha,\beta)$ has a representative Cuntz pair consisting of two absorbing elements in $\FC$.
In the fourth section, the results obtained so far are combined with a few further analytic arguments to prove the appropriate $KK$-uniqueness theorem for these generalized $KK$-groups.
The following is the separable version of \autoref{thm:KK-uniqueness}.

\begin{theoremi} \label{theorem:C}
Let $G$ be a second-countable locally compact group.
Let $\alpha: G\curvearrowright A$ and $\beta: G\curvearrowright B$ be two actions on separable \cstar-algebras, and suppose that $\beta$ strongly stable.
Let $\FC$ be an eligible set of cocycle representations $(A,\alpha)\to (\CM(B),\beta)$.
Let $(\phi,\Iu)$ and $(\psi,\Iv)$ be two absorbing elements of $\FC$ that form an anchored $(\FC;\alpha,\beta)$-Cuntz pair.
Then $[(\phi,\Iu),(\psi,\Iv)]=0$ in $KK^G(\FC;\alpha,\beta)$ if and only if $(\phi,\Iu)$ and $(\psi,\Iv)$ are strongly asymptotically unitarily equivalent.
\end{theoremi}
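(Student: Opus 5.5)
The plan is to prove the two directions separately; almost all the work sits in ``$\Rightarrow$'', which I would reduce to the $K_1$-injectivity statement of \autoref{theorem:B} in its general form (\autoref{thm:K1-injectivity}), applied to the analogue of $\SD_\phi$ for cocycle representations in $\FC$.

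\emph{Easy direction.} Suppose $(\phi,\Iu)$ and $(\psi,\Iv)$ are strongly asymptotically unitarily equivalent via a path $v:[0,\infty)\to\CU(\eins+B)$ with $v_0=\eins$. Reparametrize $[0,\infty]\cong[0,1]$ and set $\psi_t=\ad(v_t)\circ\phi$, with cocycle $v_t\Iu_g\beta_g(v_t)^*$, and endpoint $(\psi,\Iv)$. Each element of this path is unitarily equivalent to $(\phi,\Iu)$ via a unitary in $\eins+B$. Weak containment is invariant under such equivalence, so the path stays in $\FC$ because $\FC$ is downward closed. Since $v_t-\eins\in B$, the pairs $((\phi,\Iu),(\psi_t,\Iv_t))$ form a continuous family of anchored Cuntz pairs, i.e.\ an $\FC$-homotopy. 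At $t=0$ this is the degenerate pair $((\phi,\Iu),(\phi,\Iu))$, so $[(\phi,\Iu),(\psi,\Iv)]=0$.

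\emph{Hard direction, step 1: from $\FC$-homotopy to a unitary in the Paschke dual.} Assume the class vanishes. By the results of the third section, $\FC$-homotopy agrees with operator homotopy after adding an element of $\FC$. Since both $(\phi,\Iu)$ and $(\psi,\Iv)$ are absorbing, that added summand can be absorbed up to unitaries in $\eins+B$, in the strong asymptotic sense. In the anchored setting the Cuntz pair is then unitarily equivalent to a pair of the form $((\phi,\Iu),(\ad(w)\circ\phi,\,w\Iu_\bullet\beta_\bullet(w)^*))$. Here $w$ is a unitary in $\SD_{(\phi,\Iu)}$: it commutes with $\phi(A)$ modulo $B$ and is approximately $\beta$-invariant modulo $B$ relative to $\Iu$. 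Its class in $K_1(\SD_{(\phi,\Iu)}/\ldots)$ is identified with the $KK$-class, and this class vanishes. More precisely, the operator homotopy gives a path of such unitaries, which lets me arrange that $[w]=0$ in $K_1$ of the relevant quotient $\SD_{(\phi,\Iu)}/\SD_{(\phi,\Iu)}\cap(\text{annihilator-type ideal})$. The algebra $\SD_{(\phi,\Iu)}$ is the one for which the paper records the vanishing of $K_*$ via the infinite repeat trick.

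\emph{Step 2: connecting $w$ to $\eins$ and producing the path.} $K_1$-injectivity of that quotient upgrades $[w]=0$ to $w$ lying in the connected component of $\eins$ in the quotient. Lift the connecting path to a norm-continuous path $w_t\in\SD_{(\phi,\Iu)}$ with $w_0=\eins$ and $w_1=w$, at first only modulo the ideal. Then take a quasicentral approximate unit $(e_t)$ of $B$, relative to $\phi(A)$, $\Iu$ and the $\beta$-action, which is available since $G$ is second-countable and everything is separable. Use it to turn commutation modulo $B$ into asymptotic exact conjugation: concatenate pieces of the form $(\eins-e_t)+e_t^{1/2}w_{s(t)}e_t^{1/2}$, suitably corrected to unitaries in $\eins+B$. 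This produces $v:[0,\infty)\to\CU(\eins+B)$ with $v_0=\eins$ implementing $\psi=\lim_t\ad(v_t)\circ\phi$ together with the cocycle condition uniformly on compact subsets of $G$.

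I expect the main obstacle to be step 1: making the identification of $[(\phi,\Iu),(\psi,\Iv)]$ with a $K_1$-class precise, and doing the destabilization with the correct quotient of $\SD_{(\phi,\Iu)}$, so that the vanishing $KK$-class gives a trivial $K_1$-class exactly where \autoref{thm:K1-injectivity} applies. I would first treat the non-equivariant separable case, where this is the Paschke-duality isomorphism $KK(A,B)\cong K_1(\SD_\phi/\ldots)$ for absorbing $\phi$. I would then check that each step goes through with cocycles by replacing $A$-commutation with the combined $\phi(A)$/$\Iu$-relative commutant.
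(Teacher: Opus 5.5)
Your easy direction is correct. Your overall skeleton is also the paper's: a unitary in the Paschke dual algebra, then $K_1$-injectivity, then converting a path in $\CU(\SD_{(\phi,\Iu)})$ starting at $\eins$ into a path in $\CU(\eins+B)$. The gap is in Step~1, exactly where you expect it, and the mechanism you sketch there is not correct.

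Absorption only gives $\asymp$, implemented by unitaries in $\CM(B)$. Saying the added summand ``can be absorbed up to unitaries in $\eins+B$, in the strong asymptotic sense'' assumes the conclusion of the theorem.

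Moreover, $\SD_{(\phi,\Iu)}$ does \emph{not} have vanishing $K$-theory. By \autoref{thm:Paschke-1}, $K_1(\SD_{(\phi,\Iu)})\cong \IE^G(\FC;\alpha,\beta)/{\sim_{h,\FC}}$. If it vanished, $K_1$-injectivity would make every anchored pair of absorbing elements strongly asymptotically unitarily equivalent, regardless of its class.

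What \autoref{prop:infinite-repeat} kills is the $K$-theory of the \emph{exact} fixed-point commutant $(\CM(B)\cap\phi(A)')^{\beta^\Iu}$ of an infinite repeat. That is what the destabilization uses. Suppose the pair $(\ad(U)\circ(\phi,\Iu),(\phi,\Iu))$ is $\FC$-homotopic to a degenerate pair. Then \autoref{lem:KKarantine} yields $(\kappa,\Ix)\in\FC$, enlarged so that $(\phi,\Iu)\oplus(\kappa,\Ix)$ is an infinite repeat, and a unitary $W\in\CU_0(\SD_{(\phi,\Iu)\oplus(\kappa,\Ix)})$ with $W(U\oplus\eins)$ in that exact commutant. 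Hence $[U\oplus\eins]=0$. By \autoref{lem:weak-absorption-K-theory}, weak absorption makes $x\mapsto x\oplus 0$ a $K_1$-isomorphism, so $[U]=0$ in $K_1(\SD_{(\phi,\Iu)})$.

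The unitary to which this applies comes from absorption. Since both elements are absorbing, $(\phi,\Iu)\asymp(\phi,\Iu)\oplus(\psi,\Iv)\asymp(\psi,\Iv)$. This gives $U:[1,\infty)\to\CU(\CM(B))$ with $\ad(U_t)\circ(\psi,\Iv)\to(\phi,\Iu)$ and all defects in $B$. Because $\phi\equiv\psi$ and $\Iu\equiv\Iv$ modulo $B$, this forces $U_t\in\SD_{(\phi,\Iu)}$. The path itself then shows $[(\phi,\Iu),(\psi,\Iv)]=[\ad(U_1)\circ(\psi,\Iv),(\psi,\Iv)]$.

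You should also not pass to a quotient. Because the pair is anchored, its class vanishes already in $\IE^G(\FC;\alpha,\beta)/{\sim_{h,\FC}}$. So $[U_1]=0$ in $K_1(\SD_{(\phi,\Iu)})$ itself, and \autoref{thm:K1-injectivity} with $\CJ=0$ gives $U_1\in\CU_0(\SD_{(\phi,\Iu)})$.

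If you work in $\SD_{(\phi,\Iu)}/\CJ$ instead, with $\CJ=\SC_{(\phi,\Iu)}$ (your ``annihilator-type'' ideal) or $\CJ=B$, the lifted path only ends at $U_1u$ for some unitary $u\in\eins+\CJ$. This is not an error term a quasicentral cut-down can remove. A priori $[u]$ can be a nonzero element of $K_1(\SC_{(\phi,\Iu)})\cong H_\beta$ (\autoref{thm:Paschke-2}). In that case the pair $(\ad(u)\circ(\phi,\Iu),(\phi,\Iu))$ is not $\FC$-homotopic to a degenerate pair, hence not strongly asymptotically unitarily equivalent. Ruling this out amounts to knowing $[U_1]=0$ in $K_1(\SD_{(\phi,\Iu)})$, at which point the quotient is a detour.

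Finally, $\psi$ is not $\ad(w)\circ\phi$ for a single unitary $w$, so a path from $\eins$ to $w$ cannot yield asymptotic conjugation. The path to feed into the last step is the connecting path from $\eins$ to $U_1$ in $\CU(\SD_{(\phi,\Iu)})$, followed by the absorption tail $(U_t)_{t\geq 1}$. \autoref{lem:saue} converts this into a path in $\CU(\eins+B)$ starting at $\eins$. Your approximate-unit construction is a reasonable way to reprove that lemma.
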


Upon applying this umbrella theorem to more familiar cases, we deduce Corollaries \ref{cor:dynamical-KK-uniqueness}, \ref{cor:nuclear-KK-uniqueness} and \ref{cor:ideal-related-KK-uniqueness}, which are variants of \autoref{theorem:A} for equivariant $KK$-theory as well as nuclear and/or ideal-related $KK$-theory.

The proof of \autoref{theorem:C} is very much entwined with a Paschke-type duality, which is more rigorously explored in the fifth section.
A variant of the known Paschke duality \cite[Theorem 3.2]{Thomsen01} asserts that if $\phi: A\to\CM(B)$ is an absorbing representation, then $KK(A,B)$ is naturally isomorphic to $K_1(\SD_\phi)$.
The auxiliary \cstar-algebra $\SD_\phi$ is hence sometimes referred to as the ``Paschke dual algebra'' of the pair $(A,B)$.
A dynamical version of this result was proved in \cite[Theorem 6.2]{Thomsen05} in the language of genuinely equivariant representations under a stronger stability assumption on $\beta$.
Thomsen's work inspires the approach given here and served as my original motivation to develop his ideas in the framework of cocycle representations.
We explore a generalization of this Paschke duality phenomenon, which comes with an extra layer of subtlety in how to encode the dynamics.
In the case of \cstar-dynamics, if one is given a suitable cocycle representation $(\phi,\Iu): (A,\alpha)\to(\CM(B),\beta)$, then there appear to be two distinct but equally natural generalizations of the \cstar-algebraic object $\SD_\phi$, which we denote by $\SD_{(\phi,\Iu)}$ and $\SD^w_{(\phi,\Iu)}$, respectively.
These are in general non-isomorphic, and indeed we can detect the conceptual difference between them by describing their $K$-theory.
As the main result of the fifth section, it is shown as part of \autoref{thm:Paschke-2} that if $\FC$ is an eligible set with an absorbing element $(\phi,\Iu)\in\FC$, then $K_1(\SD^w_{(\phi,\Iu)})$ is naturally isomorphic to $KK^G(\FC;\alpha,\beta)$.
The group $K_1(\SD_{(\phi,\Iu)})$, on the other hand, turns out to be isomorphic to the direct sum $KK^G(\FC;\alpha,\beta)\oplus H_\beta$, where $H_\beta$ is the group of homotopy classes of cocycle pairs for $\beta$, which contains no information related to the action $\alpha$ but is often nontrivial when $G\neq\{1\}$.

As an outlook for future applications of the main results, Hua--White's recent article \cite{HuaWhite26} serves as an excellent template.
Therein, the authors prove a special case of the $KK$-uniqueness theorem, tailored to their intended application to classify certain nuclear embeddings of exact \cstar-algebras into ultrapowers of matrix algebras.
Their setup did not match the assumptions required for the classification machinery in \cite{CGSTW23}, but their work introduces a streamlined framework to extend the scope of such classification results if one has access to the general $KK$-uniqueness theorem.
I thus expect that \autoref{theorem:A} and its variants will be used in future works to obtain a more general classification of nuclear embeddings of exact \cstar-algebras into ultraproducts of \cstar-algebras that need not be Jiang--Su stable.
This expectation appears to be supported by the discussion in \cite[1.3.6]{CGSTW23}.
Similar consequences can be predicted within the classification theory of \cstar-dynamics.

%%%%%%%%%%%%%%%%%%%%%%%%%%%%%%%%%%%%%%%%%%%%%%%%%%%%%

\section{Preliminaries}

\begin{nota} \label{basic-notation}
Throughout, $G$ will denote a second-countable, locally compact group unless specified otherwise.
Normal capital letters like $A, B, C$ denote \cstar-algebras that are frequently assumed to be separable.
The multiplier algebra of $A$ is denoted as $\CM(A)$.
%, whereas $A^\dagger$ denotes the proper unitization of $A$, i.e., one adds a new unit even if $A$ was unital.
%We sometimes denote the closed unital ball of $A$ by $A_{\leq 1}$.
We write $\CU(\eins+A)$ for the set of all unitaries in the proper unitization $A^\dagger$ whose scalar part is $\eins$, which can be canonically identified with the unitary group of $A$ if it was already unital.
If we are given a (closed two-sided) ideal $J\triangleleft A$ and two elements $a,b\in A$, then the expression $a\equiv_J b$ is short-hand for $a-b\in J$.
The symbol $\CK$ denotes the \cstar-algebra of compact operators on a separable infinite-dimensional Hilbert space $\CH$, and we write $\CK(\CH)$ when a specific description of $\CH$ is relevant in a given context.
A \cstar-algebra $A$ is called \emph{stable} when $A\cong A\otimes\CK$.
Greek letters such as $\alpha,\beta,\gamma$ are used for point-norm continuous maps $G\to\Aut(A)$, most often group actions, which uniquely induces maps into $\Aut(\CM(A))$ as well.
Depending on the situation, we may denote $\id_A$ either for the identity map on $A$ or the trivial $G$-action on $A$.
We denote by $A^\alpha$ or $\CM(A)^\alpha$ the \cstar-subalgebra of fixed points (in $A$ or $\CM(A)$) with respect to an action $\alpha$.
Normal alphabetical letters such as $u,v,U,V$ are often used for unitary elements in some \cstar-algebra $A$.
If either $u\in\CU(\CM(A))$ or $u\in\CU(\eins+A)$, we denote by $\ad(u)$ the induced inner automorphism of $A$ given by $a\mapsto uau^*$.
Double-struck letters such as $\Iu, \Iv, \IU, \IV$ are used for strictly continuous maps $G\to\CU(\CM(A))$, most often \emph{cocycles} with respect to an action $\alpha: G\curvearrowright A$, i.e., one has the cocycle identity $\Iu_{gh}=\Iu_g\alpha_g(\Iu_h)$ for all $g,h\in G$.
Under this assumption, one obtains a new (cocycle perturbed) action $\alpha^\Iu: G\curvearrowright A$ via $\alpha^\Iu_g=\ad(\Iu_g)\circ\alpha_g$.

Although we will introduce equivariant $KK$-theory in this section via the Cuntz--Thomsen picture \cite{Cuntz83, Cuntz84, Higson87, Thomsen98}, we will implicitly assume that the reader has some existing passing familiarity with it; see for example \cite{BlaKK}.
We will assume that the reader is at least fluent in the language of $K$-theory for \cstar-algebras.
For the readers who are mainly interested in the results pertaining to $KK$-theory for \cstar-algebras (i.e., nonequivariant $KK$-theory), I wish to point out that one can choose to read the entire article by substituting $G=\{1\}$ and simply skip all the steps that involve the group actions.
\end{nota}

We will make use of the following fundamental result due to Kasparov concerning quasicentral approximate units coming from ideals invariant under a group action.

\begin{lemma}[see {\cite[Lemma 1.4]{Kasparov88}} and its proof] \label{lem:Kasparov}
Let $\beta: G\curvearrowright B$ be an action on a $\sigma$-unital \cstar-algebra and let $e_n\in B$ be any countable increasing approximate unit.
Then for any separable \cstar-subalgebra $D\subseteq\CM(B)$, there exists a countable increasing approximate unit of positive contractions $h_n\in B$ belonging to the convex hull of $\{e_n\mid n\geq 1\}$
satisfying 
\[
\lim_{n\to\infty} \|[h_n,d]\|=0 \quad\text{and}\quad \lim_{n\to\infty} \max_{g\in K} \| h_n-\beta_g(h_n) \| = 0
\] 
for all $d\in D$ and compact sets $K\subseteq G$.
If one has $e_{n+1}e_n=e_n$ for all $n\geq 1$, then one can arrange $h_{n+1}h_n=h_n$ for all $n\geq 1$.
\end{lemma}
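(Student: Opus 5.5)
The plan is to follow Kasparov's convex-combination argument: first produce quasicentrality with respect to $D$ and approximate $\beta$-invariance in a weak topology, and then upgrade to norm convergence by the Hahn--Banach theorem. Since $D$ is separable, fix a dense sequence $d_1,d_2,\dots$ in $D$. Since $G$ is second-countable and locally compact, fix an increasing exhausting sequence of compact sets $K_1\subseteq K_2\subseteq\dots$ such that every compact set lies in some $K_m$. It then suffices to find, for every $m$ and every $N$, an element $h$ in the convex hull of $\{e_n\mid n\geq N\}$ with $\|[h,d_i]\|<1/m$ for $i\leq m$ and $\max_{g\in K_m}\|h-\beta_g(h)\|<1/m$. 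A diagonal choice of such elements $h_m$, taken from tails $\{e_n\mid n\ge N_m\}$ with $N_m\to\infty$, is then an approximate unit, because convex combinations of the tail of an increasing approximate unit still form an approximate unit.

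For the weak step, consider the Banach space $X=\bigoplus_{i\leq m} B \oplus C(K_m,B)$ and the map $T(b)=\big(([b,d_i])_{i\le m},\,(g\mapsto b-\beta_g(b))\big)$. The key claim is that $T(e_n)\to0$ weakly in $X$. For the commutator components this is standard: for any state, or more generally any functional $f$ on $B$, one has $f(e_nd-de_n)\to f(d)-f(d)$ in the strict sense, since $e_n d\to d$ and $de_n\to d$ strictly and functionals on $B$ extend strictly continuously to $\CM(B)$. For the $C(K_m,B)$ component, functionals are $B^*$-valued regular measures $\mu$ on $K_m$. For each fixed $g$, $e_n-\beta_g(e_n)\to0$ weakly, because $\beta_g(e_n)$ is again an approximate unit, so $f(e_n)$ and $f(\beta_g(e_n))$ have the same limit behaviour: write $f=f'\cdot c$ with $c\in B$ by Cohen factorization. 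The terms are uniformly bounded by $2$, so dominated convergence gives $\int f_g(e_n-\beta_g(e_n))\,d|\mu|(g)\to0$. Hence $0$ lies in the weak closure of $\{T(e_n)\mid n\geq N\}$, and therefore, by Mazur/Hahn--Banach, in the norm closure of $T(\mathrm{conv}\{e_n\mid n\ge N\})$. This produces the desired $h$.

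The main obstacle I expect is the last clause, arranging $h_{n+1}h_n=h_n$. I would handle it by choosing the convex combinations with disjoint, increasing supports: take $h_m=\sum_{k\in F_m}\lambda_k e_k$ with finite sets $F_m$ such that $\max F_m<\min F_{m+1}$. Under the hypothesis $e_{k'}e_k=e_k$ for all $k'>k$, every $e_{k'}$ with $k'\in F_{m+1}$ acts as a unit on each $e_k$ with $k\in F_m$. Since the coefficients of $h_{m+1}$ sum to $1$, this gives $h_{m+1}h_m=h_m$. This works because at each stage we may restrict to tails beginning after $\max F_m$, which is exactly what the tail version of the weak argument allows. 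Positivity, contractivity and monotonicity of $h_m$ follow from convexity together with the relation $h_{m+1}h_m=h_m$. In the general case, monotonicity can be obtained either by passing to the functional-calculus approach in Kasparov's proof or simply by not insisting on it beyond what the convex hull provides; I would follow \cite[Lemma 1.4]{Kasparov88} for this detail.
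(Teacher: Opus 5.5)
Your proof is correct, and it follows the same route as the proof of \cite[Lemma 1.4]{Kasparov88}, which the paper cites in place of an argument of its own. The steps are: weak convergence of $T(e_n)$ to $0$ via the dual of $C(K_m,B)$, Mazur's theorem to pass to convex combinations drawn from tails, a diagonal choice, and disjoint increasing supports to obtain $h_{m+1}h_m=h_m$.

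The one thing you leave unfinished is monotonicity in the general case, and your own construction already settles it. If $\max F_m<\min F_{m+1}$ and the coefficients of each $h_m$ sum to $1$, then $h_m\le e_{\max F_m}\le e_{\min F_{m+1}}\le h_{m+1}$. So there is no need to invoke functional calculus, which would in any case take you out of the convex hull of the $e_n$ that the statement requires.

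A small technical remark on the weak step. Writing functionals on $C(K_m,B)$ as $\int f_g(\cdot)\,d|\mu|(g)$ with a pointwise density $g\mapsto f_g$ requires a weak$^*$-measurable Radon--Nikodym theorem, since $B^*$ need not have the Radon--Nikodym property. You can sidestep this. Embed $C(K_m,B)$ isometrically into $C\big(K_m\times \mathrm{Ball}(B^*)\big)$, with the weak$^*$ topology on the ball, via $F\mapsto\big((g,f)\mapsto f(F(g))\big)$. Then extend by Hahn--Banach and apply the Riesz representation theorem. The integrands $f(e_n-\beta_g(e_n))$ are bounded by $2$ and tend to $0$ pointwise on $K_m\times\mathrm{Ball}(B^*)$, so dominated convergence applies directly.
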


\subsection{Some cocycle representation theory}

\begin{defi}[see {\cite[Section 1]{Szabo21cc}}] \label{def:cocycle-morphism}
Let $\alpha: G\curvearrowright A$ and $\beta: G\curvearrowright B$ be two actions on \cstar-algebras.
A \emph{cocycle representation} $(\phi,\Iu): (A,\alpha)\to (\CM(B),\beta)$ consists of a $*$-homomorphism $\phi: A\to\CM(B)$ and a strictly continuous $\beta$-cocycle $\Iu: G\to\CU(\CM(B))$ satisfying $\ad(\Iu_g)\circ\beta_g\circ\phi=\phi\circ\alpha_g$ for all $g\in G$.
For any unitary $W\in\CU(\CM(B))$, we can obtain a new cocycle representation defined by conjugation as 
\[
\ad(W)\circ(\phi,\Iu):=(\ad(W)\circ\phi, \{ W\Iu_g\beta_g(W)^*\}_{g\in G} ).
\]
If a given cocycle representation can be obtained from another via this procedure, then they are called \emph{unitarily equivalent}.
If there exists a cocycle representation $(\phi,\Iu)$ such that $\phi$ is an isomorphism from $A$ to $B$, then $\alpha$ and $\beta$ are called \emph{cocycle conjugate}.
\end{defi}

\begin{nota} \label{nota:strongly-stable}
We say that an action $\beta: G\curvearrowright B$ is \emph{strongly stable} if $(B,\beta)$ is  conjugate (i.e., equivariantly isomorphic) to $(B\otimes\CK,\beta\otimes\id_\CK)$.
\end{nota}

An action $\beta: G\curvearrowright B$ is strongly stable if and only if there is a sequence of isometries $r_n\in\CM(B)^\beta$ such that $\eins=\sum_{n=1}^\infty r_nr_n^*$ holds in the strict topology; see \cite[Remark 2.4]{GabeSzabo25}.
Furthermore, it was observed in \cite[Proposition 1.4]{GabeSzabo24} that every $G$-action on a stable \cstar-algebra is cocycle conjugate to a strongly stable action.

\begin{defi}[cf.\ {\cite[Lemma 3.4]{Thomsen98}}]
Let $\beta: G\curvearrowright B$ be an action on a \cstar-algebra.
Suppose that there exists a unital inclusion $\CO_2\subseteq\CM(B)^\beta$.
For two isometries $t_1,t_2\in\CM(B)^\beta$ with $t_1t_1^*+t_2t_2^*=\eins$, we may consider the $\beta$-equivariant $*$-homomorphism
\[
\CM(B)\oplus\CM(B)\to\CM(B),\quad b_1\oplus b_2\mapsto b_1\oplus_{t_1,t_2} b_2 := t_1b_1t_1^*+t_2b_2t_2^*.
\]
Up to unitary equivalence with a unitary in $\CM(B)^\beta$, this $*$-homomorphism does not depend on the choice of $t_1$ and $t_2$.\footnote{If $v_1, v_2\in\CM(B)^\beta$ are two other isometries with $v_1v_1^*+v_2v_2^*=\eins$, then the unitary equivalence between ``$\oplus_{t_1,t_2}$'' and ``$\oplus_{v_1,v_2}$'' is implemented by $w=t_1v_1^*+t_2v_2^*\in\CM(B)^\beta$.}
One refers to the element $b_1\oplus_{t_1,t_2} b_2$ as the \emph{Cuntz sum} of the two elements $b_1$ and $b_2$ (with respect to $t_1$ and $t_2$).
Now let $\alpha: G\curvearrowright A$ be another action on a \cstar-algebra, and $(\phi,\Iu), (\psi,\Iv): (A,\alpha)\to (\CM(B),\beta)$ two cocycle representations.
We likewise define the (pointwise) \emph{Cuntz sum}
\[
(\phi,\Iu)\oplus_{t_1,t_2} (\psi,\Iv) = (\phi\oplus_{t_1,t_2}\psi, \Iu\oplus_{t_1,t_2}\Iv): (A,\alpha)\to (\CM(B),\beta),
\]
which is easily seen to be another cocycle representation.
Since its unitary equivalence class does not depend on the choice of $t_1$ and $t_2$, we will often omit $t_1$ and $t_2$ from the notation if it is clear from context that a given statement is invariant under unitary equivalence.
\end{defi}

Similarly to how we formed Cuntz sums of two elements, we may also form countably infinite sums by a similar method if the underlying action is strongly stable.

\begin{convention} \label{standing-assumptions}
We shall, from this point and throughout the article, always assume that $\beta: G\curvearrowright B$ denotes a strongly stable action on a $\sigma$-unital \cstar-algebra and that $\alpha: G\curvearrowright A$ is an action on a separable \cstar-algebra, unless specified otherwise.
As we set out in \autoref{basic-notation}, $G$ is any second-countable locally compact group.
We shall only repeat these assumptions sporadically and when stating the main results.
\end{convention}

\begin{defi} \label{def:countable-sums}
%Suppose that $\beta$ is strongly stable.
Let $t_n\in \CM(B)^\beta$ be any sequence of isometries such that $\sum_{n=1}^\infty t_nt_n^* = \eins$ in the strict topology.
Then we have a $\beta$-equivariant $*$-homomorphism
\[
\ell^\infty(\IN, \CM(B)) \to \CM(B),\quad (b_n)_{n\geq 1} \mapsto \sum_{n=1}^\infty t_nb_nt_n^*,
\]
which does not depend on the choice of $t_n$ up to unitary equivalence with a unitary in $\CM(B)^\beta$.\footnote{Similarly as before, if $v_n\in\CM(B)^\beta$ is another sequence of isometries satisfying the same relation, then the unitary $w=\sum_{n=1}^\infty t_nv_n^*$ implements this equivalence.}
For any sequence of cocycle representations $(\phi^{(n)},\Iu^{(n)}): (A,\alpha)\to (\CM(B),\beta)$, we may hence define the countable sum
\[
(\Phi,\IU)=\bigoplus_{n=1}^\infty (\phi^{(n)},\Iu^{(n)}): (A,\alpha)\to (\CM(B),\beta)
\]
via the pointwise strict limits
\[
\Phi(a)=\sum_{n=1}^\infty t_n\phi^{(n)}(a)t_n^*,\quad \IU_g = \sum_{n=1}^\infty t_n\Iu^{(n)}_g t_n^*.
\]
Up to equivalence with a unitary in $\CM(B)^\beta$, this cocycle representation does not depend on the choice of $(t_n)_n$.
In particular, in the special case that $(\phi^{(n)},\Iu^{(n)})=(\phi,\Iu)$ for all $n$, we denote the resulting countable sum by $(\phi^\infty, \Iu^\infty)$ or $(\phi, \Iu)^\infty$ and call it the \emph{infinite repeat} of $(\phi,\Iu)$.
\end{defi}

The last sentence of the following result and its justification follows the same line of reasoning as \cite[Lemma 2.3]{Cuntz83} and \cite[Lemma 6.1]{Thomsen05}.

\begin{prop} \label{prop:infinite-repeat}
Let $(\phi,\Iu): (A,\alpha)\to (\CM(B),\beta)$ be a cocycle representation.
Then $(\phi,\Iu)$ is the infinite repeat of some cocycle representation if and only if there exists a sequence of isometries $r_n\in (\CM(B)\cap\phi(A)'\cap\{\Iu_g\}_{g\in G}')^{\beta}$ with $\sum_{n=1}^\infty r_n r_n^*=\eins$ in the strict topology.
If this is the case, then the \cstar-algebra $(\CM(B)\cap\phi(A)')^{\beta^\Iu}$ has vanishing $K$-theory.
\end{prop}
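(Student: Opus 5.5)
Both directions of the equivalence are handled by constructing explicit unitaries from the isometries, and the $K$-theory statement follows by running the Cuntz-style repeat argument inside the relative commutant.

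\emph{The ``only if'' direction.} Suppose $(\phi,\Iu)=(\psi^\infty,\Iv^\infty)$ with respect to isometries $t_n\in\CM(B)^\beta$ satisfying $\sum_n t_nt_n^*=\eins$. Using $\beta$-invariant isometries $s_n$ with $\sum_n s_ns_n^*=\eins$, which exist by strong stability, we identify $\IN\cong\IN\times\IN$ via a bijection $\sigma$. We then set $r_k=\sum_{n} t_{\sigma(n,k)}t_n^*$. Each $r_k$ is an isometry in $\CM(B)^\beta$, the sum converges strictly, and $\sum_k r_kr_k^*=\eins$. Since all $t_n$ carry the same $\psi(a)$ and $\Iv_g$, one checks directly that $r_k$ commutes with $\phi(a)$ and $\Iu_g$.

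\emph{The ``if'' direction.} Given such $r_n$, define $\psi:=r_1^*\phi(\cdot)r_1$ and $\Iv_g:=r_1^*\Iu_gr_1$. Because $r_1$ commutes with $\phi(A)$ and with $\Iu_g$, and is $\beta$-invariant, the pair $(\psi,\Iv)$ is a cocycle representation; indeed $\Iv$ is a cocycle since $r_1r_1^*$ commutes with $\Iu_g$. We then get $\phi(a)=\sum_n r_nr_n^*\phi(a)=\sum_n r_n\psi(a)r_n^*$, using $r_n^*\phi(a)r_n=\psi(a)$ by commutation. The same computation works for $\Iu_g$. Hence $(\phi,\Iu)$ is the infinite repeat of $(\psi,\Iv)$ with respect to the $t_n:=r_n$.

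\emph{Vanishing $K$-theory.} Put $D=(\CM(B)\cap\phi(A)')^{\beta^\Iu}$. The $r_n$ lie in $D$: they commute with $\phi(A)$, and $\beta^\Iu_g(r_n)=\Iu_g\beta_g(r_n)\Iu_g^*=r_n$. Define the endomorphism $\mu:D\to D$ by $\mu(d)=\sum_{n\ge 2} r_n d r_n^*$ (strict limit), and let $\theta(d)=r_1dr_1^*$.

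We must check that $\mu$ lands in $D$. Commutation with $\phi(A)$ follows from the $r_n$ commuting with $\phi(A)$. Invariance follows because $\beta^\Iu_g$ is strictly continuous on bounded sets and fixes each $r_n$. The sum $\theta(d)+\mu(d)=\sum_{n\ge1}r_ndr_n^*$ is unitarily equivalent, inside $D$, to $\mu(d)$ via the unitary $w=\sum_{n\ge1} r_{n+1}r_n^*+\ldots$. More precisely, the isometry $v=\sum_n r_{n+1}r_n^*\in D$ satisfies $v(\theta+\mu)(d)v^*=\mu(d)$.

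The maps $\theta$ and $\mu$ have orthogonal ranges, so $K_*(\theta)+K_*(\mu)=K_*(\theta+\mu)$. Conjugation by an isometry in $D$ induces the identity on $K$-theory (a standard fact, applied to the isometry $v$ on the range projection). This gives $K_*(\theta+\mu)=K_*(\mu)$. Moreover $\theta$ is conjugation by the isometry $r_1\in D$, so $K_*(\theta)=\id$. Combining, $\id+K_*(\mu)=K_*(\mu)$, and therefore $K_*(D)=0$.

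The main obstacle is this last step. We must verify carefully that conjugation by a (non-unitary) isometry in the unital algebra $D$ acts trivially on $K_0$ and $K_1$, and that the orthogonal sum of $*$-homomorphisms is additive on $K$-theory. Both are standard, as in Cuntz's lemma.
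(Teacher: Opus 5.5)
Your proposal is correct and follows the paper's proof: you use the same isometries $r_k=\sum_n t_{\sigma(n,k)}t_n^*$, built from a partition of $\IN$ into infinitely many infinite sets, and the same easy converse. Your $\psi=r_1^*\phi(\cdot)r_1$ and $\Iv=r_1^*\Iu r_1$ are simply $\phi$ and $\Iu$, so $(\phi,\Iu)=(\phi,\Iu)^\infty$ on the nose, and the auxiliary $s_n$ are never used. Your $K$-theory step, $\mu=\ad(v)\circ(\theta+\mu)$ with $v=\sum_n r_{n+1}r_n^*$, is exactly the paper's identity $x^\infty=x\oplus_{r_1,r_\infty}x^\infty$ with $r_\infty=v$, and you conclude by the same additivity of orthogonal sums and the fact that conjugation by an isometry acts trivially on $K$-theory.
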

\begin{proof}
The ``if'' part is trivial: If $r_n$ is such a sequence of isometries, then forming the infinite repeat of $(\phi,\Iu)$ via this chosen sequence yields the (on the nose) equality $(\phi,\Iu)=(\phi,\Iu)^\infty$.

For the ``only if'' part, suppose that $(\phi,\Iu)$ is the infinite repeat of some cocycle representation $(\psi,\Iv)$.
Let $t_n\in\CM(B)^\beta$ be a sequence of isometries with $\sum_{n=1}^\infty t_n t_n^*=\eins$ and such that
\[
\phi(a)=\sum_{n=1}^\infty t_n\psi(a)t_n^*,\quad a\in A,
\]
and
\[
\Iu_g=\sum_{n=1}^\infty t_n\Iv_g t_n^*,\quad g\in G.
\]
Choose some sequence of pairwise disjoint infinite subsets $X_j\subset\IN$ with $\IN=\bigsqcup_{j=1}^\infty X_j$.
For each $j\geq 1$ we choose a bijection $\kappa_j: \IN\to X_j$ and consider the strict limit $r_j=\sum_{n=1}^\infty t_{\kappa_j(n)} t_n^*$.
We see all these isometries commute with the range of both $\phi$ and $\Iu$ pointwise and thus belong to the \cstar-algebra $(\CM(B)\cap\phi(A)'\cap\{\Iu_g\}_{g\in G}')^{\beta}$.
Furthermore the sequence satisfies $\sum_{n=1}^\infty r_n r_n^*=\eins$ in the strict topology.

For the last part of the statement, we keep a sequence $r_n$ chosen as in the statement.
Consider the strict limit $r_\infty=\sum_{n=1}^\infty r_{n+1}r_n^*$, which is an isometry in $(\CM(B)\cap\phi(A)')^{\beta^\Iu}$ and fits into the equation $r_1 r_1^* + r_\infty r_\infty^*=\eins$.
For any given element $x\in (\CM(B)\cap\phi(A)')^{\beta^\Iu}$, we can form its infinite repeat $x^\infty$ within $(\CM(B)\cap\phi(A)')^{\beta^\Iu}$ via the sequence $(r_n)_n$ and observe the equation
\[
x^\infty = \sum_{n=1}^\infty r_n x r_n^* = r_1 x r_1^* + r_\infty\Big( \sum_{n=1}^\infty r_n x r_n^* \Big) r_\infty^* = x\oplus_{r_1,r_\infty} x^\infty.
\]
Due to the additive properties of $K$-theory under Cuntz addition, this implies that the identity map of $(\CM(B)\cap\phi(A)')^{\beta^\Iu}$ induces the zero morphism on its $K$-groups.
This means that the $K$-groups vanish.
\end{proof}

\begin{defi} \label{def:various-equivalences}
Let $(\phi,\Iu), (\psi,\Iv): (A,\alpha)\to (\CM(B),\beta)$ be two cocycle representations.
We write $(\phi,\Iu)\asymp (\psi,\Iv)$, if there exists a norm-continuous path $U: [0,\infty)\to\CU(\CM(B))$ such that
	\begin{enumerate}[leftmargin=5mm,label={$\bullet$}]
  	\item $\psi(a) = \dst\lim_{t\to\infty} U_t \phi(a) U_t^*$ for all $a\in A$;
  	\item $\psi(a) - U_t \phi(a) U_t^* \in B$ for all $a\in A$ and $t\geq 0$;
  	\item $\dst\lim_{t\to\infty} \max_{g\in K}\ \| \Iv_g - U_t \Iu_g \beta_g(U_t)^* \| =0$ for all compact sets $K\subseteq G$;
  	\item $\Iv_g - U_t \Iu_g \beta_g(U_t)^* \in B$ for all $t\geq 0$ and $g\in G$.
  \end{enumerate} 
If it is possible to choose $U$ to have its range in $\CU(\eins+B)$, then $(\phi,\Iu)$ and $(\psi,\Iv)$ are called \emph{properly asymptotically unitarily equivalent}.
If it is additionally possible to arrange for $U_0=\eins$, then $(\phi,\Iu)$ and $(\psi,\Iv)$ are called \emph{strongly asymptotically unitarily equivalent}.
\end{defi}

\begin{defi} \label{def:absorption}
%Assume that $\beta$ is strongly stable.
Let $(\phi,\Iu), (\psi,\Iv): (A,\alpha)\to (\CM(B),\beta)$ be two cocycle representations.
\begin{enumerate}[label=\textup{(\roman*)}, leftmargin=*]
\item We say that $(\phi,\Iu)$ \emph{absorbs} $(\psi,\Iv)$ if $(\phi\oplus\psi,\Iu\oplus\Iv)\asymp (\phi,\Iu)$.
\item We say that $(\phi,\Iu)$ \emph{weakly absorbs} $(\psi,\Iv)$ if there exists a unitary $W\in\CU(\CM(B))$ such that $\phi(a)-W(\phi\oplus\psi)(a)W^*\in B$ for all $a\in A$ and $\Iu_g-W(\Iu_g\oplus\Iv_g)\beta_g(W)^*\in B$ for all $g\in G$.
\end{enumerate}
\end{defi}

In line with the above condition, a given cocycle representation $(\phi,\Iu)$ is called \emph{absorbing} if it absorbs every cocycle representation $(A,\alpha)\to(\CM(B),\beta)$.
It was proved as part of \cite[Theorem 4.16]{GabeSzabo25} that such representations always exist as long as both $A$ and $B$ are separable.

\begin{defi}[{\cite[Definition 4.3]{GabeSzabo25}}] \label{def:wc}
Let $(\psi,\Iv): (A,\alpha) \to (\CM(B),\beta)$ be a cocycle representation, and let $\FC$ be a family of cocycle representations from $(A,\alpha)$ to $(\CM(B),\beta)$.
We say that $(\psi,\Iv)$ is \emph{weakly contained} in $\FC$, written $(\psi,\Iv)\wc \FC$,
if the following is true.

For all compact sets $K\subseteq G$, $\CF\subset A$, every $\eps> 0$, and every contraction $b\in B$ there exist $(\phi^{(1)},\Iu^{(1)}),\dots,(\phi^{(\ell)},\Iu^{(\ell)})\in\FC$ and a collection of elements
\[
\{c_{j,k} \mid j=1,\dots, N,\ k=1,\dots,\ell\}\subset B
\]
satisfying
\[
\max_{g\in K} \Big\| b^*\Iv_g\beta_g(b)-\sum_{k=1}^\ell \sum_{j=1}^{N} c_{j,k}^*\Iu^{(k)}_g\beta_g(c_{j,k}) \Big\| \leq \eps
\]
and
\[
\max_{a\in\CF} \Big\| b^*\psi(a)b - \sum_{k=1}^\ell \sum_{j=1}^N c_{j,k}^*\phi^{(k)}(a)c_{j,k} \Big\| \leq \eps.
\]
In particular, if $\FC$ consists of a single cocycle representation $(\phi,\Iu)$, then we write $(\psi,\Iv)\wc (\phi,\Iu)$ in place of $(\psi,\Iv)\wc\set{(\phi,\Iu)}$, and in this case one can choose $\ell=1$ above.
\end{defi}

\begin{defi}[{\cite[Definition 4.5]{GabeSzabo25}}] \label{def:contained-at-infty}
Let $(\phi,\Iu), (\psi,\Iv): (A,\alpha) \to (\CM(B),\beta)$ be two cocycle representations.
We say that $(\psi,\Iv)$ is \emph{contained in $(\phi,\Iu)$ at infinity}, if the following is true.
For all compact sets $K\subseteq G$, $\CF\subset A$, every $\eps> 0$, and every contraction $b\in B$ there exists an element $x\in B$ satisfying
\[
\max_{g\in K} \| b^*\Iv_g\beta_g(b)-x^*\Iu_g\beta_g(x) \| \leq \eps,\ \max_{a\in\CF} \| b^*\psi(a)b - x^*\phi(a)x \| \leq \eps,\ \| x^*b \| \leq \eps.
\]
\end{defi}

\begin{prop}[{\cite[Lemma 4.11]{GabeSzabo25}}] \label{prop:almost-infinite-repeat}
Let $(\phi,\Iu), (\psi,\Iv): (A,\alpha)\to (\CM(B),\beta)$ be two cocycle representations.
The following are equivalent:
\begin{enumerate}[label=\textup{(\roman*)},leftmargin=*]
\item \label{lem:abs-inf-repeat:1}
$(\psi,\Iv)$ is contained in $(\phi,\Iu)$ at infinity;
\item \label{lem:abs-inf-repeat:2}
$(\phi,\Iu)$ absorbs $(\psi^\infty,\Iv^\infty)$.
\item \label{lem:abs-inf-repeat:3}
$(\phi,\Iu)$ weakly absorbs $(\psi^\infty,\Iv^\infty)$.
\end{enumerate}
In particular, $(\phi,\Iu)$ contains itself at infinity if and only if modulo $B$, it can be expressed as the unitary conjugate of an infinite repeat.
\end{prop}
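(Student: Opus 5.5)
The plan is to prove the cycle \ref{lem:abs-inf-repeat:2}$\Rightarrow$\ref{lem:abs-inf-repeat:3}$\Rightarrow$\ref{lem:abs-inf-repeat:1}$\Rightarrow$\ref{lem:abs-inf-repeat:2}, and then deduce the final sentence.

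\textbf{\ref{lem:abs-inf-repeat:2}$\Rightarrow$\ref{lem:abs-inf-repeat:3}.} This is immediate from \autoref{def:various-equivalences}. If $U_t$ witnesses $(\phi\oplus\psi^\infty,\Iu\oplus\Iv^\infty)\asymp(\phi,\Iu)$, then $W=U_0$ already satisfies the congruences modulo $B$ required in \autoref{def:absorption}.

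\textbf{\ref{lem:abs-inf-repeat:3}$\Rightarrow$\ref{lem:abs-inf-repeat:1}.} Write the Cuntz sum via $t_1,t_2\in\CM(B)^\beta$, and write $\psi^\infty$ via isometries $s_n\in\CM(B)^\beta$ with $\sum_n s_ns_n^*=\eins$ strictly. Given $W$, we have $\phi(a)=W(\phi\oplus\psi^\infty)(a)W^*+k_a$ and $\Iu_g=W(\Iu_g\oplus\Iv^\infty_g)\beta_g(W)^*+k_g$ with $k_a,k_g\in B$. Fix $b,\CF,K,\eps$ and set $x_n=Wt_2s_nb\in B$. Then
\[
x_n^*\phi(a)x_n=b^*\psi(a)b+b^*s_n^*t_2^*W^*k_aWt_2s_nb,
\]
and similarly for the cocycle, where $\beta_g(x_n)=\beta_g(W)t_2s_n\beta_g(b)$ because $t_2,s_n$ are fixed.

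The error terms all contain a factor $s_n^*c$ with $c\in B$. Since $\|s_n^*c\|^2=\|c^*s_ns_n^*c\|\le\|c^*(\eins-\sum_{m<n}s_ms_m^*)c\|\to0$, these errors go to zero. The same mechanism gives $\|x_n^*b\|\to0$. For uniformity over $g\in K$, I would use that $g\mapsto k_g\beta_g(W t_2)$ is strictly continuous and $g\mapsto\beta_g(b)$ is norm continuous. Hence $\{W^*k_g\beta_g(Wt_2)\beta_g(b)\}_{g\in K}$ is a norm-compact subset of $B$, on which $s_n^*t_2^*(\cdot)$ tends to zero uniformly. Choosing $n$ large yields \ref{lem:abs-inf-repeat:1}.

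\textbf{\ref{lem:abs-inf-repeat:1}$\Rightarrow$\ref{lem:abs-inf-repeat:2}.} This is the main obstacle. I would first use \ref{lem:abs-inf-repeat:1} repeatedly together with \autoref{lem:Kasparov}, applied to a separable \cstar-algebra containing $\phi(A)$, the cocycle values on a dense set, and the elements built so far. This produces a strictly summable sequence of elements $x_{n}\in B$ (one for each pair of a summand index of $\psi^\infty$ and a piece $b=e_{m}-e_{m-1}$ of an approximate unit) with the following properties. They are almost orthogonal, land almost in the complement of the previous ones (this is where $\|x^*b\|$ small is used), and approximately intertwine $\psi^\infty$ and $\Iv^\infty$ with $\phi$ and $\Iu$, with errors summable and lying in $B$.

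From these elements I would assemble a strict-limit isometry $V\in\CM(B)$, after a polar-decomposition correction, with
\[
\phi(a)V-V\psi^\infty(a)\in B,\qquad \Iu_gV-V\Iv^\infty_g\beta_g(\ldots)\in B,
\]
and, more strongly, with these errors tending to zero along a continuous reparametrization. The standard Voiculescu--Kasparov trick then passes from this to asymptotic unitary equivalence. One uses $\psi^\infty\cong\psi^\infty\oplus\psi^\infty$ and a rotation homotopy in $\mathrm{M}_2$ to turn the isometry-type intertwiner into the unitary path demanded by $\asymp$. The delicate part is keeping all errors in $B$ and simultaneously norm-small uniformly on compact subsets of $G$; the $\beta$-quasi-invariance from \autoref{lem:Kasparov} is exactly what handles the cocycle.

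\textbf{Final sentence.} If $(\phi,\Iu)$ contains itself at infinity, then \ref{lem:abs-inf-repeat:1}$\Rightarrow$\ref{lem:abs-inf-repeat:3} gives that $(\phi,\Iu)$ weakly absorbs $(\phi,\Iu)^\infty$. Moreover $(\phi\oplus\phi^\infty,\Iu\oplus\Iu^\infty)$ is unitarily equivalent to $(\phi,\Iu)^\infty$ via a fixed unitary. Hence modulo $B$, $(\phi,\Iu)$ is a unitary conjugate of an infinite repeat.

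Conversely, suppose modulo $B$ that $(\phi,\Iu)$ is a unitary conjugate of $(\rho,\Iw)^\infty$. Since $(\rho,\Iw)^\infty$ is unitarily equivalent to $(\rho,\Iw)^\infty\oplus(\rho,\Iw)^\infty$, it follows that $(\phi,\Iu)$ weakly absorbs $(\rho,\Iw)^\infty$. By \ref{lem:abs-inf-repeat:3}$\Rightarrow$\ref{lem:abs-inf-repeat:1}, $(\rho,\Iw)$ is therefore contained in $(\phi,\Iu)$ at infinity.

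It remains to see that $(\phi,\Iu)$ is contained in itself at infinity. The defining estimates only see $(\phi,\Iu)$ modulo $B$ up to vanishing errors, by the same $s_n^*c\to0$ argument as above. So it suffices that $(\rho,\Iw)^\infty$ is contained in itself at infinity. This follows directly by compressing with the far-out isometries $s_n$ and comparing two copies of the same summand.
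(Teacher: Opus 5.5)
The paper does not reprove this statement; it quotes it from \cite[Lemma 4.11]{GabeSzabo25}, so I am judging your argument on its own. The cycle (ii)$\Rightarrow$(iii)$\Rightarrow$(i)$\Rightarrow$(ii) and the ``only if'' half of the last sentence are structurally fine. The genuine gap is in the converse half of the last sentence.

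\textbf{The converse of the last sentence.} Your reduction says the defining estimates only see $(\phi,\Iu)$ modulo $B$ up to vanishing errors, so it suffices that $(\rho,\Iw)^\infty$ be contained in itself at infinity. This is false. Suppose $\phi(a)=W\rho^\infty(a)W^*+k_a$ with $k_a\in B$. In the condition for $(\phi,\Iu)$ to be contained in itself at infinity, $k_a$ also enters the \emph{contained} slot as $b^*k_ab$. That term is fixed once $b$ is chosen, and it does not disappear when $x$ is pushed to infinity.

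In fact the statement you are proving there is false when ``an infinite repeat'' means the infinite repeat of an arbitrary $(\rho,\Iw)$. Take $G=\{1\}$, $A=\IC$, $B=\CK$, and $\phi(\lambda)=\lambda e$ for a rank-one projection $e$. Then $\phi$ agrees with $0=0^\infty$ modulo $B$. Now take $b=e$ and $\CF=\{1\}$ in \autoref{def:contained-at-infty}. Any admissible $x$ satisfies $\|x^*x-e\|\le\eps$ (the cocycle estimate at $g=1$) and $\|x^*ex-e\|\le\eps$. Hence $\|(\eins-e)x\|\le\sqrt{2\eps}$, while $\|ex\|=\|x^*e\|\le\eps$. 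For small $\eps$ this contradicts $\|x\|^2\ge 1-\eps$.

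So the sentence must be read as ``a unitary conjugate of its own infinite repeat $(\phi,\Iu)^\infty$''. This is consistent with its use in \autoref{rem:D-infinite-repeat}. In that reading your argument already ends one paragraph earlier. With $(\rho,\Iw)=(\phi,\Iu)$ you showed that $(\phi,\Iu)$ weakly absorbs $(\phi,\Iu)^\infty$, and (iii)$\Rightarrow$(i) then says exactly that $(\phi,\Iu)$ is contained in itself at infinity. Drop the final paragraph.

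\textbf{Uniformity in (iii)$\Rightarrow$(i).} The actual cocycle error term is $b^*s_n^*t_2^*W^*k_g\beta_g(Wt_2)s_n\beta_g(b)$. The isometry $s_n$ sits between $\beta_g(Wt_2)$ and $\beta_g(b)$, so the compact set you exhibit is not the relevant one. Strict continuity of $g\mapsto k_g$ alone would not give uniformity over $K$: a strictly continuous family in $B$ can escape to infinity along the ranges of the $s_n$.

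What rescues this is that $k_g=\Iu_g-W(\Iu_g\oplus\Iv^\infty_g)\beta_g(W)^*$ is a difference of two $\beta$-cocycles with values in $B$. Such a difference is automatically norm-continuous; this is the fact behind the footnote to \autoref{def:equi-Cuntz-pair}, see \cite[Proposition 6.9]{Szabo21cc}. Then $\{t_2^*W^*k_g\beta_g(Wt_2)\}_{g\in K}$ is norm-compact. The error is bounded by $\sup_{g\in K}\|s_n^*t_2^*W^*k_g\beta_g(Wt_2)\|$, which tends to $0$.

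\textbf{The outline of (i)$\Rightarrow$(ii).} The Kasparov approximate unit must be quasicentral for $\psi(A)$ and approximately $\beta^{\Iv}$-invariant, not chosen relative to $\phi(A)$. The reason is that the pieces $b_m=(e_m-e_{m-1})^{1/2}$ are compressed against $(\psi,\Iv)$. Concretely, $\|\phi(a)x-x\psi(a)\|^2\approx\|b^*\psi(a^*a)b-\psi(a)^*b^*b\psi(a)\|$, using $x^*x\approx b^*b$ from $g=1$, and this is small only by quasicentrality for $\psi$.

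Almost-orthogonality to previously chosen elements $y$ comes from applying (i) with a large approximate-unit element $e$ in place of $b$, where $ey\approx y$ and $eb_m\approx b_m$, and then using $xb_m$. With these corrections the outline is the standard intertwining-isometry argument. As written, though, it is a plan rather than a proof of the main implication.
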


%%%

\subsection{The Cuntz--Thomsen picture of equivariant $KK$-theory}

\begin{defi}[cf.\ {\cite[Section 3]{Thomsen98}}] \label{def:equi-Cuntz-pair}
Let $\alpha: G\curvearrowright A$ and $\beta: G\curvearrowright B$ be two actions on \cstar-algebras such that $A$ is separable and $B$ is $\sigma$-unital.
An \emph{(equivariant) $(\alpha,\beta)$-Cuntz pair} is a pair of cocycle representations
\[
(\phi,\Iu), (\psi,\Iv): (A,\alpha) \to (\CM(B\otimes\CK),\beta\otimes\id_\CK),
\]
such that the pointwise differences $\phi-\psi$ and $\Iu-\Iv$ take values in $B\otimes\CK$.\footnote{In Thomsen's article it was also assumed that the map $\Iu-\Iv$ is norm-continuous. This turns out to be redundant, see \cite[Proposition 6.9]{Szabo21cc}.}
A pair of the specific form $\big( (\phi,\Iu), (\phi,\Iu) \big)$ is called \emph{degenerate}.
Whenever $\beta$ is assumed to be strongly stable (this is our default assumption), we also allow $(B,\beta)$ in place of $(B\otimes\CK,\beta\otimes\id_\CK)$ appearing in the definition of an equivariant $(\alpha,\beta)$-Cuntz pair.
\end{defi}

\begin{nota}
Given a \cstar-algebra $B$, we denote $B[0,1]=\CC[0,1]\otimes B$.
If one has an action $\beta: G\curvearrowright B$, we consider the induced $G$-action on $B[0,1]$ given by $\beta[0,1]=\id_{\CC[0,1]}\otimes\beta$.
\end{nota}

\begin{defi}[see {\cite[Section 3]{Thomsen98} and \cite[Section 2]{GabeSzabo25}}] \label{def:KKG-Thomsen}
Let $A$ be a separable \cstar-algebra and $B$ a $\sigma$-unital \cstar-algebra.
For two actions $\alpha: G\curvearrowright A$ and $\beta: G\curvearrowright B$,
let $\IE^G(\alpha,\beta)$ denote the set of all $(\alpha,\beta)$-Cuntz pairs.

Two elements $\big( (\phi^0,\Iu^0), (\psi^0,\Iv^0) \big)$ and $\big( (\phi^1,\Iu^1), (\psi^1,\Iv^1) \big)$ in $\IE^G(\alpha,\beta)$ are called \emph{homotopic}, abbreviated $\big( (\phi^0,\Iu^0), (\psi^0,\Iv^0) \big)\sim_h \big( (\phi^1,\Iu^1), (\psi^1,\Iv^1) \big)$, if there exists an $(\alpha,\beta[0,1])$-Cuntz pair that restricts to $\big( (\phi^0,\Iu^0), (\psi^0,\Iv^0) \big)$ upon evaluation at $0\in [0,1]$, and restricts to $\big( (\phi^1,\Iu^1), (\psi^1,\Iv^1) \big)$ upon evaluation at $1\in [0,1]$.
An $(\alpha,\beta)$-Cuntz pair of the form $\big( (\phi,\Iu), (\psi,\Iv) \big)$ with $\phi=\psi=0$ is called a \emph{cocycle pair} and is denoted by $(\Iu,\Iv)$ with slight abuse of notation.
We define $\IE_0^G(\alpha,\beta)$ as the set of all \emph{anchored} $(\alpha,\beta)$-Cuntz pairs, i.e., those $\big( (\phi,\Iu), (\psi,\Iv) \big)\in\IE^G(\alpha,\beta)$ such that $(\Iu,\Iv)\sim_h (\eins,\eins)$.

For any unital inclusion $\CO_2\subseteq\CM(B\otimes\CK)^{\beta\otimes\id_\CK}$ with generating isometries $t_1,t_2$, one can perform the Cuntz addition for two $(\alpha,\beta)$-Cuntz pairs as
\[
\begin{array}{cl}
\multicolumn{2}{l}{
\big( (\phi^0,\Iu^0), (\psi^0,\Iv^0) \big) \oplus_{t_1,t_2} \big( (\phi^1,\Iu^1), (\psi^1,\Iv^1) \big) } \\
=& \big( (\phi^0,\Iu^0)\oplus_{t_1,t_2}(\phi^1,\Iu^1), (\psi^0,\Iv^0)\oplus_{t_1,t_2} (\psi^1,\Iv^1) \big).
\end{array}
\]
This Cuntz pair is independent of the choice of $t_1,t_2$ up to homotopy; see \cite[Lemma 3.4]{Thomsen98}.
\end{defi}

\begin{rem}[see {\cite[Proposition 2.12]{GabeSzabo25}}] \label{def:KKG}
The quotient $\IE^G(\alpha,\beta)/{\sim_h}$ becomes an abelian group with Cuntz addition.
The homotopy classes of cocycle pairs form a canonical subgroup $H_\beta$ independent of $\alpha$.
It was proved by Thomsen in \cite[Theorem 3.5]{Thomsen98} that the group quotient of $\IE^G(\alpha,\beta)/{\sim_h}$ modulo $H_\beta$ is naturally isomorphic to $KK^G(\alpha,\beta)$ if one defines the latter via Kasparov's original approach \cite{Kasparov88}.
For an $(\alpha,\beta)$-Cuntz pair consisting of $(\phi,\Iu)$ and $(\psi,\Iv)$, we denote its associated homotopy class by $[(\phi,\Iu),(\psi,\Iv)]$.
Under this identification, one has that the inclusion map $\IE^G_0(\alpha,\beta)\subseteq\IE^G(\alpha,\beta)$ also induces a natural isomorphism of abelian groups $\IE^G_0(\alpha,\beta)/{\sim_h}\cong KK^G(\alpha,\beta)$.
In other words, $KK^G(\alpha,\beta)$ may be defined as the abelian group of homotopy classes of anchored $(\alpha,\beta)$-Cuntz pairs.
\end{rem}

%%%%%%%%%%

\section{$K_1$-injectivity of certain auxiliary \cstar-algebras}

The following notation was already used in \cite{GabeSzabo25} and is inspired by notation appearing in the work of Dadarlat--Eilers in \cite[Section 3]{DadarlatEilers01}.
We note that for \cstar-dynamics, a very similar type of auxiliary \cstar-algebras was considered by Thomsen in \cite[Section 6]{Thomsen05} and its conceptual role was similar to how we will use the objects below in this article.

\begin{nota} \label{nota:D-phi}
Let $\alpha: G\curvearrowright A$ and $\beta: G\curvearrowright B$ be actions on \cstar-algebras.
Let $(\phi,\Iu): (A,\alpha)\to (\CM(B),\beta)$ be a cocycle representation.
We then consider the two \cstar-algebras
\[
\SD_{(\phi,\Iu)}=\set{ x\in\CM(B) \mid [x,\phi(A)]\subseteq B, \set{x-\beta^\Iu_g(x)}_{g\in G}\subseteq B}
\]
and
\[
\SD^w_{(\phi,\Iu)}=\set{ x\in\CM(B) \mid [x,\phi(A)]\subseteq B, \set{ \phi(a)(\beta^\Iu_g(x)-x)}_{a\in A, g\in G}\subseteq B }.
\]
Note that these two objects coincide for $G=\{1\}$.
Indeed, if $\phi: A\to\CM(B)$ is any $*$-homomorphism, then it can be viewed as a cocycle representation for actions of the trivial group, and we simply write 
\[
\SD_\phi=\set{ x\in\CM(B) \mid [x,\phi(A)]\subseteq B}.
\]
\end{nota}

For the rest of this section, we continue to follow \autoref{standing-assumptions}.

\begin{rem} \label{rem:D-infinite-repeat}
Let $(\phi,\Iu),(\psi,\Iv): (A,\alpha)\to(\CM(B),\beta)$ be two cocycle representations.
Suppose there exists $U\in\CU(\CM(B))$ such that $(\phi,\Iu)$ agrees with $\ad(U)\circ(\psi,\Iv)$ modulo $B$.
Based on the definitions, we can directly conclude that $\ad(U)\in\Aut(\CM(B))$ restricts to isomorphisms 
\[
\SD^w_{(\psi,\Iv)}\cong \SD^w_{(\phi,\Iu)} \quad\text{and}\quad \SD_{(\psi,\Iv)}\cong \SD_{(\phi,\Iu)}.
\]
If we assume that $(\phi,\Iu)$ contains itself at infinity and apply \autoref{prop:almost-infinite-repeat}, one obtains isomorphisms $\SD^w_{(\phi,\Iu)}\cong\SD^w_{(\phi,\Iu)^\infty}$ and $\SD_{(\phi,\Iu)}\cong\SD_{(\phi,\Iu)^\infty}$.
\end{rem}

For what follows, we recall that a unital \cstar-algebra $D$ is called \emph{properly infinite} if it contains two isometries $s,t\in D$ with $s^*t=0$.

\begin{nota}[see {\cite[Subsection 4.2]{KirchbergC}}]
Let $D$ be a properly infinite unital \cstar-algebra.
We say that a projection $p\in D$ is a \emph{splitting projection}, if there exist isometries $s,t\in D$ such that $ss^*\leq p$ and $tt^*\leq\eins-p$.
We say that $D$ is \emph{in Cuntz standard form}, if there exists a unital inclusion $\CO_2\subseteq D$.
\end{nota}

\begin{rem} \label{rem:Cuntz}
It follows from Cuntz's fundamental article \cite{Cuntz81} that the canonical map $p\mapsto [p]_0$ induces a one-to-one correspondence between unitary equivalence classes of splitting projections in $D$ and the group $K_0(D)$.
Furthermore, if $D$ is in Cuntz standard form and $p\in D$ is a splitting projection, then $[p]_0=0$ if and only if $p\sim_\MvN\eins-p\sim_\MvN\sim\eins$.
Note that in \cite{CuntzHigson87}, projections with the latter property were referred to as ``proper'' projections.
We shall not use this terminology because it may be considered too ambiguous from today's point of view, but wish to point this out to the reader as we will reference said article below.
\end{rem}

\begin{lemma} \label{lem:Cuntz-Higson}
Let $D$ be a properly infinite unital \cstar-algebra in Cuntz standard form.
Let $p,q\in D$ be two splitting projections with vanishing $K_0$-class.
Suppose that there exist isometries $s_1,s_2\in D$ such that $s_1s_1^*\leq p$, $s_2 s_2^*\leq q$ and $\|s_1^*s_2\|<1$.
Then $p$ and $q$ are homotopic.
\end{lemma}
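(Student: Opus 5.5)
The plan is to follow Cuntz--Higson's argument. First we connect $p$ and $q$ to a common intermediate projection, using the near-orthogonality of the two isometries. Then we use the vanishing of $K_0$-classes to move freely within the projections that are Murray--von Neumann equivalent to their complements and to $\eins$.

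\textbf{Step 1: homotopy through the isometries.} Since $\|s_1^*s_2\|<1$, the path $w_t=(1-t)s_1+ts_2$, $t\in[0,1]$, consists of elements that are bounded below:
\[
w_t^*w_t=(1-t)^2+t^2+2t(1-t)\,\mathrm{Re}(s_1^*s_2)\geq (1-t)^2+t^2-2t(1-t)\|s_1^*s_2\|>0.
\]
Hence $v_t=w_t(w_t^*w_t)^{-1/2}$ is a norm-continuous path of isometries from $s_1$ to $s_2$. The range projections $e_t=v_tv_t^*$ then give a homotopy from $e_0=s_1s_1^*\leq p$ to $e_1=s_2s_2^*\leq q$.

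\textbf{Step 2: reduce to subprojections.} It now suffices to show that $p$ is homotopic to $s_1s_1^*$, and likewise $q$ to $s_2s_2^*$. The range projection of an isometry need not be a splitting projection itself, so we shrink first. Choose two isometries $u_1,u_2\in D$ generating $\CO_2\subseteq D$. Replace $s_1$ by $s_1u_1$, and similarly $s_2$ by $s_2u_1$. The condition $\|(s_1u_1)^*(s_2u_1)\|\leq\|s_1^*s_2\|<1$ persists, so Step 1 still applies. Now $f=s_1u_1u_1^*s_1^*\leq p$ has the property that $p-f\geq s_1u_2u_2^*s_1^*$ contains the range of an isometry, and $\eins-f\geq \eins-p$ does as well. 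Thus $f$ is a splitting projection with $[f]_0=[\eins]_0=0$; the latter vanishes because $D$ is in Cuntz standard form. By \autoref{rem:Cuntz}, $f\sim_\MvN\eins-f\sim_\MvN\eins$. Moreover $p-f$ is a full properly infinite projection (it dominates the range of an isometry), and $\eins-p$ is as well.

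\textbf{Step 3: homotopy from $p$ to $f$.} This is the main obstacle. I would reproduce the Cuntz--Higson rotation trick. We have $f\leq p$, and the three projections $f$, $p-f$, $\eins-p$ are each equivalent to $\eins$. The last uses $[p]_0=0$ together with \autoref{rem:Cuntz}, which gives $\eins-p\sim_\MvN\eins$, and a similar argument for $p-f$, whose $K_0$-class is $[p]_0-[f]_0=0$. So we can write $\eins=f+(p-f)+(\eins-p)$ as a decomposition into three copies of $\eins$. Inside the corner $\eins-f$, which is isomorphic to $D$, the projections $p-f$ and $0$ are not homotopic directly. Instead, use a partial isometry $x$ with $x^*x=p-f$ and $xx^*\leq f$, where the range sits inside a further halving of $f$ (halve $f$ using $\CO_2$ again, with the halving chosen in advance). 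The path of projections $\cos^2(\theta)(p-f)+\sin^2(\theta)xx^*+\sin\theta\cos\theta(x+x^*)$ rotates $p-f$ onto $xx^*\leq f$. Adding the fixed projection $f-xx^*$, which is orthogonal to both $p-f$ and $xx^*$, gives a path from $p$ to $f$. Precisely, the path runs from $(f-xx^*)+(p-f)+xx^*$, which equals $p$, to $(f-xx^*)+xx^*=f$.

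The delicate point is arranging $xx^*\leq f$ orthogonal to the remaining summand. This is done by choosing $x$ with range in $s_1u_1u_2u_2^*u_1^*s_1^*$ and letting $f$ be correspondingly large. If necessary, replace $f$ by the smaller projection $s_1u_1u_1u_1^*u_1^*s_1^*$, which is still connected to the Step 1 endpoint by the same argument. Combining Steps 1--3 for $p$ and $q$ then yields the homotopy $p\simeq q$.
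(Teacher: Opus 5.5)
Your Step 1 is correct; it supplies the link $s_1s_1^*\sim_h s_2s_2^*$ in the paper's chain. The gap is in Step 3, where the path you write down does not start at $p$.

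Set $g=xx^*$ and let $P_\theta$ be your rotation from $p-f$ to $g$. Your path is $Q_\theta=(f-g)+P_\theta$, so $Q_0=(f-g)+(p-f)=p-g$, not $(f-g)+(p-f)+g=p$. The extra summand $g$ you insert at $\theta=0$ is not part of the path. It also cannot be added throughout, since $P_\theta g=\sin^2\theta\, g+\sin\theta\cos\theta\, x^*\neq 0$ for $\theta>0$. So you have only shown $p-g\sim_h f$. Connecting $p$ to its subprojection $p-g$ is exactly the same kind of problem as connecting $p$ to $f$. A rotation performed entirely inside $p$ cannot shrink $p$; you need room outside $p$. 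The closing remarks about halving $f$ further do not address this.

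The missing idea is to route the homotopy through $\eins-p$. Since $[p]_0=0$, \autoref{rem:Cuntz} gives $p\sim_{\mathrm{MvN}}\eins-p\sim_{\mathrm{MvN}}\eins$. Your own rotation formula shows that orthogonal, Murray--von Neumann equivalent projections are homotopic. Hence $p\sim_h\eins-p$.

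Next, $s_1s_1^*\le p$ is orthogonal to $\eins-p$, and both are equivalent to $\eins$, so $\eins-p\sim_h s_1s_1^*$. Together with Step 1 and the symmetric argument for $q$, this gives
$p\sim_h\eins-p\sim_h s_1s_1^*\sim_h s_2s_2^*\sim_h\eins-q\sim_h q$,
which is precisely the chain in the paper's proof.

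With this, your Step 2 becomes unnecessary. Contrary to your remark there, $s_1s_1^*$ is already a splitting projection here: $\eins-s_1s_1^*\geq\eins-p$, and $\eins-p$ dominates the range of an isometry.
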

\begin{proof}
As both $p$ and $q$ are splitting projections, it immediately follows for $j=1,2$ that the range projections $s_js_j^*$ are also splitting projections with vanishing $K_0$-class.
Using Cuntz--Higson's \cite[Lemma 1]{CuntzHigson87} repeatedly, it follows that we have a chain of homotopies
\[
p \sim_h \eins-p \sim_h s_1s_1^* \sim_h s_2s_2^* \sim_h \eins-q \sim_h q. 
\]
This finishes the argument.
\end{proof}

Recall that a unital \cstar-algebra $A$ is called \emph{$K_1$-injective} if the canonical map $\CU(A)/\CU_0(A)\to K_1(A)$ is injective.
The following serves as our main criterion for $K_1$-injectivity of properly infinite \cstar-algebras.

\begin{lemma} \label{lem:K1-inj-criterion}
Let $D$ be a properly infinite unital \cstar-algebra.
The following are equivalent:
\begin{enumerate}[label=\textup{(\roman*)},leftmargin=*]
	\item $D$ is $K_1$-injective. \label{lem:K1-inj-criterion:1}
	\item For every pair of splitting projections $p,q\in D$, one has $p\sim_{\MvN} q$ if and only if $p\sim_h q$. \label{lem:K1-inj-criterion:2}
	\item There exists some splitting projection $p\in D$ such that for every splitting projection $q\in D$, one has $p\sim_{\MvN} q$ if and only if $p\sim_h q$. \label{lem:K1-inj-criterion:3}
	\end{enumerate}
\end{lemma}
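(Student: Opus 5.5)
The natural route is the standard correspondence between unitaries in a properly infinite unital \cstar-algebra and pairs of splitting projections. I would prove \ref{lem:K1-inj-criterion:1}$\Rightarrow$\ref{lem:K1-inj-criterion:2}, note that \ref{lem:K1-inj-criterion:2}$\Rightarrow$\ref{lem:K1-inj-criterion:3} is trivial, and then prove \ref{lem:K1-inj-criterion:3}$\Rightarrow$\ref{lem:K1-inj-criterion:1}. In each of \ref{lem:K1-inj-criterion:2} and \ref{lem:K1-inj-criterion:3}, the implication $p\sim_h q\Rightarrow p\sim_\MvN q$ is automatic, since homotopic projections are unitarily equivalent via a unitary in $\CU_0(D)$. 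So only the converse direction needs attention.

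\textbf{(i)$\Rightarrow$(ii).} Let $p,q$ be splitting projections with $p=v^*v$ and $q=vv^*$. Since $p$ and $q$ are splitting, $\eins-p$ and $\eins-q$ both contain the range of an isometry. Hence they are properly infinite and full, and in particular $\eins-p\sim_\MvN\eins-q$ by Cuntz's results (\autoref{rem:Cuntz}): both represent $[\eins]_0-[p]_0$. Choose a partial isometry $w$ with $w^*w=\eins-p$ and $ww^*=\eins-q$, and set $u=v+w$. This is a unitary with $upu^*=q$.

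If $u$ can be chosen in $\CU_0(D)$, we are done. We may alter $w$ by any unitary $z$ in the corner $(\eins-q)D(\eins-q)$, replacing $w$ by $zw$. That corner is properly infinite because $\eins-q$ is splitting, so $K_1$ of the corner surjects from its unitaries, and it maps into $K_1(D)$ via the full embedding. Because this is an isomorphism by fullness, we can choose $z$ so that $[zw+v]_1=0$ in $K_1(D)$. Then $K_1$-injectivity gives $u\in\CU_0(D)$, and therefore $q=upu^*\sim_h p$.

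\textbf{(iii)$\Rightarrow$(i).} Let $u\in\CU(D)$ with $[u]_1=0$; we need $u\in\CU_0(D)$. Take the fixed splitting projection $p$ from \ref{lem:K1-inj-criterion:3} and set $q=upu^*$. Then $q$ is splitting and $q\sim_\MvN p$, so by hypothesis $q\sim_h p$. Hence there is $w\in\CU_0(D)$ with $wqw^*=p$, so $u':=wu$ commutes with $p$ and $[u']_1=0$. Write $u'=u'p+u'(\eins-p)$ as a sum of unitaries in the two corners $pDp$ and $(\eins-p)D(\eins-p)$. Both corners are properly infinite and full, so each corner's unitary group modulo its connected component maps into $K_1(D)$.

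\textbf{Main obstacle.} The difficulty is to show $u'\in\CU_0(D)$ without already knowing $K_1$-injectivity. The first thing I would try is the following trick. Using an isometry $s$ with $ss^*\le \eins-p$, one can move $u'p$ into $\eins-p$: the unitary $u'p\oplus(u'p)^*$ is connected to $\eins$ inside the properly infinite algebra by the standard rotation. It then suffices to handle a unitary of the form $\eins_p\oplus x$ with $x$ a unitary in $(\eins-p)D(\eins-p)$ and $[x]_1=0$.

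I would then apply the hypothesis once more, with a different projection. Take $s'$ an isometry onto a subprojection of $\eins-p$, and compare $p$ with the projection $p+$ (something twisted by $x$), so that the homotopy supplied by \ref{lem:K1-inj-criterion:3} absorbs $x$. The precise way to encode $x$ as a splitting projection (graph-projection style, $\tfrac12\matrix{1 & x^*\\ x & 1}$ inside a $2\times2$ corner obtained via isometries) is where I expect the real work lies. I would model this step on the argument in \cite{BlanchardRohdeRordam08}.
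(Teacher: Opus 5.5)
There is a genuine gap: the implication (iii)$\Rightarrow$(i) is not proved. Your proof of (i)$\Rightarrow$(ii) is correct; the paper simply cites Blanchard--Rohde--R\o rdam for it. The first half of (iii)$\Rightarrow$(i) is also fine. Conjugating $upu^*$ back onto $p$ by a unitary in $\CU_0(D)$ works. So does rotating the $p$-corner part into $\eins-p$, using the partial isometry $sp$, which has source $p$ and range below $\eins-p$. Together these correctly reduce the problem to showing $p+x\in\CU_0(D)$ for a unitary $x$ of $(\eins-p)D(\eins-p)$ with $[p+x]_1=0$.

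That last step, however, is the entire content of (iii)$\Rightarrow$(i), and it is the only direction the paper actually uses, in the proof of \autoref{thm:K1-injectivity}. You leave it open. The graph-projection plan is not yet an argument. Hypothesis (iii) only ever gives you some $w\in\CU_0(D)$ conjugating one projection onto another, and you do not say how a conclusion about $x$ would be extracted from such a $w$.

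The missing idea is to apply (iii) a second time, to the range projection of a compressed null-homotopy, and then absorb the leftover conjugation using normality of $\CU_0(D)$.

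First, (iii) also holds with $\eins-p$ in place of $p$. Indeed, if $q\sim_{\MvN}\eins-p$ is splitting, then $\eins-q$ is splitting with $[\eins-q]_0=[p]_0$, so $\eins-q\sim_h p$.

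Since $\eins-p$ is properly infinite and full, $[x]=0$ in $K_1((\eins-p)D(\eins-p))$. Hence for some $n$, $x\oplus(\eins-p)\oplus\dots\oplus(\eins-p)$ is connected to the unit in $\CU(M_{n+1}((\eins-p)D(\eins-p)))$.

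Pick partial isometries $t_1,\dots,t_{n+1}\in(\eins-p)D(\eins-p)$ with $t_i^*t_i=\eins-p$ and pairwise orthogonal ranges. Applying $(a_{ij})\mapsto\sum_{i,j}t_ia_{ij}t_j^*$ to this homotopy gives $y:=(\eins-e)+t_1xt_1^*\in\CU_0(D)$, where $e=t_1t_1^*$.

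The projection $e$ is splitting. It contains the range of $t_1t$ for an isometry $t$ with $tt^*\le\eins-p$, and $\eins-e\ge p$. Since also $e\sim_{\MvN}\eins-p$, (iii) gives $w\in\CU_0(D)$ with $wew^*=\eins-p$.

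Then $c:=wt_1$ is a unitary of $(\eins-p)D(\eins-p)$. Moreover $wyw^*=p+cxc^*=(p+c)(p+x)(p+c)^*$, and this lies in $\CU_0(D)$. Since $\CU_0(D)$ is normal in $\CU(D)$, it follows that $p+x\in\CU_0(D)$, which completes the proof.
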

\begin{proof}
The implication \ref{lem:K1-inj-criterion:2}$\Rightarrow$\ref{lem:K1-inj-criterion:3} is tautological.
The equivalence \ref{lem:K1-inj-criterion:1}$\Leftrightarrow$\ref{lem:K1-inj-criterion:2} is part of \cite[Proposition 5.1]{BlanchardRohdeRordam08}, but a careful reading of its proof reveals that the argument given there actually proves \ref{lem:K1-inj-criterion:3}$\Rightarrow$\ref{lem:K1-inj-criterion:1}.
\end{proof}

The statement below is the consequence of a standard functional calculus argument.
We shall omit its easy proof.

\begin{prop} \label{prop:universal-constants}
For every $\eps>0$, there exists a constant $\delta>0$ with the following property.
Let $A$ be any \cstar-algebra and $a\in A$ a positive contraction.
\begin{enumerate}[leftmargin=5mm,label={$\bullet$}]
\item If $x\in A$ is any contraction with $\|[x,a]\|<\delta$, then $\|[x,\sqrt{a}]\|<\eps$.
\item If $\alpha\in\Aut(A)$ is any automorphism with $\|a-\alpha(a)\|<\delta$, then $\|\sqrt{a}-\alpha(\sqrt{a})\|<\eps$.
\end{enumerate}
\end{prop}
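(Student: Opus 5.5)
We reduce both assertions to norm estimates on the polynomial approximations to the square root. For $\eps>0$, choose a polynomial $f(t)=\sum_{k=0}^m c_kt^k$ with $\sup_{t\in[0,1]}|f(t)-\sqrt{t}|<\eps/3$, which exists by Weierstrass' theorem. The crucial point is that $f$ depends only on $\eps$, not on $A$ or $a$. Set $C=\sum_{k=1}^m k|c_k|$ and $\delta=\eps/(3\max(C,1))$.

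For the first bullet, let $x$ be a contraction and $a$ a positive contraction with $\|[x,a]\|<\delta$. The Leibniz rule gives
\[
[x,a^k]=\sum_{j=0}^{k-1}a^j[x,a]a^{k-1-j},
\]
so $\|[x,a^k]\|\le k\|[x,a]\|$. Summing over $k$ yields $\|[x,f(a)]\|\le C\delta\le\eps/3$. Since $\sigma(a)\subseteq[0,1]$, the continuous functional calculus gives $\|f(a)-\sqrt a\|<\eps/3$. Therefore
\[
\|[x,\sqrt a]\|\le \|[x,f(a)]\|+2\|x\|\,\|f(a)-\sqrt a\|<\eps/3+2\eps/3=\eps.
\]
If $A$ is non-unital, the constant term $c_0$ should be interpreted in $A^\dagger$; it commutes with $x$ and therefore contributes nothing to the commutator.

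For the second bullet, $\alpha$ is a $*$-automorphism, so $\alpha(f(a))=f(\alpha(a))$ and $\alpha(\sqrt a)=\sqrt{\alpha(a)}$ (extend $\alpha$ to $A^\dagger$ if needed). Moreover, $\alpha(a)$ is again a positive contraction. The telescoping identity
\[
a^k-b^k=\sum_{j=0}^{k-1}a^j(a-b)b^{k-1-j}
\]
with $b=\alpha(a)$ gives $\|f(a)-f(\alpha(a))\|\le C\|a-\alpha(a)\|<\eps/3$. Hence
\[
\|\sqrt a-\alpha(\sqrt a)\|\le\|\sqrt a-f(a)\|+\|f(a)-f(\alpha(a))\|+\|f(\alpha(a))-\sqrt{\alpha(a)}\|<\eps.
\]

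There is no real obstacle here. The only point requiring care is to confirm that $\delta$ depends on $\eps$ alone, which holds because $f$ and $C$ are chosen independently of $A$ and $a$, and the bounds use only the spectral inclusion in $[0,1]$ and the contractivity of $x$.
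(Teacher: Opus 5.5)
Your proof is correct. The paper omits the proof, calling it a ``standard functional calculus argument'', and your approach is exactly that: you approximate $\sqrt{t}$ uniformly on $[0,1]$ by a polynomial, use the estimates $\|[x,a^k]\|\le k\|[x,a]\|$ and $\|a^k-b^k\|\le k\|a-b\|$, and obtain a $\delta$ that depends only on $\eps$.
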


The following is the main technical result of this section.
A special case of it, stated after it, is the main ingredient to obtain \autoref{theorem:A}, and its general form enables many of the results in the rest of the article.

\begin{theorem} \label{thm:K1-injectivity}
Let $\alpha: G\curvearrowright A$ be an action on a separable \cstar-algebra and $\beta: G\curvearrowright B$ a strongly stable action on a $\sigma$-unital \cstar-algebra.
Let $(\phi,\Iu): (A,\alpha)\to(\CM(B),\beta)$ be a cocycle representation that contains itself at infinity.
Let $\CE$ be a unital \cstar-algebra for which one of the following holds:
	\begin{enumerate}[label=\textup{(\arabic*)},leftmargin=*]
	\item There exists an ideal $\CJ\triangleleft \SD_{(\phi,\Iu)}^w$ such that $\CE\cong \SD_{(\phi,\Iu)}^w/\CJ$.
	\item There exists an ideal $\CJ\triangleleft \SD_{(\phi,\Iu)}$ such that $\CE\cong \SD_{(\phi,\Iu)}/\CJ$.
	\end{enumerate}
Then $\CE$ is $K_1$-injective.
\end{theorem}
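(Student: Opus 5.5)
By \autoref{rem:D-infinite-repeat}, we may replace $(\phi,\Iu)$ by its infinite repeat, which only changes $\SD_{(\phi,\Iu)}$ and $\SD^w_{(\phi,\Iu)}$ up to isomorphism. So we may assume there are isometries $r_n\in(\CM(B)\cap\phi(A)'\cap\{\Iu_g\}_{g}')^{\beta}$ with $\sum_n r_nr_n^*=\eins$ strictly (\autoref{prop:infinite-repeat}). All such $r_n$ lie in $\SD_{(\phi,\Iu)}\subseteq\SD^w_{(\phi,\Iu)}$, and they are even fixed by $\beta^\Iu$. Hence both algebras, and every unital quotient $\CE$ of them, contain a unital copy of $\CO_2$ (spanned by $r_1$ and $r_\infty=\sum_n r_{n+1}r_n^*$). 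So $\CE$ is properly infinite and in Cuntz standard form.

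We verify criterion \ref{lem:K1-inj-criterion:3} of \autoref{lem:K1-inj-criterion} with $p=\pi(r_1r_1^*)$, where $\pi$ is the quotient map onto $\CE$. Let $q\in\CE$ be a splitting projection with $q\sim_\MvN p$. Since $[p]_0=0$ (indeed $r_1r_1^*\sim\eins-r_1r_1^*\sim\eins$ via the $r_n$), also $[q]_0=0$. By \autoref{lem:Cuntz-Higson} it therefore suffices to find isometries $s_1,s_2\in\CE$ with $s_1s_1^*\le p$, $s_2s_2^*\le q$ and $\|s_1^*s_2\|<1$. We take $s_2=\pi(y)$ for an isometry with range under $q$, which exists since $q$ is splitting. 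Lifting to $\SD$ is not straightforward, because projections and isometries need not lift. Instead we work with an arbitrary lift $y\in\SD$ of an isometry $t\in\CE$ with $tt^*\le q$, and correct it downstairs in the quotient.

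The key step, in the spirit of Cuntz--Higson, is to produce an isometry $s_1$ under $p$ that is almost orthogonal to $t$. We apply \autoref{lem:Kasparov} to the separable algebra $D$ generated by $\phi(A)$, the lift $y$, the $r_n$, and $\Iu_g$ for $g$ in a dense countable set. This gives a quasicentral, approximately $\beta$-invariant approximate unit $h_k$ of $B$ with $h_{k+1}h_k=h_k$. Fix $x=r_1$. Choose a rapidly increasing subsequence $k_1<k_2<\cdots$ and form a strictly convergent sum
\[
s=\sum_{m} r_{j_m}\, x\,(f_m)
\]
where $f_m=\sqrt{h_{k_{m+1}}-h_{k_m}}$ are quasicentral "bumps" with $\sum_m f_m^2=\eins$, and the indices $j_m$ are distinct. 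The idea is to spread $r_1$ across the block decomposition by $(r_n)$ so that its range moves to infinity relative to the fixed element $y$.

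Quasicentrality, together with \autoref{prop:universal-constants} to pass from $h$ to square roots, shows that the commutators $[s,\phi(a)]$, as well as $s-\beta^\Iu_g(s)$ (or $\phi(a)(s-\beta^\Iu_g(s))$ in the weak case), are norm-convergent sums of elements of $B$ with summable norms. Hence $s\in\SD$. Moreover $s^*s-\eins\in B$ approximately, and we can arrange $s^*s=\eins$ modulo the ideal $B\cap\SD$. The range of $s$ can be arranged to lie under $r_1r_1^*$ after conjugating by $r_1$. Modulo $B$, the element $y^*s$ is a sum of terms $y^*r_{j_m}x f_m$, and by choosing $j_m$ and $k_m$ inductively one makes $\|y^* s\|$ small modulo $B$.

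The main obstacle is that $B$ need not be contained in $\CJ$, so "modulo $B$" estimates do not directly give norm estimates in $\CE$. To handle this, I would first replace $s$ by a tail $\sum_{m\ge M}$, so that the error that lies in $B$ is killed against $y$ by the approximate unit and $\|y^*s\|<1$ holds in norm, not just modulo $B$. Then I would polar-decompose the images in $\CE$ to obtain genuine isometries $s_1\le p$ and $s_2=t$ with $\|s_1^*s_2\|<1$. Making the estimates uniform for the weak version $\SD^w$ requires the same bookkeeping with $\phi(a)$ multiplied on the left.
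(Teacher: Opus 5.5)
Your preliminary reductions are sound: passing to an infinite repeat, the unital copy of $\CO_2$ in $\CE$, the choice $p=\pi(r_1r_1^*)$ with $[p]_0=0$, and the reduction to \autoref{lem:Cuntz-Higson} via criterion \ref{lem:K1-inj-criterion:3}. The key step, however, fails. You fix an \emph{arbitrary} isometry $t$ with $tt^*\le q$ and an arbitrary lift $y\in\SD$, and then look for an isometry $s_1$ under $p$ with $\|s_1^*t\|<1$. This is impossible in general. Take $q=p$ and $t=\pi(r_1)$: any isometry $s_1$ with $s_1s_1^*\le p=tt^*$ satisfies $s_1=tt^*s_1$, so $t^*s_1$ is itself an isometry and $\|t^*s_1\|=1$.

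The underlying reason is that pushing ranges to infinity with $r_{j_m}$ and bumps $f_m\in B$ only gives smallness against elements that are themselves localized. For $c\in B$ one has $\|cr_j\|\to0$ and $\|r_j^*c\|\to0$, but for an arbitrary multiplier $y$ nothing forces $\|y^*r_{j_m}xf_m\|$, or the tails of $y^*s$, to become small. So the claim that an inductive choice of $j_m,k_m$ makes $\|y^*s\|$ small is unfounded. Passing to a tail and polar decomposing does not fix this, and it adds a new problem: the tail has $s^*s=\eins-h_{k_M}$, which need not be invertible modulo $\CJ$ when $B\not\subseteq\CJ$.

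The missing idea is a \emph{structured} lift of $q$, together with a \emph{simultaneous} construction of both isometries. The paper proceeds as follows.
\begin{enumerate}
\item Take $P=\sum_n r_{2n}r_{2n}^*$. By Cuntz's result, $q=w(\eins-p)w^*$ for some unitary $w\in\CE$. Lift $w$ to a contraction $W\in\SD$ and set $Q=W\big(\sum_n r_{2n-1}r_{2n-1}^*\big)W^*$.
\item Choose interleaved indices $m_1<n_1<m_2<n_2<\cdots$ so that the localized pieces $f_k=r_{m_k}e_{m_k}r_{m_k}^*$ and $h_j=Wr_{n_j}e_{n_j}r_{n_j}^*W^*$ satisfy $\|f_kh_j\|<2^{-(j+k)}$. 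This is possible exactly because $e_mr_m^*W$ and $Wr_ne_n$ lie in $B$.
\item Set $R_1=\sum_k r_{m_k}a_k$ and $R_2=W\sum_k r_{n_k}b_k$, built from quasicentral bumps so both lie in $\SD$. Then $\FF R_1=R_1$, $\FH R_2\equiv_{\CJ}R_2$ and $R_2^*R_2\equiv_{\CJ}\eins$.
\item Hence $\|s_1^*s_2\|\le\|\FF\FH\|<1$. This is a norm estimate in $\CM(B)$, so it survives in every quotient and removes the $B$-versus-$\CJ$ issue entirely.
\end{enumerate}
Alternatively, you could build $s_2$ in the same localized way from a lift $V\in\SD$ of your partial isometry implementing $q\sim_{\MvN}p$, using $Vr_1\sum_k r_{n_k}b_k$ against $r_1\sum_k r_{m_k}a_k$. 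Either way, some such concrete description of $q$ is indispensable.
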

\begin{proof}
By \autoref{rem:D-infinite-repeat}, we may assume without loss of generality that $(\phi,\Iu)$ is an infinite repeat.
By \autoref{prop:infinite-repeat}, we may assume that there exists a sequence of isometries $r_n\in (\CM(B)\cap\phi(A)')^{\beta^\Iu}$ with $\eins=\sum_{n=1}^\infty r_n r_n^*$.
Depending on which case is proved, we write either $\SD=\SD_{(\phi,\Iu)}$ or $\SD=\SD_{(\phi,\Iu)}^w$, so that $\CE=\SD/\CJ$ in both cases.
We assume that $\CE$ is non-zero, else there is nothing to prove.
We write $\pi_\CJ: \SD\to\CE$ for the quotient map.

We consider $p=\pi_\CJ(P)$ for $P=\sum_{n=1}^\infty r_{2n}r_{2n}^*\in(\CM(B)\cap\phi(A)')^{\beta^\Iu}\subseteq\SD$, which defines a splitting projection in $\CE$ with vanishing $K_0$-class (as $P$ is already such an element in $\SD$).
Let $q\in\CE$ be an arbitrary splitting projection with vanishing $K_0$-class.
By \autoref{lem:K1-inj-criterion}, the claim amounts to showing that $p$ and $q$ are homotopic.
By \autoref{rem:Cuntz}, $q$ is uniquely determined modulo unitary equivalence, thus there exists a unitary $w\in\CE$ such that $w(\eins-p)w^*=q$.
Let $W\in\SD$ be a contraction with $w=\pi_\CJ(W)$.
Set $Q=W(\eins-P)W^* = W\big( \sum_{n=1}^\infty  r_{2n-1} r_{2n-1}^*\big) W^*$, which then satisfies $q=\pi_\CJ(Q)$.

We apply \autoref{lem:Kasparov} and choose an increasing approximate unit $(e_n)_{n\geq 1}$ in $B$ with $e_{n+1}e_n=e_n$ that is $\phi(A)$-quasicentral and approximately $\beta^\Iu$-invariant.
Let $K_n\subseteq G$ be an increasing sequence of compact sets with union $G$ and $F_n\subseteq A$ an increasing sequence of finite subsets of contractions whose union is dense in the unit ball of $A$.
For every $n\geq 1$, we apply \autoref{prop:universal-constants} to find a universal constant $\delta_n>0$ that satisfies the conclusion therein for $2^{-(n+2)}$ in place of $\eps$.
We assume $\delta_n\leq 2^{-n}$ for each $n\geq 1$ and that the sequence $(\delta_n)_n$ is decreasing.
We begin by choosing an even number $m_1\geq 1$ with 
\[
\max_{g\in K_1} \|e_n-\beta_g^\Iu(e_n)\| + \max_{a\in F_1} \|[e_n,\phi(a)]\|< \delta_1/2\ , \quad n\geq m_1.
\]
Set $f_1=r_{m_1} e_{m_1} r_{m_1}^*$.
As $r_nr_n^*\to 0$ strictly as $n\to\infty$, we may find an odd number $n_1> m_1$ such that $\|e_{m_1}r_{m_1}^*Wr_{n_1}\|<\frac14$ and set $h_1=Wr_{n_1}e_{n_1}r_{n_1}^*W^*$.
Next, choose an even number $m_2> n_1$ with
\[
\max_{g\in K_2} \|e_n-\beta_g^\Iu(e_n)\| + \max_{a\in F_2} \|[e_n,\phi(a)]\| < \delta_2/2 ,\quad n\geq m_2,
\]
and
\[
\|r_n^*Wr_{n_1}e_{n_1}\|<\frac18,\quad n\geq m_2.
\]
Set $f_2=r_{m_2}e_{m_2}r_{m_2}^*$.
Then find an odd number $n_2> m_2$ such that 
\[
\|e_{m_1}r_{m_1}^*Wr_{n_2}\|<\frac{1}{8},\quad \|e_{m_2}r_{m_2}^*Wr_{n_2}\|<\frac{1}{16},
\] 
and set $h_2=Wr_{n_2}e_{n_2}r_{n_2}^*W^*$.

By repeating these steps inductively, we end up with increasing sequences of (alternatingly even and odd) natural numbers $m_1< n_1< m_2< n_2<\dots$ such that
\begin{equation} \label{eq:properties-en}
\sup_{n\geq m_k}\Big( \max_{g\in K_k} \|e_n-\beta_g^\Iu(e_n)\| + \max_{a\in F_k} \|[e_n,\phi(a)]\| \Big) < \delta_k/2,\quad k\geq 1, 
\end{equation}
and the positive contractions
\[
f_k=r_{m_k} e_{m_k} r_{m_k}^*,\quad h_k=Wr_{n_k} e_{n_k} r_{n_k}^*W^*
\]
satisfy the inequality $\|f_k h_j\|<2^{-(j+k)}$ for all $j,k\geq 1$.

Consider the strict limits $\FF=\sum_{k=1}^\infty f_k$ and $\FH=\sum_{k=1}^\infty h_k$.
Clearly $\FF\leq P$ and $\FH\leq Q$ and the above inequality implies 
\begin{equation} \label{eq:product-FH}
\|\FF\cdot\FH\|\leq\sum_{j,k=1}^\infty \|f_k h_j\|<\sum_{j,k=1}^\infty 2^{-(j+k)}=1.
\end{equation}
%%%
\begin{comment}
As a consequence of \eqref{eq:properties-en}, we observe for all $a\in A$
\[
[\phi(a),\FG] = \sum_{k=1}^\infty r_{m_k}[\phi(a),e_{m_k}]r_{m_k}^* \in B
\]
and, since $[W,\phi(A)]\subseteq B$, also
\[
\begin{array}{ccl}
[\phi(a),\FH] &=& \dst \phi(a)W\Big(\sum_{k=1}^\infty r_{n_k} e_{n_k} r_{n_k}^*\Big) W^*-W\Big(\sum_{k=1}^\infty r_{n_k} e_{n_k} r_{n_k}^*\Big) W^*\phi(a) \\
&\equiv& \dst W\Big(\sum_{k=1}^\infty r_{n_k}[\phi(a),e_{n_k}]r_{n_k}^* \Big)W^* \in B.
\end{array}
\]
Next we clearly have $\FG \in \CM^{\beta^\Iu}(B)$ and hence $\FG\in D_{(\phi,\Iu)}\subseteq\CE'$.
If $\CE'=D_{(\phi,\Iu)}\ni W$, then we observe for all $g\in G$ that
\[
\begin{array}{ccl}
\FH-\beta_g^\Iu(\FH) &=& \dst \sum_{k=1}^\infty Wr_{n_k} e_{n_k} r_{n_k}^* W - \beta_g^\Iu( Wr_{n_k} e_{n_k} r_{n_k}^* W ) \\
&\equiv& \dst W\Big( \sum_{k=1}^\infty r_{n_k} (e_{n_k}-\beta_g^\Iu(e_{n_k}) ) r_{n_k}^* \Big)W^* \in B. 
\end{array}
\]
If however $\CE'=D^w_{(\phi,\Iu)}\ni W$, then we observe for all $a\in A$ and $g\in G$ that
\[
\begin{array}{cl}
\multicolumn{2}{l}{ \phi(a)(\FH-\beta_g^\Iu(\FH))\phi(a)^* }\\
=& \dst \phi(a)\Big( \sum_{k=1}^\infty Wr_{n_k} e_{n_k} r_{n_k}^* W^* - \beta_g^\Iu( Wr_{n_k} e_{n_k} r_{n_k}^* W^* ) \Big)\phi(a) \\
\equiv& \dst \phi(a)W\Big( \sum_{k=1}^\infty r_{n_k} (e_{n_k}-\beta_g^\Iu(e_{n_k}) ) r_{n_k}^* \Big)W^*\phi(a)^* \in B. 
\end{array}
\]
With \autoref{lemma:weak-fixed-point-property}, we conclude $\FH\in\CE$.
\end{comment}
%%%

For notational convenience, set $e_0:=0=:e_{-1}$ and $m_0=n_0=0=m_{-1}=n_{-1}$.
For $k\geq 1$, define $a_k=(e_{m_{k-1}}-e_{m_{k-2}})^{1/2}$ and $b_k=(e_{n_{k-1}}-e_{n_{k-2}})^{1/2}$.
As $(e_n)_n$ was an approximate unit, note that
\[
\sum_{k=1}^\infty a_k^2 = \lim_{\ell\to\infty} e_{m_{\ell-1}}= \eins = \lim_{\ell\to\infty} e_{n_{\ell-1}}= \sum_{k=1}^\infty b_k^2.
\]
By condition \eqref{eq:properties-en}, we have for all $k\geq 3$ that
\[
\max_{g\in K_{k-2}} \|(e_{m_{k-1}}-e_{m_{k-2}})-\beta_g^\Iu(e_{m_{k-1}}-e_{m_{k-2}})\| \leq \delta_{k-2}
\]
and
\[
\max_{a\in F_{k-2}} \|[(e_{m_{k-1}}-e_{m_{k-2}}),\phi(a)]\|\leq \delta_{k-2}.
\]
By our choice of the constants $\delta_k$, it follows for all $k\geq 3$ that
\begin{equation} \label{eq:properties-ak}
\max_{g\in K_{k-2}} \|a_k-\beta_g^\Iu(a_k)\| \leq 2^{-k},\quad \max_{a\in F_{k-2}} \|[a_k,\phi(a)]\|\leq 2^{-k}.
\end{equation}
Analogously, we get for all $k\geq 3$ that
\begin{equation} \label{eq:properties-bk}
\max_{g\in K_{k-2}} \|b_k-\beta_g^\Iu(b_k)\| \leq 2^{-k},\quad \max_{a\in F_{k-2}} \|[b_k,\phi(a)]\|\leq 2^{-k}.
\end{equation}
Consider the strict limit $R_1=\sum_{k=1}^\infty r_{m_k} a_k$.
It is an isometry since
\[
R_1^*R_1=\sum_{k,j=1}^\infty a_k r_{m_k}^* r_{m_j} a_j = \sum_{k=1}^\infty a_k^2 = \eins.
\]
We also have
\[
\FF R_1 = \sum_{k,j=1}^\infty r_{m_j} e_{m_j} r_{m_j}^* r_{m_k} a_k = \sum_{k=1}^\infty r_{m_k} \underbrace{e_{m_k} a_k}_{=a_k} = R_1.
\]
Condition \eqref{eq:properties-ak} implies for all $a\in\bigcup_n F_n$ that
\[
[R_1,\phi(a)]=\sum_{k=1}^\infty r_{m_k} [a_k,\phi(a)] \in B
\]
and for all $g\in G$ that
\[
R_1-\beta^\Iu_g(R_1)=\sum_{k=1}^\infty r_{m_k} \big( a_k - \beta^\Iu_g(a_k) \big) \in B,
\]
and hence $R_1\in \SD_{(\phi,\Iu)}\subseteq\SD$.

Likewise we can consider the strict limit $R_2'=\sum_{k=1}^\infty r_{n_k} b_k$ and $R_2=WR_2'$.
As above, condition \eqref{eq:properties-bk} implies $R_2'\in \SD_{(\phi,\Iu)}\subseteq\SD$ and thus $R_2\in\SD$.
Exactly as above it follows that $R_2'$ is an isometry and hence
\[
R_2^*R_2-\eins={R_2'}^{*}(W^*W-\eins)R_2' \in \CJ. 
\]
Here we used that $W$ was chosen as a lift of the unitary $w\in\CE$.
Furthermore,
\[
\begin{array}{cll}
\FH R_2 &=& W\Big(\sum_{k=1}^\infty r_{n_k} e_{n_k} r_{n_k}^*\Big)W^*W  R_2' \\
&\equiv_\CJ& W\Big(\sum_{k=1}^\infty r_{n_k} e_{n_k} r_{n_k}^*\Big)R_2' \\
&=& W\sum_{k,j=1}^\infty r_{n_j} e_{n_j} r_{n_j}^* r_{n_k} b_k \\
&=& W\sum_{k=1}^\infty r_{n_k} \underbrace{e_{n_k} b_k}_{=b_k} \\
&=& WR_2' \ = \ R_2.
\end{array}
\]
To summarize, if we set $s_j=\pi_\CJ(R_j)$ for $j=1,2$, then we have found two isometries $s_1,s_2\in\CE$ with 
\[
s_1^*s_1^* \leq \pi_\CJ(\FF) \leq \pi_\CJ(P)=p,\quad s_2s_2^*\leq\pi_\CJ(\FH)\leq \pi_\CJ(Q)= q,
\]
and
\[
\|s_1^*s_2\|=\|\pi_\CJ(R_1^*R_2)\|=\|\pi_\CJ(R_1^*\FF\FH R_2)\| \leq\|R_1^*\FF\FH R_2\|\leq\|\FF\FH\|\stackrel{\eqref{eq:product-FH}}{<}1.
\]
With \autoref{lem:Cuntz-Higson}, we may conclude that $p$ and $q$ are homotopic in $\CE$.
\end{proof}

We obtain the following as a consequence.
Note that the special case $A=0$ recovers \cite[Proposition 4.9]{GabeRuiz20}, the special case $B=\CK$ recovers \cite[Lemma 3(2)]{Paschke81}, and \cite[Theorem 2.5, 2.9]{LoreauxNg20}, \cite[Theorem 3.28]{LNS26} as well as \cite[Corollary 4.2]{HuaWhite26} are recovered as various other special cases.
The unitality condition written in brackets is an optional extra assumption one may add to the theorem in case $A$ is unital, so we obtain two versions of the statement below, with or without units.
In particular we also obtain \autoref{theorem:B}.

\begin{cor} \label{cor:Paschke-dual-K1-injective}
Let $A$ be a separable \cstar-algebra and $B$ a $\sigma$-unital stable \cstar-algebra.
Let $\phi: A\to\CM(B)$ be a (unitally) absorbing representation and let $\bar{\phi}: A\to\CQ(B)=\CM(B)/B$ be its induced homomorphism to the corona algebra.
Then the relative commutant $\CQ(B)\cap\bar{\phi}(A)'$ is $K_1$-injective.
The same conclusion holds if $\phi$ is instead assumed to be weakly nuclear and nuclearly (unitally) absorbing.
\end{cor}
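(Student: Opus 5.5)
The plan is to read off the corollary from \autoref{thm:K1-injectivity} in the special case $G=\{1\}$. The first observation is that $\CQ(B)\cap\bar{\phi}(A)'$ is precisely the image of $\SD_\phi$ under the quotient map $\CM(B)\to\CQ(B)$. A multiplier $x$ has image in the relative commutant exactly when $[x,\phi(A)]\subseteq B$. Conversely, every element of $\CQ(B)\cap\bar{\phi}(A)'$ lifts to some $x\in\CM(B)$, and any such lift automatically lies in $\SD_\phi$. Hence
\[
\CQ(B)\cap\bar{\phi}(A)'\cong \SD_\phi/B,
\]
with $B\triangleleft\SD_\phi$ an ideal, since $B$ is an ideal in $\CM(B)$ contained in $\SD_\phi$. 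For the trivial group one has $\SD_\phi=\SD_{(\phi,\eins)}$, so we are in case (2) of \autoref{thm:K1-injectivity} with $\CJ=B$.

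It remains to check the standing hypotheses of \autoref{thm:K1-injectivity}. Stability of $B$ is the same as strong stability of the trivial action, and the theorem must be applied with trivial actions and the trivial cocycle. The substantive hypothesis is that $\phi$ contains itself at infinity. By \autoref{prop:almost-infinite-repeat}, this is equivalent to $\phi$ (weakly) absorbing its own infinite repeat $\phi^\infty$.
\begin{enumerate}[leftmargin=5mm,label={$\bullet$}]
\item If $\phi$ is absorbing, then $\phi$ absorbs every representation, in particular $\phi^\infty$, and the hypothesis holds.
\item In the unitally absorbing case ($A$ unital, $\phi$ unital, absorbing all unital representations), $\phi^\infty$ is again unital, because $\sum_n t_nt_n^*=\eins$. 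So $\phi$ absorbs $\phi^\infty$ and the hypothesis holds again.
\item In the nuclear variant, $\phi$ is weakly nuclear and absorbs all weakly nuclear (unital) representations. Here I need $\phi^\infty$ to be weakly nuclear. This holds because $b^*\phi^\infty(\cdot)b$ is a norm-limit of finite sums $\sum_n b^*t_n\phi(\cdot)t_n^*b$, and each summand $b_n^*\phi(\cdot)b_n$ with $b_n=t_n^*b\in B$ is nuclear. Since nuclear maps are closed under finite sums and point-norm limits, $\phi^\infty$ is weakly nuclear, and then $\phi$ absorbs it.
\end{enumerate}
One should double-check that the notion of ``absorbs'' used for unital or nuclear absorption yields at least weak absorption of $\phi^\infty$ in the sense of \autoref{def:absorption}(ii). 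This is immediate, since asymptotic unitary equivalence implies unitary equivalence modulo $B$ via any $U_t$.

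Applying \autoref{thm:K1-injectivity} then shows that $\SD_\phi/B$ is $K_1$-injective, which gives the claim. The only step needing genuine care is the nuclear case, namely that the infinite repeat stays in the class being absorbed. The rest is bookkeeping identifying the relative commutant with a quotient of $\SD_\phi$.
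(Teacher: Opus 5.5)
Your proposal is correct and matches the paper's proof: apply \autoref{thm:K1-injectivity} with $G=\{1\}$ and $\CJ=B$, identify $\CQ(B)\cap\bar{\phi}(A)'$ with $\SD_\phi/B$, and obtain the ``contains itself at infinity'' hypothesis from $\phi$ absorbing $\phi^\infty$ via \autoref{prop:almost-infinite-repeat}. Your additional checks fill in details the paper only asserts: that $\phi^\infty$ is unital in the unital case, and that $\phi^\infty$ is weakly nuclear as a point-norm limit of finite sums of nuclear maps in the nuclear case.
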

\begin{proof}
We apply \autoref{thm:K1-injectivity} to the case $G=\{1\}$ with trivial actions on $A$ and $B$.
Since $\phi$ is either a (unitally) absorbing representation or is weakly nuclear and nuclearly (unitally) absorbing, it absorbs its infinite repeat $\phi^\infty$, so it contains itself at infinity.
If one keeps in mind that $B$ sits in $\SD_\phi$ as an ideal and compares definitions, one has $\CQ(B)\cap\bar{\phi}(A)'=\SD_\phi/B$.
So if we apply the conclusion of \autoref{thm:K1-injectivity} with $\CJ=B$, the claim follows.
\end{proof}

The above result implies a positive solution to the $KK$-uniqueness problem for \cstar-algebras posed in recent work of Carrión et al \cite[Question 5.17]{CGSTW23}; see also \cite[Theorem 2.6]{LNS26} or the discussion following \cite[Problem LXII]{SchafhauserTikuisisWhite25}.
We record the resulting theorem here, whose separable version implies \autoref{theorem:A}.
Despite outsourcing the justification of the theorem to these references for the time being, we note that the subsequent sections will lead to a self-contained proof of \autoref{theorem:C}, which is strictly more general than \ref{thm:original-KK-uniqueness:1} below.
Since the proof of \cite[Theorem 5.15]{CGSTW23} makes it is clear that the $KL$-uniqueness theorem (\ref{thm:original-KK-uniqueness:2} below) can be reduced to \ref{thm:original-KK-uniqueness:1}, we omit a separate proof and shall not introduce or discuss $KL$-theory in the rest of the article.

\begin{theorem} \label{thm:original-KK-uniqueness}
Let $A$ be a separable \cstar-algebra and $B$ a $\sigma$-unital stable \cstar-algebra.
Let $\phi,\psi: A\to\CM(B)$ be two absorbing $*$-homomorphisms that form a Cuntz pair.
	\begin{enumerate}[label=\textup{(\roman*)},leftmargin=*]
	\item \label{thm:original-KK-uniqueness:1}
	If $[\phi,\psi]=0$ in $KK(A,B)$, then $\phi$ and $\psi$ are strongly asymptotically unitarily equivalent. 
	\item \label{thm:original-KK-uniqueness:2}
	If $[\phi,\psi]=0$ in $KL(A,B)$, then there exists a sequence of unitaries $v_n\in\CU(\eins+B)$ such that
	\[
	\phi(a)=\lim_{n\to\infty} v_n\psi(a)v_n^*,\quad a\in A. 
	\]
	\end{enumerate}
\end{theorem}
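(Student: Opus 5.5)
The plan is to reduce part \ref{thm:original-KK-uniqueness:1} to a statement about $K_1(\SD_\phi)$, and then use \autoref{cor:Paschke-dual-K1-injective}.

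First, since $\phi$ and $\psi$ are both absorbing, each absorbs the other. Using the Cuntz pair condition, I would produce a unitary $U\in\CU(\CM(B))$ with $\psi(a)-U\phi(a)U^*\in B$ for all $a\in A$; the standard absorption argument gives this. Then $U\in\SD_\phi$ up to an adjustment. More precisely, consider $u=\bar U\in\CQ(B)\cap\bar\phi(A)'$. It lies in the commutant because $\bar\psi=\bar\phi$ and $\bar U\bar\phi\bar U^*=\bar\psi$. The Paschke-type duality (Thomsen, $KK(A,B)\cong K_1(\SD_\phi)\cong K_1(\SD_\phi/B)$, the latter via $K_*(\CM(B))=0$ and a six-term argument in the relative form) identifies $[\phi,\psi]$ with the class $[u]_1$, possibly up to sign and an absorbing correction. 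Hence $[\phi,\psi]=0$ gives $[u]_1=0$ in $K_1(\CQ(B)\cap\bar\phi(A)')$.

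Second, by \autoref{cor:Paschke-dual-K1-injective} the algebra $\CQ(B)\cap\bar\phi(A)'$ is $K_1$-injective. Hence $u\in\CU_0$, so there is a norm-continuous path $u_t$ in it from $u$ to $\eins$. I would lift this path through the quotient $\SD_\phi\to\SD_\phi/B$ to a unitary path. Unitaries in the identity component lift to unitaries, and multiplying by $U$ allows replacing $U$ by a unitary $V\in\CU(\CM(B))$ with $V\in\SD_\phi$ and $V\equiv_B\eins$. Thus $V\in\CU(\eins+B)$ and $\psi\equiv_B \ad(V)\circ\phi$ pointwise.

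Third, this reduces the problem to the case $\psi(a)-\phi(a)\in B$ with $\phi,\psi$ now being approximately conjugate via a unitary in $\eins+B$. I still need an asymptotic unitary path ending exactly at $\psi$ in the limit and starting at $\eins$. For this I would use a quasicentral approximate unit $e_t$ for $\phi(A)$ from \autoref{lem:Kasparov}, made continuous in $t$. One can then interpolate using the Cuntz pair $(\ad(V)\phi,\psi)$, which now has "$K_1$-trivial" difference. This needs a second application of the argument: realise $\ad(V)\phi$ and $\psi$ as agreeing modulo $B$, with their class in $KK$ still zero. Then upgrade the approximate agreement to asymptotic agreement by the Dadarlat--Eilers-style trick: paths $\exp(i t e_t h)$ with $h$ a self-adjoint logarithm, after another $K_1$-injectivity step inside $\SD_\phi$ where the unitary lies in $\CU_0(\eins+B)$. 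Finally I would connect $V$ to $\eins$ inside $\CU(\eins+B)$ and reparametrize so that $v_0=\eins$.

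The main obstacle is the third step: converting "unitarily equivalent modulo $B$ with $KK$-class zero" into genuine asymptotic unitary equivalence with a path in $\CU(\eins+B)$. I would first try to follow the discussion after \cite[Question 5.17]{CGSTW23} and \cite[Theorem 2.6]{LNS26}, which state precisely that $K_1$-injectivity of $\CQ(B)\cap\bar\phi(A)'$ suffices. With \autoref{cor:Paschke-dual-K1-injective} in hand, the theorem then follows by citation, and part \ref{thm:original-KK-uniqueness:2} follows from part \ref{thm:original-KK-uniqueness:1} as in \cite[Theorem 5.15]{CGSTW23}.
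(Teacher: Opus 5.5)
Your operative argument is exactly how the paper justifies this theorem: combine \autoref{cor:Paschke-dual-K1-injective} with the reduction ``$K_1$-injectivity of $\CQ(B)\cap\bar\phi(A)'$ suffices'' from the discussion after \cite[Question 5.17]{CGSTW23} and \cite[Theorem 2.6]{LNS26}, and reduce (ii) to (i) as in the proof of \cite[Theorem 5.15]{CGSTW23}. As a proof by citation it is fine.

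The self-contained sketch in your first three steps, however, would not work as written, so do not present it as part of the argument.

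First, a unitary $U$ with $\psi(a)-U\phi(a)U^*\in B$ carries no information about $[\phi,\psi]$, since $U=\eins$ already qualifies for a Cuntz pair. The unitary that represents the class is the initial point $U_1$ of the path $(U_t)_{t\geq 1}\subset\CU(\SD_\phi)$ furnished by $\phi\asymp\phi\oplus\psi\asymp\psi$. This gives $[\phi,\psi]=[\ad(U_1)\circ\psi,\psi]=\Xi([U_1])$ as in \autoref{thm:Paschke-1}. You must keep this whole path, not just $U_1$ modulo $B$.

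Second, $K_1(\SD_\phi)\to K_1(\SD_\phi/B)$ is not an isomorphism in general. Since $\phi$ absorbs $0$, after a unitary conjugation $\SD_\phi$ contains a corner $t_2\CM(B)t_2^*$. This forces both boundary maps of $0\to B\to\SD_\phi\to\SD_\phi/B\to 0$ to be surjective, so one only gets $0\to K_1(\SD_\phi)\to K_1(\SD_\phi/B)\to K_0(B)\to 0$; for $A=\IC$, $B=\CK$ this is $\IZ\hookrightarrow\IZ^2$. Consequently, lifting a path from the quotient only gives $U_1=WV$ with $V\in\CU_0(\SD_\phi)$ and a leftover $W\in\CU(\eins+B)$. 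You never dispose of the $K_1(B)$-class of $W$. Your resulting statement ``$V\in\CU(\eins+B)$ and $\psi\equiv_B\ad(V)\circ\phi$'' is vacuous for a Cuntz pair, which is why Step 3 has to rebuild the asymptotics from scratch and stalls.

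Third, Step 3 is unnecessary. Once $U_1\in\CU_0(\SD_\phi)$, extend $(U_t)_{t\geq1}$ to a path in $\CU(\SD_\phi)$ on $[0,\infty)$ with $U_0=\eins$ and apply \autoref{lem:saue}. No second $K_1$-injectivity step or exponential interpolation is needed. This is the paper's self-contained route: \autoref{thm:KK-uniqueness} with $\FC$ the set of all representations. It uses \autoref{thm:Paschke-1} together with \autoref{thm:K1-injectivity} for $\CJ=0$, i.e.\ $K_1$-injectivity of $\SD_\phi$ itself, and so avoids passing to the quotient by $B$ altogether.
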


%%%%%%%%%%%%%%%%%%%%%%%%%%%%%%%%%%%%%%%%%%%%%%%%%%%%%%%%%%%%%%%%%%

\section{On a generalized equivariant $KK$-group construction}

Throughout this section we continue following \autoref{standing-assumptions}.

\begin{defi}
Let $\FC$ be a set of cocycle representations from $(A,\alpha)$ to $(\CM(B),\beta)$.
We say that $\FC$ is \emph{downward closed}, if for every cocycle representation $(\phi,\Iu)$, we have that $(\phi,\Iu)\wc\FC$ implies $(\phi,\Iu)\in\FC$.
We say that $\FC$ is an \emph{eligible set} of cocycle representations for $(\alpha,\beta)$, if it is downward closed and contains every cocycle representation of the form $(0,\Iu)$, where $\Iu$ is an arbitrary $\beta$-cocycle in $\CM(B)$.
\end{defi}

\begin{rem}
Note that in the non-equivariant situation, i.e.\ $G=\{1\}$, being eligible is the same as being downward closed.
In general, if $\FC$ is a downward closed set as above, then it follows from \cite[Proposition 4.7]{GabeSzabo25} that $\FC$ is $\sigma$-additive in the sense of `Definition 4.13' therein, i.e., it is closed under countable sums as considered in \autoref{def:countable-sums}.
If $B$ is separable, then it follows by \cite[Theorem 4.16]{GabeSzabo25} that $\FC$ always has an \emph{absorbing} element, i.e., an element $(\phi,\Iu)\in\FC$ that absorbs every element of $\FC$ in the sense of \autoref{def:absorption}.
\end{rem}

\begin{defi}
Let $\FC$ be a set of cocycle representations from $(A,\alpha)$ to $(\CM(B),\beta)$.
We define $\FC[0,1]$ as the set of cocycle representations $(\Phi,\IU): (A,\alpha)\to (\CM(B[0,1]),\beta[0,1])$ such that $\ev_t\circ(\Phi,\IU)=:(\Phi_t,\IU_t)\in\FC$ for all $t\in [0,1]$, where $\ev_t: B[0,1]\to B$ is the evaluation map at $t$.
\end{defi}

\begin{defi} \label{def:EG-C}
Suppose that $\FC$ is an eligible set of cocycle representations from $(A,\alpha)$ to $(\CM(B),\beta)$.
An \emph{(equivariant) $(\FC;\alpha,\beta)$-Cuntz pair} is an equivariant $(\alpha,\beta)$-Cuntz pair consisting of cocycle representations $(\phi,\Iu)$ and $(\psi,\Iv)$ belonging to $\FC$.
We define $\IE^G(\FC;\alpha,\beta)$ to be the set of all such pairs and $\IE_0^G(\FC;\alpha,\beta)$ to be the set of all such pairs that are also anchored in the sense of \autoref{def:KKG-Thomsen}.

We say that two elements $x_0, x_1\in\IE^G(\FC;\alpha,\beta)$ are \emph{$\FC$-homotopic}, written $x_0\sim_{h,\FC} x_1$, if there exists $X\in\IE^G(\FC[0,1];\alpha,\beta[0,1])$ with $\ev_j\circ X=x_j$ for $j=0,1$.\footnote{For the sake of notational clarity we slightly abuse notation here. The Cuntz pair $\ev_j\circ X$ is meant to be the Cuntz pair coming from composing both components of $X$ from the left with the evaluation map in $j$.}
Given a pair $\big( (\phi,\Iu), (\psi,\Iv) \big)\in\IE^G(\FC;\alpha,\beta)$, we denote by $\big[ (\phi,\Iu), (\psi,\Iv) \big]$ its $\FC$-homotopy class.
As was the case before, the subset $\IE^G_0(\FC;\alpha,\beta)\subseteq \IE^G(\FC;\alpha,\beta)$ is closed under $\FC$-homotopy.
\end{defi}

\begin{rem} \label{rem:EG-C}
Exactly as one does in the Cuntz--Thomsen picture of equivariant $KK$-theory, one observes in the context of \autoref{def:EG-C} that Cuntz addition of $(\FC;\alpha,\beta)$-Cuntz pairs descends to a well-defined abelian group structure on $\IE^G(\FC;\alpha,\beta)/_{\sim_{h,\FC}}$.
The proof of this fact (see for instance \cite[Lemma 2.10]{GabeSzabo25}, though the argument goes back much further than that) carries over word for word from the (known) case of $\FC$ being the set of all cocycle representations.
Here the fact that $\FC$ is downward closed ensures that $\FC$ is closed under Cuntz addition and unitary equivalence, which enables the known proof to work here.
The fact that $\FC$ contains all cocycle representations of the form $(0,\Iu)$ for arbitrary $\beta$-cocycles $\Iu$ implies that there is a canonical homomorphism from the group $H_\beta$ (see \autoref{def:KKG}) to $\IE^G(\FC;\alpha,\beta)/_{\sim_{h,\FC}}$.
On the other hand, the assignment $[(\phi,\Iu),(\psi,\Iv)]\mapsto [(0,\Iu),(0,\Iv)]$ yields a well-defined homomorphism $\Ff: \IE^G(\FC;\alpha,\beta)/_{\sim_{h,\FC}}\to H_\beta$ that is a left inverse of the inclusion map $H_\beta\to\IE^G(\FC;\alpha,\beta)/_{\sim_{h,\FC}}$.
We call $\Ff$ the \emph{forgetful map} and observe that $\ker(\Ff)=\IE^G_0(\FC;\alpha,\beta)/_{\sim_{h,\FC}}$.
This is analogous to the observations made in \cite[Proposition 2.12]{GabeSzabo25}.
\end{rem}

\begin{defi} \label{def:generalized-KK-groups}
Suppose that $\FC$ is an eligible set of cocycle representations from $(A,\alpha)$ to $(\CM(B),\beta)$.
We define $KK^G(\FC;\alpha,\beta)$ to be the quotient of abelian groups $\big(\IE^G(\FC;\alpha,\beta)/_{\sim_{h,\FC}}\big)/H_\beta$.
By \autoref{rem:EG-C}, the forgetful map induces a direct sum decomposition $\IE^G(\FC;\alpha,\beta)/_{\sim_{h,\FC}}\cong H_\beta\oplus \IE_0^G(\FC;\alpha,\beta)/_{\sim_{h,\FC}}$ and therefore one has a natural isomorphism $KK^G(\FC;\alpha,\beta)\cong\IE_0^G(\FC;\alpha,\beta)/_{\sim_{h,\FC}}$.
\end{defi}

\begin{example} \label{ex:generalized-KK-examples}
Let us have a brief discussion in what way \autoref{def:generalized-KK-groups} extends various known constructions.
\begin{enumerate}[leftmargin=*,label=\textup{(\arabic*)}]
	\item If $\FC$ denotes the set of all cocycle representations, then $KK^G(\FC;\alpha,\beta)$ agrees with the equivariant $KK$-group $KK^G(\alpha,\beta)$ on the nose in Thomsen's description \cite{Thomsen98}.
	\item For $G=\{1\}$, we may choose as $\FC$ the set of all weakly nuclear representations $A\to\CM(B)$.
Then $KK(\FC;A,B)$ is naturally isomorphic to Skandalis' nuclear $KK$-group $KK_\nuc(A,B)$ \cite{Skandalis88}; see \cite[Section 12]{Gabe24} for a rigorous treatment of the Cuntz picture in that setting.
	\item Still assuming $G=\{1\}$, Kirchberg's ideal-related $KK$-groups fall under the above framework.
	We refer to \cite[Sections 10--12]{Gabe24} for a rigorous treatment of the concepts mentioned here.
	For a topological space $X$, recall that an \emph{action} of $X$ on a \cstar-algebra $A$ consists of an order preserving map $\Phi_A: \CO(X)\to\CI(A)$, where the domain is the lattice of open subsets of $X$ and the codomain is the ideal lattice of $A$.
	For an open set $U\subseteq X$, one writes $A(U)=\Phi_A(U)$ as short-hand.
	A \cstar-algebra with a given action of $X$ is called an \emph{$X$-\cstar-algebra}.
	If we are given $X$-\cstar-algebras $(A,\Phi_A)$ and $(B,\Phi_B)$, then a $*$-homomorphism $\phi: A\to\CM(B)$ is called \emph{weakly $X$-equivariant} if for all $b\in B$, $a\in A$ and every open set $U\subseteq X$ one has $b^*\phi(A(U))b\subseteq B(U)$.
	It is immediate that if $A$ is separable and $B$ is stable and $\sigma$-unital, then the set $\FC_X$ of all weakly $X$-equivariant representations $A\to\CM(B)$ is an eligible set.
	Upon comparing with the literature, we see that the group $KK(\FC_X;A,B)$ in the sense of \autoref{def:generalized-KK-groups} agrees with Kirchberg's group $KK(X;A,B)$; see \cite[Proposition 2.28]{Gabe24}.
	The same is true if we consider the subset of representations $\FC_X^{\nuc}\subseteq\FC_X$ that are additionally weakly nuclear, in which case we obtain the nuclear ideal-related group $KK_\nuc(X;A,B)$.
	\item Gabe's article \cite{Gabe24} includes a more detailed discussion about the $KK$-theory of $\CC_0(X)$-algebras when $X$ is locally compact Hausdorff, which can be interpreted as the $KK$-theory of $X$-\cstar-algebras.
	From this point of view, if now $G\neq\{1\}$ and $X$ is compact Hausdorff, then Kasparov's notion \cite{Kasparov88} of $KK$-theory for $G$-$\CC(X)$-algebras can also be recast and generalized in the language of \autoref{def:generalized-KK-groups}.
	Since this is as straightforward to write out as it is lengthy and technical to justify in all its rigour, we omit further details here. 
\end{enumerate}
\end{example}

\begin{defi}[{\cite[Definition 3.5]{GabeSzabo25}}]
Two given cocycle representations $(\phi,\Iu), (\psi,\Iv): (A,\alpha)\to (\CM(B),\beta)$ are called \emph{operator homotopic}, if there exists a unitary $W\in\CU_0(\SD_{(\phi,\Iu)})$ such that $(\phi,\Iu)=\ad(W)\circ(\psi,\Iu)$.
\end{defi}

\begin{rem} \label{rem:op-hom-implies-hom}
Let $\FC$ be an eligible set.
Suppose that two given elements $(\phi,\Iu), (\psi,\Iv)\in\FC$ are operator homotopic.
Just as it was the case for ordinary equivariant $KK$-theory, it follows that the pair $\big( (\phi,\Iu), (\psi,\Iv) \big)$ is an $(\FC;\alpha,\beta)$-Cuntz pair that is $\FC$-homotopic to the degenerate pair $\big( (\phi,\Iu), (\phi,\Iu) \big)$ by connecting the chosen unitary $W$ above to the unit.
Hence $[ (\phi,\Iu), (\psi,\Iv) ]=0$ in $KK^G(\FC;\alpha,\beta)$.
This merely uses the fact that $\FC$ is closed under unitary equivalence.
\end{rem}

In complete analogy to what was observed in \cite[Section 3]{GabeSzabo25} as a Cuntz--Thomsen picture analog of Kasparov's well-known observations about operator homotopy, we shall show in the rest of this section that the implication in \autoref{rem:op-hom-implies-hom} has a stable converse.
That is, if some equivariant $(\FC;\alpha,\beta)$-Cuntz pair is $\FC$-homotopic to a degenerate one, then the two underlying cocycle representations are operator homotopic after adding a suitable representation in $\FC$.

\begin{nota}
Assume that $\beta$ is a strongly stable action, witnessed by a sequence of isometries $r_n\in\CM(B)^\beta$ with $\sum_{n=1}^\infty r_n r_n^*=\eins$.
Let us choose a family of matrix units $\{ e_{k,\ell} \mid k,\ell\geq 1\}$ generating the \cstar-algebra $\CK$ of compact operators on $\ell^2(\IN)$.
Below, we denote by $\Lambda$ the equivariant isomorphism $\Lambda: (\CK\otimes B,\id_\CK\otimes\beta)\to (B,\beta)$ given by $\Lambda(e_{k,\ell}\otimes b)=r_kbr_\ell^*$ for every $b\in B$ and $k,\ell\geq 1$.
\end{nota}

\begin{lemma} \label{lem:coc-rep-compositions}
Assume that $\FC$ is a downward closed set of cocycle representations from $(A,\alpha)$ to $(\CM(B),\beta)$.
Let $(\Omega,\IX)\in\FC[0,1]$.
If $\theta: \CC[0,1]\to\CM(\CK)\cong\IB(\ell^2(\IN))$ is any faithful unital representation with $\theta(\CC[0,1])\cap\CK=\{0\}$, then the composition of
\[
(A,\alpha) \stackrel{(\Omega,\IX)}{\longrightarrow} (\CM(B[0,1]),\beta[0,1]) \stackrel{\theta\otimes\id_B}{\longrightarrow} (\CM(\CK\otimes B),\id_\CK\otimes\beta) \stackrel{\Lambda}{\longrightarrow} (\CM(B),\beta)
\]
is a cocycle representation in $\FC$.
\end{lemma}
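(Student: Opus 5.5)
Write $(\Psi,\IY)$ for the composition, with $\Psi=\Lambda\circ(\theta\otimes\id_B)\circ\Omega$ and $\IY_g=\Lambda\big((\theta\otimes\id_B)(\IX_g)\big)$. Since $\theta\otimes\id_B$ and $\Lambda$ are equivariant (strictly continuous extensions to multiplier algebras), $(\Psi,\IY)$ is a cocycle representation. Because $\FC$ is downward closed, it suffices to show $(\Psi,\IY)\wc\{(\Omega_t,\IX_t)\mid t\in[0,1]\}\subseteq\FC$.

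First I would make $\theta$ concrete. By Voiculescu's theorem, any two faithful unital representations of $\CC[0,1]$ on $\ell^2(\IN)$ meeting $\CK$ trivially are approximately unitarily equivalent modulo compacts. Conjugating by a unitary $U\in\IB(\ell^2(\IN))$ gives a cocycle representation unitarily equivalent to $(\Psi,\IY)$ via $\Lambda(U\otimes\eins)$, which lies in $\CM(B)^\beta$. Moreover $\wc$ is insensitive to small perturbations compressed by elements of $B$. So I may assume that $\theta$ is norm-close, up to compacts, to a diagonal representation $f\mapsto\sum_k f(t_k)e_{k,k}$ for a dense sequence $(t_k)$ in $[0,1]$. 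An alternative that avoids Voiculescu is to work directly with the weak containment estimates: for a contraction $b\in B$, approximate $b$ by an element $\sum_{k,\ell\le N} r_k b_{k\ell} r_\ell^*$ with $b_{k\ell}\in B$. Then $b^*\Psi(a)b$ is a finite sum of terms of the form $r_\ell b_{k\ell}^*\big((\theta_{kk'}\otimes\id)\Omega(a)\big)b_{k'\ell'}r_{\ell'}^*$, where $\theta_{kk'}(f)=\langle\theta(f)\delta_{k'},\delta_k\rangle$.

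The key step is then the following. A compression of $\theta$ to a finite-dimensional subspace is a unital completely positive map $\CC[0,1]\to M_N$. Every such map is approximated in point-norm on finite sets by maps of the form $f\mapsto\sum_{j} v_j^* f(s_j) v_j$ with $\sum_j v_j^*v_j=\eins$, obtained by partitioning $[0,1]$ finely and using continuity; this is the commutativity/nuclearity of $\CC[0,1]$. Substituting this, and using that $\Omega(a)$ and $\IX_g$ are strictly continuous functions of $t$ and hence uniformly continuous on compressions by $B$ with $g$ ranging over a compact $K$, I get
\[
b^*\Psi(a)b\approx\sum_j c_j^*\Omega_{s_j}(a)c_j,\qquad b^*\IY_g\beta_g(b)\approx\sum_j c_j^*\IX_{s_j,g}\beta_g(c_j).
\]
Here $c_j\in B$ is built from the $r_\ell$, the scalars in $v_j$, and the $b_{k\ell}$. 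For the cocycle part I use that the $r_k$ are $\beta$-invariant, so that $\beta_g(c_j)$ has the matching form. This is exactly the estimate demanded in \autoref{def:wc} with finitely many elements $(\Omega_{s_j},\IX_{s_j})\in\FC$.

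The main obstacle is bookkeeping the cocycle estimate uniformly over $g\in K$. The worry is that $\IX_g$ is only strictly continuous. Since $(b_{k\ell})$ is fixed and $K$ is compact, the maps $(t,g)\mapsto b_{k\ell}^*\IX_{t,g}\beta_g(b_{k'\ell'})$ are norm-continuous on $[0,1]\times K$, so they are uniformly continuous there. That continuity is what makes the partition of $[0,1]$ work uniformly in $g$.
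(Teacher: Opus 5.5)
Your proposal is correct. The argument you actually carry out, the direct route that avoids Voiculescu, differs from the paper's.

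The paper uses Voiculescu's theorem and \cite[Proposition 4.7]{GabeSzabo25} only to reduce to one specific $\theta$. This is the diagonal representation $\theta(f)=\sum_n f(t_n)e_{n,n}$ over an enumeration of $\IQ\cap[0,1]$. For it the compression $P_N\theta(f)P_N=\sum_{n\le N}f(t_n)e_{n,n}$ is exactly a sum of point evaluations. So once $b$ is replaced by $b_0b$ with $b_0=\sum_{n\le N}r_ner_n^*$, both estimates in \autoref{def:wc} become exact identities with $c_n=er_n^*b$. There is then no uniformity issue in $g$ at all.

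Your first paragraph sketches this reduction but stops before the diagonal computation. Note also that norm-closeness of representations of $\CC[0,1]$ only gives strict closeness after applying $\theta\otimes\id_B$ to multipliers. That is why the paper phrases the reduction through point-strict convergence.

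Your direct route handles an arbitrary $\theta$ by approximating the ucp compressions $P_N\theta(\cdot)P_N$ with a partition of unity. The price is the uniform continuity of $t\mapsto b^*\Omega_t(a)b'$ and $(t,g)\mapsto b^*\IX_{t,g}\beta_g(b')$ on $[0,1]\times K$, which you justify correctly. In exchange you need neither Voiculescu's theorem nor the cited proposition. It also shows that faithfulness of $\theta$ and $\theta(\CC[0,1])\cap\CK=\{0\}$ are superfluous. Any unital $\theta$ works; unitality is needed only so that $\IY_g$ is unitary.

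Make two points explicit when you write it up.

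First, point-norm approximation on finite subsets of $\CC[0,1]$ is not by itself what is needed. Apply the specific formula $F\mapsto\sum_j\theta_N(h_j)\otimes F(s_j)$ to the norm-continuous $B$-valued functions $F(t)=b_{k\ell}^*\Omega_t(a)b_{k'\ell'}$, and to their cocycle analogues. This uses that the slice maps $\theta_{kk'}\otimes\id_B$ are $\CM(B)$-bimodular and contractive on $B[0,1]$. It also uses that $\|F-\sum_j h_j\otimes F(s_j)\|_\infty$ is bounded by the modulus of continuity of $F$ at the mesh of the partition.

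Second, take $\mathbf b=\sum_{k,\ell\le N}e_{k,\ell}\otimes b_{k\ell}$ and $v_j=\theta_N(h_j)^{1/2}\in M_N$. Then each point $s_j$ gives $N$ elements $c_{j,k}=r_k^*\Lambda\big((v_j\otimes\eins)\mathbf b\big)$, not a single $c_j$. This is exactly the indexing allowed in \autoref{def:wc}. The scalar matrices $v_j$ commute with $\id_\CK\otimes\beta_g$, which gives $\beta_g(c_{j,k})$ the matching form.
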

\begin{proof}
Let $(\kappa,\Ix)$ be the cocycle representation that arises from the composition above.
Since $\FC$ is downward closed, it suffices to verify $(\kappa,\Ix)\wc\FC$.

We first claim that it suffices to prove this for a particular choice of $\theta$.
Indeed, if $\theta_1$ and $\theta_2$ are two representations as in the statement, then denote by $(\kappa^{(j)},\Ix^{(j)})$ their associated compositions for $j=1,2$.
It follows by Voiculescu's theorem \cite{Voiculescu76} that there exists a sequence of unitaries $U_n\in\CU(\CM(\CK))$ such that $\ad(U_n)\circ\theta_1\to \theta_2$ in the point-norm topology.
The sequence of unitaries $W_n=\Lambda(U_n\otimes\eins_{\CM(B)})\in\CM(B)^\beta$ is then seen to satisfy the limit behavior
\[
\begin{array}{ccl}
\ad(W_n)\circ\kappa^{(1)} &=& \Lambda\circ \ad(U_n\otimes\eins_{\CM(B)})\circ (\theta_1\otimes\id_B)\circ\Omega \\
&\to& \Lambda\circ (\theta_2\otimes\id_B)\circ\Omega \ = \ \kappa^{(2)}
\end{array} 
\]
in the point-strict topology, as well as 
\[
\begin{array}{ccl}
W_n\Ix^{(1)}_g\beta_g(W_n)^* &=& W_n\Ix_gW_n^* \\
&=& \Lambda(U_n\otimes\eins_{\CM(B)}) \Lambda((\theta_1\otimes\id_B)(\IX_g)) \Lambda(U_n^*\otimes\eins_{\CM(B)}) \\
&\to& \Lambda((\theta_2\otimes\id_B)(\IX_g)) \ = \ \Ix^{(2)}_g
\end{array} 
\]
strictly and uniformly over compact subsets of $G$.
By \cite[Proposition 4.7]{GabeSzabo25}, it follows that $(\kappa^{(1)},\Ix^{(1)})$ is weakly contained in $(\kappa^{(2)},\Ix^{(2)})$.
We conclude that if the claim holds for $\theta_2$ in place of $\theta$, then it also holds for $\theta_1$ as $\FC$ was downward closed.
Since $\theta_1$ and $\theta_2$ were both arbitrary, we have verified that it suffices to prove the statement for a specific choice of $\theta$.

Let us choose a surjective map $\IN\to\IQ\cap[0,1]$, $n\mapsto t_n$.
Let $(\delta_n)_{n\geq 1}$ be the standard orthonormal basis for $\CH=\ell^2(\IN)$ and define $\theta: \CC[0,1]\to\CM(\CK)=\CB(\CH)$ via $\theta(f)(\xi)=\sum_{n=1}^\infty f(t_n)\langle \delta_n\mid\xi\rangle\delta_n$ for all $f\in\CC[0,1]$ and $\xi\in\ell^2(\IN)$.\footnote{For this formula we adopt the convention that the inner product is linear in the second component.}
We proceed to prove the statement for this choice of $\theta$.

Let $\eps>0$ be a constant, $b\in B$ a contraction, $\CF\subset A$ a finite set and $K\subseteq G$ a compact subset.
We may assume that $\CF$ consists of contractions.
We may choose a number $N$ and a positive contraction $e\in B$ such that $\|b(\eins-b_0)\|\leq\eps/2$, where $b_0=\sum_{n=1}^N r_n e r_n^*$.
Let $P_N\in\CK\subseteq\CM(\CK\otimes B)$ be given as $P_N=\sum_{n=1}^N e_{n,n}$.
Then we have for all $f\in\CC[0,1]$ that $P_N\theta(f)P_N=\sum_{n=1}^N f(t_n)e_{n,n}$.
Hence we have for all $a\in A$ that
\[
\begin{array}{ccl}
b_0^*\kappa(a)b_0 &=& \Lambda(P_N\otimes e) \kappa(a) \Lambda(P_N\otimes e) \\
&=& \Lambda\Big[ (P_N\otimes e) (\theta\otimes\id_B)(\Omega(a)) (P_N\otimes e)\Big] \\
&=& \Lambda\Big[ \sum_{n=1}^N e_{n,n}\otimes (e\Omega_{t_n}(a)e) \Big] \\
&=& \dst\sum_{n=1}^N r_n e \Omega_{t_n}(a) e r_n^*.
\end{array}
\]
Furthermore we have for all $g\in G$ that
\[
\begin{array}{ccl}
b_0^*\Ix_g\beta_g(b_0) &=& b_0^*\cdot\Lambda((\theta\otimes\id_B)(\IX_g)) \cdot\beta_g(b_0) \\
&=& \Lambda\Big[ (P_N\otimes e) (\theta\otimes\id_B)(\IX_g) (P_N\otimes \beta_g(e))\Big] \\
&=& \Lambda\Big[ \sum_{n=1}^N e_{n,n}\otimes (e\IX_{g,t_n}\beta_g(e)) \Big] \\
&=& \dst\sum_{n=1}^N r_ne \IX_{g,t_n}\beta_g(e)r_n^*.
\end{array}
\]
If we consider $c_j=er_n^*b$ for all $j=1,\dots,N$, then we observe for all $a\in\CF$ that
\[
b^*\kappa(a)b =_{\eps} b^*b_0^*\kappa(a)b_0b =  \sum_{n=1}^N c_n^*\Omega_{t_n}(a)c_n
\]
and for all $g\in K$ that
\[
b^*\Ix_g\beta_g(b)^* =_\eps b^*b_0^*\Ix_g\beta_g(b_0b)=\sum_{n=1}^N c_n^*\IX_{g,t_n}\beta_g(c_n).
\]
Since $(\Omega_t,\IU_t)\in\FC$ for all $t\in [0,1]$ and the quadruple $(\eps,b,\CF,K)$ was arbitrary, we get that indeed $(\kappa,\Ix)\wc\FC$.
\end{proof}

\begin{lemma} \label{lem:KKarantine}
Let $\FC$ be a downward closed set of cocycle representations from $(A,\alpha)$ to $(\CM(B),\beta)$.
Suppose that $\big( (\Phi,\IU), (\Psi,\IV) \big)$ is a $(\FC[0,1]; \alpha, \beta[0,1])$-Cuntz pair such that 
\[
(\phi,\Iu) :=\ev_0\circ(\Phi,\IU)=\ev_1\circ(\Phi,\IU)=\ev_0\circ(\Psi,\IV).
\]
Define $(\psi,\Iv):=\ev_1\circ(\Psi,\IV)$.
Then there exists a cocycle representation $(\kappa,\Ix)\in\FC$ such that $(\phi, \Iu) \oplus (\kappa, \Ix)$ and $(\psi,\Iv) \oplus (\kappa, \Ix)$ are operator homotopic.
\end{lemma}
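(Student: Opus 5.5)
We plan to follow the proof of the corresponding statement \cite[Section 3/5]{GabeSzabo25} (Kasparov's trick) and use \autoref{lem:coc-rep-compositions} to keep everything inside $\FC$. Fix $\theta: \CC[0,1]\to\CM(\CK)$ as in \autoref{lem:coc-rep-compositions}. The natural candidate is
\[
(\kappa,\Ix) := \Lambda\circ(\theta\otimes\id_B)\circ(\Phi,\IU),
\]
possibly Cuntz-summed with an infinite repeat of itself (or with a second copy built from $(\Psi,\IV)$), which is in $\FC$ by \autoref{lem:coc-rep-compositions} together with $\sigma$-additivity of $\FC$. Composing $(\Psi,\IV)$ in the same way gives $(\kappa',\Ix')\in\FC$, and since $\Phi-\Psi$ and $\IU-\IV$ take values in $B[0,1]$ while $\theta\otimes\id_B$ sends $B[0,1]$ into $\CM(\CK\otimes B)$ but not into $\CK\otimes B$, we will instead choose $\theta$ to be a diagonal representation as in the proof of \autoref{lem:coc-rep-compositions} but built along a sequence of points $t_n$ moving slowly from $0$ to $1$ (with each rational repeated infinitely often), so that the ``shift'' between consecutive diagonal entries is small.

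The key steps, in order, are as follows. First, use the discretized path: let $(\phi_n,\Iu_n)=(\Phi_{t_n},\IU_{t_n})$ and $(\psi_n,\Iv_n)=(\Psi_{t_n},\IV_{t_n})$ along a sequence $t_n$ with $t_1=0$, $|t_{n+1}-t_n|\to 0$, oscillating between $0$ and $1$ densely. Second, form the diagonal sums $\bigoplus_n(\phi_n,\Iu_n)$ and $\bigoplus_n(\psi_n,\Iv_n)$; these lie in $\FC$ (as above), and their difference lies in $B$ because the differences $\Phi-\Psi$ are continuous $B$-valued and we can arrange decay by passing to $e$-cutoffs. Third, exploit the endpoint conditions $\Phi_0=\Phi_1=\Psi_0=\phi$, $\Psi_1=\psi$: the sum $(\phi,\Iu)\oplus\bigoplus_n(\Psi_{t_n},\IV_{t_n})$ and $(\psi,\Iv)\oplus\bigoplus_n(\Psi_{t_n},\IV_{t_n})$ differ by a one-step shift of the diagonal, and a shift unitary $S$ (rotation of adjacent summands, constructed from the $r_n$ as in \autoref{prop:infinite-repeat}) conjugates one to the other modulo $B$ because consecutive entries are close and their difference tends to zero. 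Concretely, $(\kappa,\Ix)=\bigoplus_n(\Psi_{t_n},\IV_{t_n})\oplus\bigoplus_n(\Phi_{t_n},\IU_{t_n})$, and the loop $\Phi$ (starting and ending at $\phi$) absorbs the discrepancy between the two shifts.

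Fourth, show the shifting unitary lies in $\CU_0(\SD)$: the shift is implemented by a product of $2\times2$ rotation matrices $\begin{pmatrix}\cos s&-\sin s\\ \sin s&\cos s\end{pmatrix}$ acting on pairs of adjacent summands, and the rotation path $s\in[0,\pi/2]$ stays in $\SD_{(\phi,\Iu)\oplus(\kappa,\Ix)}$ since the adjacent summands agree modulo $B$ up to errors tending to zero (using continuity of $\Phi,\Psi$ in $t$ and uniform convergence over compact subsets of $G$ for the cocycles). This gives the operator homotopy.

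The main obstacle is the third and fourth steps: ensuring the commutators $[W_s,(\phi\oplus\kappa)(a)]$ and $W_s-\beta^{\Iu\oplus\Ix}_g(W_s)$ genuinely lie in $B$ rather than merely being small; I expect to handle this by inserting a quasicentral, approximately invariant approximate unit from \autoref{lem:Kasparov} to make each summand compact-valued and the errors summable, exactly as in the corresponding step of \cite{GabeSzabo25}, with downward closedness only needed to certify $(\kappa,\Ix)\in\FC$.
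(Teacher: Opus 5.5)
\textbf{Verdict: genuine gap.} Your first move is right, and it is all the paper does: take the $(\kappa,\Ix)$ supplied by \cite{GabeSzabo25} and certify that it lies in $\FC$ via \autoref{lem:coc-rep-compositions}. The gap is in the operator homotopy you then build by hand, which does not work.

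The reason is that $\Phi$ and $\Psi$ take values in $\CM(B[0,1])$. So $t\mapsto\Phi_t(a)$, $t\mapsto\Psi_t(a)$ and $t\mapsto\IU_{g,t}$ are only \emph{strictly} continuous. Only the differences $\Phi_t(a)-\Psi_t(a)$ and $\IU_{g,t}-\IV_{g,t}$ are norm-continuous and $B$-valued. This breaks each of your steps:
\begin{enumerate}
\item In your second step, $\sum_n r_n(\Phi_{t_n}(a)-\Psi_{t_n}(a))r_n^*$ is not in $B$. Each block lies in $B$, but the block norms do not tend to zero: your $t_n$ return close to $1$ infinitely often, and there the block is close to $\phi(a)-\psi(a)\neq 0$. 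So your two diagonal sums do not even form a Cuntz pair.
\item In your third and fourth steps, consecutive entries $\Psi_{t_{n+1}}(a)$ and $\Psi_{t_n}(a)$ are close only strictly, not in norm, and their difference need not lie in $B$. Think of $\Psi_t=\ad(U_t)\circ\phi$ with $U$ strictly but not norm continuous. Hence neither your shift unitary nor the rotations between adjacent summands belong to $\SD_{(\phi,\Iu)\oplus(\kappa,\Ix)}$.
\end{enumerate}
Inserting a quasicentral approximate unit from \autoref{lem:Kasparov} does not repair this. The defect is not that some commutator is small without lying in $B$; the relevant differences are neither in $B$ nor small. Moreover the representations are given, so you cannot cut them down by ``$e$-cutoffs''.

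The missing observation is that nothing needs to be rebuilt. \cite[Notation 3.7, Lemma 3.8]{GabeSzabo25} already produces, for arbitrary cocycle representations, a $(\kappa,\Ix)$ with the required operator homotopy. Explicitly, $(\kappa,\Ix)=(\phi,\Iu)\oplus(\chi,\Iy)$, where $(\chi,\Iy)=\Lambda\circ(\theta\otimes\id_B)\circ\big((\Psi,\IV)\oplus(\Phi,\IU)\big)$, $\theta=\theta_0\oplus\theta_0$, and $\theta_0$ is the particular representation from \cite[Theorem 2.15]{GabeSzabo25}. The only new point to check is $(\kappa,\Ix)\in\FC$:
\begin{itemize}
\item Since $(\Psi,\IV)\oplus(\Phi,\IU)\in\FC[0,1]$, \autoref{lem:coc-rep-compositions} gives $(\chi,\Iy)\in\FC$.
\item Since the downward closed set $\FC$ is closed under Cuntz sums, $(\kappa,\Ix)\in\FC$.
\end{itemize}
The first half of the proof of \autoref{lem:coc-rep-compositions}, the Voiculescu argument showing that the choice of $\theta$ is irrelevant, is exactly what lets you use the specific $\theta_0$ that \cite{GabeSzabo25} needs for its operator homotopy. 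You do not need a diagonal representation of your own choosing.
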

\begin{proof}
We apply the construction and argument as described in the statements of \cite[Notation 3.7, Lemma 3.8]{GabeSzabo25}.
This gives a cocycle representation $(\kappa,\Ix)$ such that $(\phi, \Iu) \oplus (\kappa, \Ix)$ and $(\psi,\Iv) \oplus (\kappa, \Ix)$ are operator homotopic.
Upon a close inspection of how $(\kappa,\Ix)$ is constructed there\footnote{This is admittedly quite lengthy to check when reading, but we appeal to the construction described in \cite[Notation 3.7]{GabeSzabo25} as-is. We omit the details here, as we would otherwise just copy content from this reference. Notably, the proof of ``Lemma 3.8'' therein is irrelevant for this discussion.}, this cocycle representation arises as a Cuntz sum of $(\phi,\Iu)$ and a composition of cocycle representations that is of the same form as in \autoref{lem:coc-rep-compositions}. 
To be more precise, if one starts with a representation $\theta_0: \CC[0,1]\to\CM(\CK(\CH))$ satisfying the conclusion of \cite[Theorem 2.15]{GabeSzabo25}, then one sets
\[
\theta=\theta_0\oplus\theta_0: \CC[0,1]\to\CM(\CK(\CH\oplus\CH))
\] 
(as an ordinary direct sum) and 
\[
(\Omega,\IX)=(\Psi,\IV)\oplus (\Phi,\IU): (A,\alpha)\to (\CM(B[0,1]),\beta[0,1])
\]
(as a Cuntz sum).
Let $(\chi,\Iy)$ be the cocycle representation that arises as a composition as in \autoref{lem:coc-rep-compositions} with these choices made.
Evidently $(\Omega,\IX)\in\FC[0,1]$ and thus also $(\chi,\Iy)\in\FC$.
The construction and conclusion of \cite[Notation 3.7, Lemma 3.8]{GabeSzabo25} then tells us that $(\kappa,\Ix)=(\phi,\Iu)\oplus(\chi,\Iy)\in\FC$ satisfies the desired property. 
\end{proof}

We close this section by including a straightforward generalization of a known observation for  $KK$-theory:

\begin{prop} \label{prop:standard-form-KK-classes}
Let $\FC$ be an eligible set.
Let $(\theta,\Iy)\in\FC$ be an absorbing element.\footnote{We remind that reader that if $B$ is separable, such an element always exists by \cite[Theorem 4.16]{GabeSzabo25}.}
Then:
\begin{enumerate}[label=\textup{(\roman*)},leftmargin=*] 
\item Every element $x\in \IE^G(\FC;\alpha,\beta)/{\sim_{h,\FC}}$ can be expressed as the equivalence class of a $(\FC;\alpha,\beta)$-Cuntz pair $\big( (\phi,\Iu), (\theta,\Iy) \big)$ for some absorbing element $(\phi,\Iu)\in\FC$. \label{prop:standard-form-KK-classes:1}
\item Every element $z\in KK^G(\FC;\alpha,\beta)$ can be expressed as the equivalence class of an anchored $(\FC;\alpha,\beta)$-Cuntz pair $\big( (\phi,\Iu), (\theta,\Iy) \big)$ for some absorbing element $(\phi,\Iu)\in\FC$. \label{prop:standard-form-KK-classes:2}
\end{enumerate}
\end{prop}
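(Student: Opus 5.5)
Given $x=[(\phi_0,\Iu_0),(\psi_0,\Iv_0)]$ with both components in $\FC$, the plan is to add a degenerate pair built from $(\theta,\Iy)$, rewrite the resulting pair as a unitary conjugate, and then use absorption to turn the second entry into $(\theta,\Iy)$ on the nose.

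First I add to the given pair the degenerate pair $\big((\theta,\Iy),(\theta,\Iy)\big)$. This does not change the class: a degenerate pair represents zero, because $[\,(\theta,\Iy),(\theta,\Iy)\,]=0$ in the group $\IE^G(\FC;\alpha,\beta)/{\sim_{h,\FC}}$, as in \autoref{rem:EG-C}. The result is the pair $\big((\phi_0,\Iu_0)\oplus(\theta,\Iy),\,(\psi_0,\Iv_0)\oplus(\theta,\Iy)\big)$.

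Next I use absorption. Since $(\theta,\Iy)$ absorbs $(\psi_0,\Iv_0)\in\FC$, there is a norm-continuous unitary path $U_t$ witnessing
\[
(\psi_0,\Iv_0)\oplus(\theta,\Iy)\asymp(\theta,\Iy).
\]
Take $W=U_0$. By \autoref{def:various-equivalences}, $\ad(W)$ carries $(\psi_0,\Iv_0)\oplus(\theta,\Iy)$ to $(\theta,\Iy)$ modulo $B$, in both the representation and the cocycle coordinate.

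I then conjugate both entries of the pair by the same unitary $W$. Conjugating a Cuntz pair entrywise by one unitary gives a homotopic pair, and this homotopy stays inside $\FC$:
\begin{itemize}
\item Since $\CU(\CM(B))$ is norm-connected by Cuntz--Higson, there is a path $W_s$ from $\eins$ to $W$.
\item The path $s\mapsto\ad(W_s)\circ(\text{pair})$ is a Cuntz pair over $B[0,1]$, because the differences of the conjugated entries remain in $B$ and vary continuously.
\item Each point of the path lies in $\FC$, since $\FC$ is closed under unitary equivalence (being downward closed).
\end{itemize}
Set $(\phi,\Iu)=\ad(W)\circ\big((\phi_0,\Iu_0)\oplus(\theta,\Iy)\big)$ and $(\theta',\Iy')=\ad(W)\circ\big((\psi_0,\Iv_0)\oplus(\theta,\Iy)\big)$. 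Then $x=[(\phi,\Iu),(\theta',\Iy')]$, and $(\theta',\Iy')\equiv(\theta,\Iy)$ modulo $B$.

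The key step is to replace $(\theta',\Iy')$ by $(\theta,\Iy)$ exactly. Since both entries agree modulo $B$, I use the straight-line homotopy: the first entry stays $(\phi,\Iu)$, and the second entry is the pair $(\theta',\Iy')$ throughout, with the Cuntz pair at time $s$ declared via the differences. This naive version does not directly work, since convex combinations of cocycle representations are not cocycle representations.

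Instead I use the asymptotic path $U_t$. For each $t$, the pair
\[
\big(\ad(U_t)\circ((\phi_0,\Iu_0)\oplus(\theta,\Iy)),\ \ad(U_t)\circ((\psi_0,\Iv_0)\oplus(\theta,\Iy))\big)
\]
is a Cuntz pair. I reparametrize $[0,\infty]$ to $[0,1]$, placing $(\theta,\Iy)$ at the endpoint. The second entry is then continuous at $\infty$ in the required sense: the differences lie in $B$ and converge in norm, by the conditions in \autoref{def:various-equivalences}. The first entry $\ad(U_t)\circ(\phi_0\oplus\theta)$ needs a limit at $t=\infty$. It differs from the second entry by an element of $B$ that varies norm-continuously, so I define the first entry at $\infty$ as (second entry at $\infty$) plus the limit of these differences. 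I expect this to be the main obstacle: showing that the limit of the differences exists and yields a cocycle representation in $\FC$, with the homotopy lying in $\FC[0,1]$. If this is awkward, a fallback is to cite the standard argument used for \cite[Proposition 2.12]{GabeSzabo25} or Thomsen's analogous lemma, which proceeds in exactly this way.

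Finally I make $(\phi,\Iu)$ absorbing. It is unitarily equivalent to $(\phi_0,\Iu_0)\oplus(\theta,\Iy)$, and absorption is preserved under unitary equivalence and under adding arbitrary summands, since $\theta$ absorbs everything in $\FC$. So $(\phi,\Iu)$ absorbs every element of $\FC$, which proves (i).

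For (ii), by \autoref{def:generalized-KK-groups} an element $z\in KK^G(\FC;\alpha,\beta)$ corresponds to a class in $\IE_0^G$. Applying (i) to that class, the resulting pair has the same class, and the subset $\IE_0^G(\FC;\alpha,\beta)$ is closed under $\FC$-homotopy. Hence the pair is anchored, which proves (ii).
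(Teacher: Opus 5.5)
\textbf{There is a genuine gap at your key step.} Your overall plan is right: add the degenerate pair of $(\theta,\Iy)$, conjugate by $W=U_0$, then push the second entry to $(\theta,\Iy)$. The problem is that you conjugate \emph{both} entries by $U_t$ and let $t\to\infty$, which requires the first entry $\ad(U_t)\circ\big((\phi_0,\Iu_0)\oplus(\theta,\Iy)\big)$ to converge. That is, the elements $U_t\big((\phi_0(a)-\psi_0(a))\oplus 0\big)U_t^*\in B$ and their cocycle analogues would have to converge in norm.

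Nothing in \autoref{def:various-equivalences} controls how $U_t$ acts on $B$. For instance, you may replace $U_t$ by $V_tU_t$ for any norm-continuous path of unitaries $V_t$ commuting with $\theta(A)$ and fixed by $\beta^{\Iy}$. This does not affect the four defining conditions, but it can make these conjugates oscillate. So in general the limit you need does not exist.

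Worse, even when a limit exists, the endpoint of your homotopy is that limit. It is not the representation $(\phi,\Iu)=\ad(W)\circ\big((\phi_0,\Iu_0)\oplus(\theta,\Iy)\big)$ whose absorbing property you check in the last paragraph, so that check would not apply to the pair you end up with. Deferring to a ``standard argument'' leaves exactly the crucial step unproved.

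\textbf{The fix is to move only the second entry.} Keep $(\phi,\Iu)$ constant. Let the second entry be $\ad(U_t)\circ\big((\psi_0,\Iv_0)\oplus(\theta,\Iy)\big)$ for $t\in[0,\infty)$, and $(\theta,\Iy)$ at $t=\infty$. Since $(\phi_0,\psi_0)$ is a Cuntz pair, the second and fourth bullets of \autoref{def:various-equivalences} show that all these entries, and $(\phi,\Iu)$, are congruent to $(\theta,\Iy)$ modulo $B$. Hence
\[
\phi(a)-U_t(\psi_0\oplus\theta)(a)U_t^*\in B,\qquad \Iu_g-U_t(\Iv_0\oplus\Iy)_g\beta_g(U_t)^*\in B .
\]
These differences are norm-continuous in $t$. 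By the first and third bullets they converge in norm to $\phi(a)-\theta(a)$ and $\Iu_g-\Iy_g$, the latter uniformly on compact subsets of $G$.

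After reparametrizing $[0,\infty]\cong[0,1]$, this gives an element of $\IE^G(\FC[0,1];\alpha,\beta[0,1])$, since every fibre lies in $\FC$ by closure under Cuntz sums and unitary equivalence. It connects $\big((\phi,\Iu),(\theta',\Iy')\big)$ to $\big((\phi,\Iu),(\theta,\Iy)\big)$. This one-sided asymptotic homotopy is the same device the paper uses in the proof of \autoref{thm:Paschke-1}.

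With this change the first entry stays $(\phi,\Iu)$ throughout. Your final absorption argument and your deduction of (ii) then go through as written.
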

\begin{proof}
This is the generalization of \cite[Corollary 4.17]{GabeSzabo25}.
We can copy the proof therein word for word to prove the statement here.
The argument carries over because elements of $\FC$ are closed under unitary equivalence and Cuntz sums.
\end{proof}

%%%

\section{$KK$-uniqueness}

\begin{nota} \label{nota:C-phi}
Let $\alpha: G\curvearrowright A$ and $\beta: G\curvearrowright B$ be actions on \cstar-algebras.
Let $(\phi,\Iu): A\to \CM(B)$ be a cocycle representation.
We define 
\[
\SC_{(\phi,\Iu)}=\set{ x\in\CM(B) \mid x\phi(A)\cup\phi(A)x\subseteq B, \set{x-\beta^\Iu_g(x)}_{g\in G}\subseteq B}.
\]
When we compare with \autoref{nota:D-phi}, we see that $\SC_{(\phi,\Iu)}\triangleleft\SD_{(\phi,\Iu)}$ is an ideal.
We may thus consider the quotient $\SB_{(\phi,\Iu)}=\SD_{(\phi,\Iu)}/\SC_{(\phi,\Iu)}$.

Given an arbitrary $*$-homomorphism $\psi: A\to\CM(B)$, we may view it as a cocycle representation with respect to $G=\{1\}$ and hence write $\SC_\psi$ as short-hand for $\SC_{(\psi,\eins)}$.
Using this notation, we can also observe that when $(\phi,\Iu)$ is given as above, then $\SC_\phi$ is genuinely $\beta^\Iu$-invariant and $\SC_\phi\triangleleft\SD^w_{(\phi,\Iu)}$ is an ideal.
\end{nota}

The following lemma and many variations of it are well-known, but we include a proof for the reader's convenience.

\begin{lemma} \label{lem:isometry-map}
Let $E$ be any \cstar-algebra and $\theta: E\to E$ an endomorphism.
Suppose that there exists an isometry $S\in\CM(E)$ with $\theta(x)=SxS^*$ for all $x\in E$.
Then one has for every ideal $I\triangleleft E$ that $\theta(I)\subseteq I$.
For any inclusion of ideals $J\triangleleft I\triangleleft E$, denote by $\theta_J^I$ the induced endomorphism on $I/J$ via $\theta_J^I(x+J)=\theta(x)+J$ for all $x\in I$.
It follows that $K_*(\theta^I_J)=\id$ on $K_*(I/J)$.
\end{lemma}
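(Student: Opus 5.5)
The plan has two parts: first show that $\theta$ preserves every ideal, then obtain the $K$-theory statement from the standard fact that conjugation by an isometry induces the identity on $K$-theory.

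For invariance, let $I\triangleleft E$ and $x\in I$. Since $I$ is an ideal of $E$, it is also an ideal in $\CM(E)$; one way to see this is to write $x=x_1x_2$ with $x_1,x_2\in I$ by Cohen factorization and use $Sx_1\in E$. Hence $SxS^*\in I$, so $\theta(I)\subseteq I$, and if $J\triangleleft I$ then $\theta(J)\subseteq J$. The induced map $\theta^I_J$ is therefore a well-defined endomorphism of $I/J$.

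For the $K$-theory claim, the natural tool is the multiplier algebra $\CM(I/J)$. There are canonical $*$-homomorphisms
\[
\CM(E)\to\CM(I)\to\CM(I/J).
\]
The first is restriction, since $I$ is an ideal of $E$. The second holds because $J$ is an ideal of $\CM(I)$ (by the same factorization argument), so $\CM(I)$ acts on $I/J$ and maps into $\CM(I/J)$.

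1. Let $s$ be the image of $S$ in $\CM(I/J)$. It is an isometry, and $\theta^I_J(y)=sys^*$ for $y\in I/J$.
2. Apply the standard fact: if $s\in\CM(D)$ is an isometry, then $\ad(s)\colon D\to D$ induces the identity on $K_*(D)$. For matrices one uses the rotation trick, conjugating
\[
\matrix{ y & 0 \\ 0 & 0 }
\quad\text{by the unitary}\quad
U=\matrix{ s & \eins-ss^* \\ 0 & s^* }\in M_2(\CM(D)).
\]
3. $U$ is a unitary multiplier, and $\ad(U)$ applied to $\diag(y,0)$ gives $\diag(sys^*,0)$. Since $\CU(M_2(\CM(D)))$ need not be connected, conjugation by $U$ alone does not yet prove the claim.

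Instead I will use the unitary $U\oplus U^*$ in $M_4(\CM(D))$, which is connected to $\eins$ by the Whitehead rotation path. Conjugating by unitaries of $M_n(\CM(D))$ that lie in the connected component of $\eins$ acts trivially on $K_0(D)$ and $K_1(D)$, including for nonunital $D$ via the unitization. It also commutes with stabilization by the $0$ block, so this gives $K_*(\ad(s))=\id$.

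4. Applying this with $D=I/J$ and the isometry $s$ gives $K_*(\theta^I_J)=\id$.

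The main point to check is that $(U\oplus U^*)\,\diag(y,0,0,0)\,(U\oplus U^*)^*$ equals $\diag(sys^*,0,0,0)$. This holds because the $U^*$ block only acts on zero entries. The homotopy of $U\oplus U^*$ to $\eins$ through unitaries of $M_4(\CM(D))$ then realizes a homotopy of the corresponding maps into $M_4(D)$. Everything else is routine bookkeeping with unitizations.
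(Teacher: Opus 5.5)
Your proof is correct and follows the same route as the paper: you reduce to $I=E$, $J=0$ via the canonical map $\CM(E)\to\CM(I/J)$, which sends $S$ to an isometry implementing $\theta^I_J$, and then use a rotation homotopy to show that the corner embedding composed with $\ad(s)$ is homotopic to the corner embedding itself. The only difference is the homotopy: the paper stays in $M_2$ by conjugating the isometry $e_{11}\otimes S+e_{22}\otimes\eins$ along a scalar unitary path from $\eins$ to the flip $e_{12}+e_{21}$, which avoids your unitary dilation $U$ and the Whitehead step in $M_4$ (both of which are valid).
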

\begin{proof}
Given ideals $J\triangleleft I\triangleleft E$, it follows from the universal property of the multiplier algebra that there is a canonical $*$-homomorphism $\pi: \CM(E)\to\CM(I/J)$ given by $\pi(y)(b+J)=yb+J$ for all $b\in I$ and $y\in\CM(E)$.
Hence $\pi(S)\in\CM(I)$ is an isometry implementing $\theta^I_J$.
Therefore, it suffices to show the last part of the claim for $I=E$ and $J=0$.

Let $\{e_{k\ell}\}_{k,\ell=1,2}$ be the generating matrix units of the $2\times 2$ matrix algebra $M_2$.
In order to prove the claim it suffices (by stability of $K$-theory) to show that the composition with the corner embedding $e_{11}\otimes\theta: E\to M_2(E)$ is homotopic to $e_{11}\otimes\id_E$.
Choose some norm-continuous path of unitaries $W: [0,1]\to M_2\subseteq\CM(M_2(E))$ such that $W_0=\eins$ and $W_1=e_{21}+e_{12}$ (e.g., $W_t=\exp(\frac{\pi}{2} i t (\eins-W_1))$).
The resulting continuous path of isometries $R_t=W_t(e_{11}\otimes S+ e_{22}\otimes\eins)W_t^*$ satisfies $R_0=(e_{11}\otimes S+ e_{22}\otimes\eins)$ and $R_1=(e_{11}\otimes\eins+ e_{22}\otimes S)$.
The point-norm continuous path of embeddings $\theta_t: E\to M_2(E)$ for $t\in [0,1]$ given by
$\theta_t(x)=R_t(e_{11}\otimes x)R_t^*$ then satisfies $\theta_0=e_{11}\otimes\theta$ and $\theta_1=e_{11}\otimes\id_E$.
\end{proof}

We remind the reader of the standing assumptions from \autoref{standing-assumptions}, which still applies to this section and the subsequent section.

\begin{lemma} \label{lem:weak-absorption-K-theory}
Suppose that $(\phi,\Iu), (\psi,\Iv): (A,\alpha)\to(\CM(B),\beta)$ are two cocycle representations and that $(\phi,\Iu)$ weakly absorbs $(\psi,\Iv)$.
Consider the embedding 
\[
\id\oplus 0: \SD^w_{(\phi,\Iu)}\to \SD^w_{(\phi,\Iu)\oplus(\psi,\Iv)} \text{ given by } x\mapsto x\oplus 0 \text{ (Cuntz addition).}
\]
Then $K_*(\id\oplus 0)$ is an isomorphism of abelian groups.
Furthermore $\id\oplus 0$ descends to embeddings $\SD_{(\phi,\Iu)}\to \SD_{(\phi,\Iu)\oplus(\psi,\Iv)}$ and $\SC_{(\phi,\Iu)}\to \SC_{(\phi,\Iu)\oplus(\psi,\Iv)}$ that also induce isomorphisms in $K$-theory.
\end{lemma}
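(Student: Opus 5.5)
The plan is to realize $\id\oplus 0$ as a corner embedding and then transport it back by the weak-absorption unitary, so that it is one half of a conjugation by an isometry, to which \autoref{lem:isometry-map} applies.

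Write $(\theta,\Iw)=(\phi,\Iu)\oplus_{t_1,t_2}(\psi,\Iv)$. Weak absorption gives a unitary $W\in\CU(\CM(B))$ with $\phi\equiv_B \ad(W)\circ\theta$ and $\Iu_g\equiv_B W\Iw_g\beta_g(W)^*$. By \autoref{rem:D-infinite-repeat}, $\ad(W^*)$ restricts to isomorphisms
\[
\SD^w_{(\phi,\Iu)}\cong\SD^w_{(\theta,\Iw)},\qquad \SD_{(\phi,\Iu)}\cong\SD_{(\theta,\Iw)}.
\]
The same holds for $\SC$, since the defining conditions of $\SC_{(\phi,\Iu)}$ are also invariant under perturbation modulo $B$ and unitary conjugation. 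It therefore suffices to show that the composite
\[
\Theta:=\ad(W)\circ(\id\oplus 0):\ \SD^w_{(\phi,\Iu)}\to\SD^w_{(\phi,\Iu)}
\]
induces the identity on $K$-theory, together with its analogues for $\SD$ and $\SC$. Once this holds, $K_*(\id\oplus 0)$ has the left inverse $K_*(\ad(W^*))^{-1}$ composed with the identity. It is moreover bijective, because it equals $K_*(\ad(W^*))\circ K_*(\Theta)$ and both factors are isomorphisms.

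The map $\Theta$ has the form $x\mapsto SxS^*$ with $S=Wt_1\in\CM(B)$ an isometry. To invoke \autoref{lem:isometry-map}, I would take $E=\CM(B)$; then $S\in\CM(E)=E$ and $\theta(x)=SxS^*$ is an endomorphism of $E$. The lemma requires $\SD^w_{(\phi,\Iu)}$ (respectively $\SD$, $\SC$) to be an ideal of some ambient algebra, which fails in $\CM(B)$. I will therefore apply the lemma with $E$ equal to the $C^*$-algebra itself, and check that $S$ lies in, or multiplies, that algebra.

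The key check is that $S$ belongs to $\SD_{(\phi,\Iu)}\subseteq\SD^w_{(\phi,\Iu)}$. For the commutator condition, note $t_1\theta(a)=\phi(a)t_1$, so
\[
S\theta(a)W^*\cdots
\]
more cleanly,
\[
S\phi(a)=Wt_1\phi(a)=W\theta(a)t_1\equiv_B \phi(a)Wt_1=\phi(a)S .
\]
Here I use $t_1\phi(a)=\theta(a)t_1$ and $W\theta(a)\equiv_B\phi(a)W$, the latter following from $\phi\equiv_B\ad(W)\circ\theta$.

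For the invariance condition, $t_1$ is $\beta$-fixed and $\Iw_g t_1=t_1\Iu_g$, so $\beta^{\Iw}_g(t_1)=t_1\Iu_g^*\cdots$; cleanly,
\[
\beta^\Iu_g(Wt_1)=\Iu_g\beta_g(W)t_1\Iu_g^*\equiv_B W\Iw_g t_1\Iu_g^*=Wt_1 .
\]
Here I use $\Iu_g\beta_g(W)\equiv_B W\Iw_g$. Thus $S\in\SD_{(\phi,\Iu)}$. Since $\SC_{(\phi,\Iu)}$ is an ideal in $\SD_{(\phi,\Iu)}$, the element $S$ is a multiplier of $\SC_{(\phi,\Iu)}$ and of each of the two unital algebras. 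Applying \autoref{lem:isometry-map} to each of $\SD^w$, $\SD$ and $\SC$ (taking $E$ to be that algebra, $I=E$ and $J=0$) then gives $K_*(\Theta)=\id$.

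The main obstacle is confirming that $\ad(W)$ really maps $\SD^w_{(\theta,\Iw)}$ into $\SD^w_{(\phi,\Iu)}$. The weak condition involves products $\phi(a)(\beta^\Iu_g(x)-x)$, and I need to verify that it transforms correctly under perturbation of the cocycle modulo $B$. I would handle this by using the identity $\beta^\Iu_g(WxW^*)\equiv_B W\beta^\Iw_g(x)W^*$, which holds exactly modulo $B$ because $\Iu_g\beta_g(W)-W\Iw_g\in B$. The remaining steps are routine.
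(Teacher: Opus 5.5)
Your proposal is correct and follows the paper's proof essentially verbatim. The paper also factors $\ad(Ur_1)$ through $\id\oplus 0$ followed by the genuine isomorphism $\ad(U)$ from \autoref{rem:D-infinite-repeat}, and checks $Ur_1\in\SD_{(\phi,\Iu)}$ by the same two computations modulo $B$. It then applies \autoref{lem:isometry-map} to get $K_*(\ad(Ur_1))=\id$, and handles $\SD^w_{(\phi,\Iu)}$ and the ideal $\SC_{(\phi,\Iu)}$ in the same way.
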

\begin{proof}
Let $r_1,r_2\in\CM(B)^\beta$ be two isometries with $r_1r_1^*+r_2r_2^*=\eins$.
It shall be understood that Cuntz sums in this proof are formed with this pair.
By assumption, there exists a unitary $U\in\CM(B)$ such that $\ad(U)\circ(\phi\oplus\psi,\Iu\oplus\Iv)$ agrees with $(\phi,\Iu)$ modulo $B$.
The conjugation map $\kappa=\ad(Ur_1)$ then induces an embedding that is a composition
\[
\kappa: \SD_{(\phi,\Iu)} \stackrel{\ad(r_1)}{\longrightarrow} \SD_{(\phi\oplus\psi,\Iu\oplus\Iv)} \stackrel{\ad(U)}{\longrightarrow} \SD_{(\phi,\Iu)}.
\]
Since we have for all $a\in A$ that
\[
\phi(a)Ur_1 \equiv_B U(\phi\oplus\psi)(a)r_1 = Ur_1\phi(a) 
\]
and for all $g\in G$ that
\[
\beta_g^\Iu(Ur_1)= \Iu_g\beta_g(U) r_1 \Iu_g^* \equiv_B U (\Iu_g\oplus\Iv_g) r_1 \Iu_g^* = Ur_1,
\]
it follows that $Ur_1\in\SD_{(\phi,\Iu)}\subseteq\SD_{(\phi,\Iu)}^w$.
This element is obviously an isometry.
It follows from \autoref{lem:isometry-map} that $\kappa$ induces isomorphisms on $K$-theory.
Since the map $\ad(U)$ above is a genuine isomorphism (\autoref{rem:D-infinite-repeat}), it follows that $\ad(r_1)=\id\oplus 0: \SD_{(\phi,\Iu)}\to \SD_{(\phi,\Iu)\oplus(\psi,\Iv)}$ induces an isomorphism on $K$-theory as well.
It persists at the level of ideals such as the inclusion $\SC_{(\phi,\Iu)}\to \SC_{(\phi,\Iu)\oplus(\psi,\Iv)}$.
The same follows for $\SD_{(\phi,\Iu)}^w$.
\end{proof}

The next result is the first of two Paschke-type duality results, as discussed in the introduction.
A more refined version of a Paschke duality result, which is of interest mainly for \cstar-dynamics, will be explored in the next section.

\begin{theorem} \label{thm:Paschke-1}
Let $\alpha: G\curvearrowright A$ be an action on a separable \cstar-algebra and $\beta: G\curvearrowright B$ a strongly stable action on a $\sigma$-unital \cstar-algebra.
Let $\FC$ be an eligible set of cocycle representations $(A,\alpha)\to (\CM(B),\beta)$, and let $(\Phi,\IU)\in\FC$ be an absorbing element.
Given any $U\in\CU(\SD_{(\Phi,\IU)})$, we consider the Cuntz pair $\xi(U)\in \IE^G(\FC;\alpha,\beta)$ given by $\ad(U)\circ(\Phi,\IU)$ and $(\Phi,\IU)$.
Then the assignment $\Xi: K_1(\SD_{(\Phi,\IU)})\to \IE^G(\FC;\alpha,\beta)/_{\sim_{\FC,h}}$ given by $\Xi([U])=[\xi(U)]$ defines an isomorphism of abelian groups.
\end{theorem}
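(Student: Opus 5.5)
The plan is to use proper infiniteness and the $K_1$-injectivity of $\SD_{(\Phi,\IU)}$ to read $K_1$ through unitaries of $\SD_{(\Phi,\IU)}$ itself, and then check that $\Xi$ is well defined, additive, surjective and injective, in that order.

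\textbf{Well-definedness.} Since $(\Phi,\IU)$ is absorbing, it absorbs $(\Phi,\IU)^\infty$, so it contains itself at infinity by \autoref{prop:almost-infinite-repeat}. By \autoref{rem:D-infinite-repeat} and \autoref{prop:infinite-repeat}, $\SD:=\SD_{(\Phi,\IU)}$ then contains a sequence of isometries $r_n\in(\CM(B)\cap\Phi(A)')^{\beta^\IU}$ with $\sum_n r_nr_n^*=\eins$. Hence $\SD$ is properly infinite, in Cuntz standard form, and $M_k(\SD)\cong\SD$ via the $r_n$. Consequently every class in $K_1(\SD)$ is represented by a unitary of $\SD$. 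By \autoref{thm:K1-injectivity} with $\CJ=0$, two unitaries of $\SD$ with the same $K_1$-class are homotopic in $\CU(\SD)$. It therefore suffices to check the following:
\begin{enumerate}[leftmargin=5mm,label={$\bullet$}]
\item $\xi(U)$ is a $(\FC;\alpha,\beta)$-Cuntz pair. The identities $U\Phi(a)U^*-\Phi(a)=[U,\Phi(a)]U^*\in B$ and $U\IU_g\beta_g(U)^*-\IU_g=(U-\beta^\IU_g(U))\beta^{\IU}_g(U)^*\IU_g\in B$ give the Cuntz pair condition, and $\FC$ is closed under unitary equivalence.
\item A norm-continuous path $U_t$ in $\CU(\SD)$ yields an element of $\IE^G(\FC[0,1];\alpha,\beta[0,1])$ connecting $\xi(U_0)$ and $\xi(U_1)$.
\end{enumerate}

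\textbf{Additivity.} The class $[U]+[V]$ is represented by $U\oplus V\in\SD_{(\Phi,\IU)\oplus(\Phi,\IU)}$, transported back to $\SD$ via $\ad(W)$. Here $W\in\CU(\CM(B))$ is a unitary with $\ad(W)\circ((\Phi,\IU)\oplus(\Phi,\IU))\equiv(\Phi,\IU)$ modulo $B$ (absorption), and $\ad(W)$ gives an isomorphism as in \autoref{rem:D-infinite-repeat}. I would then verify $[\xi(U)]+[\xi(V)]=[\xi(W(U\oplus V)W^*)]$ in two steps:
\begin{enumerate}[leftmargin=5mm,label={$\bullet$}]
\item Conjugating both entries of a Cuntz pair by the same unitary of $\CM(B)$ does not change its $\FC$-homotopy class. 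Connect that unitary to $\eins$ by a norm path (Cuntz--Higson); the differences stay in $B$ along the path, and all representations stay in $\FC$.
\item Cuntz pairs whose entries differ modulo $B$ by the same perturbation may be compared by a rotation argument, exactly as in the proof that Cuntz addition is well defined.
\end{enumerate}

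\textbf{Surjectivity.} By \autoref{prop:standard-form-KK-classes}\ref{prop:standard-form-KK-classes:1}, every class has the form $[(\phi,\Iu),(\Phi,\IU)]$ with $(\phi,\Iu)\in\FC$ absorbing. Both $(\phi,\Iu)$ and $(\Phi,\IU)$ absorb each other, so $(\phi,\Iu)\asymp(\Phi,\IU)$, witnessed by a path $U_t$ of unitaries. The conditions in \autoref{def:various-equivalences} say precisely that each $U_t\in\SD$ and that $\ad(U_t)\circ(\Phi,\IU)\equiv(\phi,\Iu)$ modulo $B$. I would then show $[(\phi,\Iu),(\Phi,\IU)]=[\xi(U_0)]$. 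Writing the class as the sum $[(\phi,\Iu),\ad(U_0)(\Phi,\IU)]+[\xi(U_0)]$, the first summand is killed by the homotopy obtained by reparametrising $t\in[0,\infty]$ to $[0,1]$: the endpoint at infinity is $(\phi,\Iu)$ itself, and the asymptotic conditions give continuity there.

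\textbf{Injectivity}, which I expect to be the main step. Suppose $[\xi(U)]=0$.
\begin{enumerate}[leftmargin=5mm,label={$\bullet$}]
\item By the usual manipulation, the zero class can be realised by a homotopy to a degenerate pair of the shape required in \autoref{lem:KKarantine}. That lemma then gives $(\kappa,\Ix)\in\FC$ and $Z\in\CU_0(\SD_{(\Phi,\IU)\oplus(\kappa,\Ix)})$ with $\ad(Z)\circ((\Phi,\IU)\oplus(\kappa,\Ix))=\ad(U\oplus\eins)\circ((\Phi,\IU)\oplus(\kappa,\Ix))$.
\item Replace $(\kappa,\Ix)$ by $(\kappa,\Ix)\oplus((\Phi,\IU)\oplus(\kappa,\Ix))^\infty$, which lies in $\FC$ by $\sigma$-additivity; operator homotopy persists after adding a summand. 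Then $(\Theta,\IT):=(\Phi,\IU)\oplus(\kappa,\Ix)$ is, up to a unitary in $\CM(B)^\beta$, an infinite repeat.
\item The unitary $(U\oplus\eins)^*Z$ commutes exactly with $\Theta(A)$ and is fixed by $\beta^{\IT}$. By \autoref{prop:infinite-repeat}, the algebra $(\CM(B)\cap\Theta(A)')^{\beta^\IT}$ has vanishing $K$-theory.
\item Hence $[U\oplus\eins]=[Z]=0$ in $K_1(\SD_{(\Theta,\IT)})$.
\item Finally, $(\Phi,\IU)$ absorbs, hence weakly absorbs, $(\kappa,\Ix)$, so \autoref{lem:weak-absorption-K-theory} shows that $\id\oplus0$ is injective on $K_1$. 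This gives $[U]=0$.
\end{enumerate}
The delicate points are bookkeeping: reducing an arbitrary null-homotopy to the configuration of \autoref{lem:KKarantine}, and checking that the enlargement of $\kappa$ keeps everything inside $\FC$.
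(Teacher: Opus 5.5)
Your proposal is correct and is essentially the paper's proof. You identify $K_1(\SD_{(\Phi,\IU)})$ with $\CU(\SD_{(\Phi,\IU)})/\CU_0(\SD_{(\Phi,\IU)})$ via proper infiniteness and \autoref{thm:K1-injectivity}, get surjectivity from absorption (routing it through \autoref{prop:standard-form-KK-classes} just packages the computation the paper does by hand with $V_0W_0^*$), and get injectivity exactly as the paper does via \autoref{lem:KKarantine}, enlargement to an infinite repeat, \autoref{prop:infinite-repeat} and \autoref{lem:weak-absorption-K-theory}.

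The only deviation is additivity. There the paper avoids your transport through $\ad(W)$ by using $[\ad(UV)\circ(\Phi,\IU),(\Phi,\IU)]=[\ad(UV)\circ(\Phi,\IU),\ad(U)\circ(\Phi,\IU)]+[\ad(U)\circ(\Phi,\IU),(\Phi,\IU)]$ together with conjugation invariance. One harmless slip: your isometries lie in $\SD_{(\Phi,\IU)}$ but not in $(\CM(B)\cap\Phi(A)')^{\beta^\IU}$ unless $(\Phi,\IU)$ is itself an infinite repeat.
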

\begin{proof}
Let us first consider the map $\Omega: \CU(\SD_{(\Phi,\IU)})\to \IE^G(\FC;\alpha,\beta)/_{\sim_{\FC,h}}$ given by $\Omega(U)=[\xi(U)]$.
By \autoref{thm:K1-injectivity}, $\SD_{(\Phi,\IU)}$ is a properly infinite $K_1$-injective \cstar-algebra, hence the canonical map $\CU(\SD_{(\Phi,\IU)})/\CU_0(\SD_{(\Phi,\IU)})\to K_1(\SD_{(\Phi,\IU)})$ is an isomorphism.
Therefore it suffices to show that $\Omega$ is a well-defined surjective group homomorphism with kernel $\CU_0(\SD_{(\Phi,\IU)})$.

The fact that $\xi$ and therefore also $\Omega$ is well-defined is due to the definition of $\SD_{(\Phi,\IU)}$.
For any $U\in\CU(\SD_{(\Phi,\IU)})$, the cocycle representation $\ad(U)\circ(\Phi,\IU)$ belongs to $\FC$ and is equal to $(\Phi,\IU)$ modulo $B$, thus indeed $\xi(U)\in\IE^G(\FC;\alpha,\beta)$.
The fact that $\Omega$ is a group homomorphism follows from the following calculation for $U,V\in\CU(\SD_{(\Phi,\IU)})$:
\[
\begin{array}{ccl}
\Omega(UV) &=& \big[ \ad(UV)\circ(\Phi,\IU), (\Phi,\IU) \big] \\
&=& \big[ \ad(UV)\circ(\Phi,\IU), \ad(U)\circ(\Phi,\IU) \big]+\big[ \ad(U)\circ(\Phi,\IU), (\Phi,\IU) \big] \\
&=& \big[ \ad(V)\circ(\Phi,\IU), (\Phi,\IU) \big]+\big[ \ad(U)\circ(\Phi,\IU), (\Phi,\IU) \big]\\
&=& \Omega(U)+\Omega(V).
\end{array}
\]
For the surjectivity, let $\big( (\psi,\Iv), (\theta,\Iy) \big)\in \IE^G(\FC;\alpha,\beta)$ be a Cuntz pair.
We shall employ an argument analogous to the proof of \cite[Corollary 4.17]{GabeSzabo25}.
Since $(\Phi,\IU)$ is an absorbing element of $\FC$, we have equivalences
\[
(\Phi,\IU)\oplus(\theta,\Iy) \asymp (\Phi,\IU)\asymp (\Phi,\IU)\oplus(\psi,\Iv).
\]
Let $V: [0,\infty)\to\CU(\CM(B))$ be a norm-continuous path satisfying
\begin{itemize}
  	\item $\Phi(a) = \dst\lim_{t\to\infty} V_t (\Phi(a)\oplus\theta(a)) V_t^*$ for all $a\in A$;
  	\item $\Phi(a) - V_t (\Phi(a)\oplus\theta(a)) V_t^* \in B$ for all $a\in A$ and $t\geq 0$;
  	\item $\dst\lim_{t\to\infty} \max_{g\in K}\ \| \IU_g - U_t (\IU_g\oplus\Iy_g) \beta_g(U_t)^* \| =0$ for all compact sets $K\subseteq G$;
  	\item $\IU_g - U_t (\IU_g\oplus\Ix_g) \beta_g(U_t)^* \in B$ for all $t\geq 0$ and $g\in G$.
  \end{itemize} 
From these properties we can read off that the degenerate $(\FC;\alpha,\beta)$-Cuntz pair $\big( (\Phi,\IU), (\Phi,\IU) \big)$ is $\FC$-homotopic to $\big( \ad(V_0)\circ(\Phi\oplus\theta,\IU\oplus\Iy), (\Phi,\IU) \big)$.
Analogously, we find a norm-continuous path $W: [0,\infty)\to\CU(\CM(B))$ such that $\ad(W_t)\circ(\Phi\oplus\psi,\IU\oplus\Iv)$ converges to $(\Phi,\IU)$ in the same manner.
Note that since $(\theta,\Iy)$ and $(\psi,\Iv)$ agree modulo $B$, the same is true for
\[
\begin{array}{cll}
(\Phi,\IU) &\equiv_B & \ad(V_0)\circ(\Phi\oplus\theta,\IU\oplus\Iy) \\
&\equiv_B& \ad(V_0)\circ(\Phi\oplus\psi,\IU\oplus\Iv) \\
&\equiv_B& \ad(V_0W_0^*)\circ(\Phi,\IU).
\end{array}
\]
This means that the unitary $V_0W_0^*$ belongs to $\SD_{(\Phi,\IU)}$.
We compute in $\IE^G(\FC;\alpha,\beta)/_{\sim_{\FC,h}}$ that
\[
\begin{array}{ccl}
\big[ (\psi,\Iv), (\theta,\Iy) \big] &=& \big[ (\Phi\oplus\psi,\IU\oplus\Iv), (\Phi\oplus\theta,\Phi\oplus\Iy) \big] \\
&=& \big[ \ad(W_0^*)\circ(\Phi,\IU), (\Phi\oplus\psi,\IU\oplus\Iv) \big]  \\
&& +\big[ (\Phi\oplus\psi,\IU\oplus\Iv), (\Phi\oplus\theta,\Phi\oplus\Iy) \big] \\
&& + \big[ (\Phi\oplus\theta,\Phi\oplus\Iy), \ad(V_0^*)\circ(\Phi,\IU) \big] \\
&=& \big[ \ad(W_0^*)\circ(\Phi,\IU), \ad(V_0^*)\circ(\Phi,\IU) \big] \\
&=& \big[ \ad(V_0W_0^*)\circ(\Phi,\IU), (\Phi,\IU) \big] \ = \ \Omega(V_0W_0^*).
\end{array}
\]
Since the pair $\big( (\psi,\Iv), (\theta,\Iy) \big)\in \IE^G(\FC;\alpha,\beta)$ was arbitrary, we see that $\Omega$ is surjective.

Lastly, let us determine the kernel of $\Omega$.
By \autoref{rem:op-hom-implies-hom} we have that $\CU_0(\SD_{(\Phi,\IU)})$ is contained in the kernel.
We claim equality.
Suppose that $U\in\CU(\SD_{(\Phi,\IU)})$ is given with $\Omega(U)=0$.
This means that the $(\FC;\alpha,\beta)$-Cuntz pair $\big( \ad(U)\circ(\Phi,\IU), (\Phi,\IU) \big)$ is $\FC$-homotopic to the degenerate pair $\big( (\Phi,\IU), (\Phi,\IU) \big)$.
By \autoref{lem:KKarantine}, it follows that there exists an element $(\kappa,\Ix)\in\FC$ such that $(\Phi\oplus\kappa,\IU\oplus\Ix)$ and $\ad(U\oplus\eins)\circ(\Phi\oplus\kappa,\IU\oplus\Ix)$ are operator homotopic.
Without loss of generality, by adding more to $(\kappa,\Ix)$ if necessary, we may assume that $(\Phi\oplus\kappa,\IU\oplus\Ix)$ is an infinite repeat.
There exists $W\in\CU_0(\SD_{(\Phi\oplus\kappa,\IU\oplus\Ix)})$ such that
\[
\ad(W(U\oplus \eins))\circ(\Phi\oplus\kappa,\IU\oplus\Ix) = (\Phi\oplus\kappa,\IU\oplus\Ix).
\]
In other words, we have that $W(U\oplus\eins)\in \big( \CM(B)\cap(\Phi\oplus\kappa)(A)' \big)^{\beta^{\IU\oplus\Ix}}$.
By \autoref{prop:infinite-repeat}, the $K_1$-class of $W(U\oplus\eins)$ in $\SD_{(\Phi\oplus\kappa,\IU\oplus\Ix)}$ is therefore trivial.
As $W$ is homotopic to the unit, we have $[U\oplus\eins]=0$ in $K_1(\SD_{(\Phi\oplus\kappa,\IU\oplus\Ix)})$.
By \autoref{lem:weak-absorption-K-theory}, we must have $[U]=0$ in $K_1(\SD_{(\Phi,\IU)})$ and hence $U\in\CU_0(\SD_{(\Phi,\IU)})$ with \autoref{thm:K1-injectivity}.
This finishes the proof.
\end{proof}

Before we state our main result, we state an important technical lemma that we will use in its proof.

\begin{lemma}[see {\cite[Corollary 5.4]{GabeSzabo25}}] \label{lem:saue}
Let $\alpha: G\curvearrowright A$ and $\beta: G\curvearrowright B$ be two actions on \cstar-algebras such that $A$ is separable and $B$ is $\sigma$-unital.
Let $(\phi,\Iu), (\psi,\Iv): (A,\alpha)\to (\CM(B),\beta)$ be two cocycle representations.
Suppose that there exists a norm-continuous path of unitaries $U: [0,\infty)\to\CU(\SD_{(\phi,\Iu)})$ with $U_0=\eins$ and
\[
\lim_{t\to\infty} \Big( \|\psi(a)-U_t\phi(a)U_t^*\| + \max_{g\in K} \|\Iv_g-U_t\Iu_g\beta_g(U_t)^*\| \Big) = 0
\]
for all $a\in A$ and every compact set $K\subseteq G$.
Then $(\phi,\Iu)$ and $(\psi,\Iv)$ are strongly asymptotically unitarily equivalent.
\end{lemma}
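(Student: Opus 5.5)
The plan is to replace the multiplier unitaries $U_t\in\SD_{(\phi,\Iu)}$ by unitaries in $\CU(\eins+B)$. I would do this by compressing $U_t$ with a quasicentral approximate unit $e_t$ that moves to infinity as $t\to\infty$, and then taking the unitary part of the resulting invertible element. The point is that $U_t\phi(a)U_t^*-\phi(a)\in B$, so far out in $B$ the difference between $\ad(U_t)$ and the identity becomes invisible. Hence conjugating by $e_t^{1/2}U_te_t^{1/2}+(\eins-e_t)$ should act on $\phi(a)$ like $\ad(U_t)$, while the element itself lies in $\eins+B$.

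\emph{Setup.} Fix increasing compact sets $K_n\subseteq G$ exhausting $G$ and increasing finite sets $F_n\subset A$ of contractions with dense union in the unit ball of $A$. For each $n$, the set $\{U_t\mid t\in[0,n+1]\}$ is norm compact, and so is $\{\Iu_g\beta_g(U_t)\mid g\in K_n\}$. Together with $\phi(A)$ and the $\Iu_g$, these generate a separable $\cstar$-subalgebra of $\CM(B)$, so \autoref{lem:Kasparov} applies. Using it (together with \autoref{prop:universal-constants} to pass to square roots), I would choose positive contractions $e^{(n)}\in B$ with the following properties:
\begin{enumerate}[leftmargin=5mm,label={$\bullet$}]
\item $e^{(n)}$ increases to $\eins$ strictly;
\item the commutators of $(e^{(n)})^{1/2}$ and $(\eins-e^{(n)})^{1/2}$ with $U_t$ ($t\in[0,n+1]$), with $\phi(F_n)$ and with $\Iu_g$ ($g\in K_n$) have norm at most $2^{-n}$, and the same holds for $\|e^{(n)}-\beta_g(e^{(n)})\|$ with $g\in K_n$;
\item $\|(\eins-e^{(n)})(U_t\phi(a)U_t^*-\phi(a))\|\leq 2^{-n}$ for all $a\in F_n$ and $t\in[0,n+1]$. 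This is possible because these differences lie in $B$ and form a norm-compact set.
\end{enumerate}
The cocycle analogue of the last condition, namely $\|(\eins-e^{(n)})(U_t\Iu_g\beta_g(U_t)^*-\Iu_g)\|\leq 2^{-n}$ for $g\in K_n$ and $t\in[0,n+1]$, is arranged in the same way. It uses $U_t-\beta_g^\Iu(U_t)\in B$.

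Set $e_t$ to be the linear interpolation between $e^{(n)}$ and $e^{(n+1)}$ for $t\in[n,n+1]$. Then define $Y_t=e_t^{1/2}U_te_t^{1/2}+(\eins-e_t)\in\eins+B$ and $v_t=Y_t|Y_t|^{-1}$. Since $U_t$ almost commutes with $e_t$ (with error at most $1/4$ uniformly in $t$), $Y_t$ is close to a unitary, hence invertible, and $v_t\in\CU(\eins+B)$ depends norm-continuously on $t$. Also $v_0=\eins$ because $U_0=\eins$. The estimate I would verify is
\[
v_t\phi(a)v_t^*\approx e_tU_t\phi(a)U_t^*+(\eins-e_t)\phi(a)
= U_t\phi(a)U_t^*+(\eins-e_t)\big(\phi(a)-U_t\phi(a)U_t^*\big)\approx U_t\phi(a)U_t^*
\]
up to errors of order $2^{-n}$ for $a\in F_n$ and $t\ge n$. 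Combined with the hypothesis, this gives $v_t\phi(a)v_t^*\to\psi(a)$. The same computation, now using approximate $\beta$-invariance of $e_t$ and the cocycle analogue above, yields $v_t\Iu_g\beta_g(v_t)^*\to\Iv_g$ uniformly on compacta.

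The "modulo $B$" conditions are automatic. Since each $U_t\phi(a)U_t^*-\phi(a)$ lies in $B$ and $B$ is norm closed, $\psi(a)-\phi(a)\in B$, and similarly $\Iv_g-\Iu_g\in B$. Because $v_t\in\eins+B$, both $v_t\phi(a)v_t^*-\phi(a)$ and $v_t\Iu_g\beta_g(v_t)^*-\Iu_g$ lie in $B$, so $\psi(a)-v_t\phi(a)v_t^*\in B$ and $\Iv_g-v_t\Iu_g\beta_g(v_t)^*\in B$.

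I expect the main obstacle to be the bookkeeping that makes the estimates uniform along the continuous parameter $t$ rather than only at integer times. The interpolated $e_t$ must still almost commute with $U_t$ for $t\in[n,n+1]$, which is why each $e^{(n)}$ is chosen quasicentral for the whole compact piece $\{U_s\mid s\le n+1\}$. The error terms in the approximation of $v_t$ by $Y_t$ must also be controlled. For this I would bound $\|Y_t^*Y_t-\eins\|$ by a fixed multiple of the commutator errors, so that $\|v_t-Y_t\|\to 0$.
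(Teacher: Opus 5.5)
Your proposal has a genuine gap at its central step. The element $Y_t=e_t^{1/2}U_te_t^{1/2}+(\eins-e_t)$ is in general \emph{not} close to a unitary, however quasicentral $e_t$ is. Up to commutator errors one has $Y_t\approx e_tU_t+(\eins-e_t)$, and hence
\[
Y_t^*Y_t\approx \eins-e_t(\eins-e_t)\,(2\cdot\eins-U_t-U_t^*).
\]
This is far from $\eins$ wherever the spectral region $\{0<e_t<1\}$ meets the part where $U_t$ is far from $\eins$. For instance, $\psi=\phi$, $\Iv=\Iu$ and $U_t=e^{i\pi\min\{t,1\}}\eins$ satisfy your hypotheses. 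For $t\geq 1$ this gives $Y_t=\eins-2e_t$, which is not even invertible once $\frac12$ lies in the spectrum of $e_t$. This happens at $t=n+\frac12$ if the $e^{(n)}$ are projections, and for all large $t$ if, say, $B=\CC_0(\IR)\otimes\CK$. In the commutative picture you are joining $\eins$ (where $e_t=0$) to $U_t$ (where $e_t=1$) by the straight segment $s\mapsto(1-s)\eins+sU_t$. That segment leaves the unitary group, and passing to the polar part cannot repair this.

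This is a topological obstruction, not a loose estimate, and it shows where $U_0=\eins$ really enters. Your argument uses $U_0=\eins$ only to get $v_0=\eins$. It would therefore apply verbatim to a constant path $U_t\equiv U$ with arbitrary $U\in\CU(\SD_{(\phi,\Iu)})$, producing $v_t\in\CU(\eins+B)$ with $v_t\phi(a)v_t^*\to U\phi(a)U^*$. That is false already in the following situation:
\begin{itemize}
\item $G=\{1\}$, $A=\IC$, $B=\CK(\ell^2(\IZ))$;
\item $\phi(1)=P$, the projection onto $\ell^2(\IN)$;
\item $U$ is the bilateral shift.
\end{itemize}
Here $UPU^*-P$ has rank one, and $UPU^*$ has essential codimension $\pm1$ relative to $P$. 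On the other hand, $vPv^*$ has essential codimension $0$ for every $v\in\CU(\eins+\CK)$, and essential codimension is locally constant in norm. This is also why \autoref{thm:KK-uniqueness} must first establish $U_1\in\CU_0(\SD_{(\phi,\Iu)})$.

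The missing idea is to interpolate between $\eins$ and $U_t$ along the given path $s\mapsto U_{st}$ rather than along a segment, i.e.\ to build a unitary that is informally ``$U_{t\cdot e_t}$''. Concretely:
\begin{itemize}
\item take a fine partition $0=s_0<\dots<s_N=1$;
\item take continuous $g_k\geq0$ on $[0,1]$ with $\sum_kg_k=1$, $\operatorname{supp}(g_k)\subseteq[s_{k-1},s_{k+1}]$ and $g_0(0)=1$;
\item set $d_k=g_k(e_t)^{1/2}$ and consider $\sum_{k=0}^N d_kU_{s_kt}d_k$.
\end{itemize}
Since $d_kd_j=0$ for $|k-j|\geq2$, near-unitarity follows from quasicentrality together with smallness of $\|U_{s_kt}-U_{s_{k+1}t}\|$. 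The identity $U_{s_0t}=U_0=\eins$ is exactly what puts this element in $\eins+B$.

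With that replacement, the remaining bookkeeping you describe is the right kind of estimate: the transition region of $e_t$ must lie far out relative to the compact sets $\{U_s\phi(a)U_s^*-\phi(a)\}_{s\leq t}$ and their cocycle analogues, and continuity in $t$ must be controlled. Note that the paper does not reprove this lemma; it imports it from \cite[Corollary 5.4]{GabeSzabo25}.
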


This brings us to the main result of the article, which is a generalized dynamical $KK$-uniqueness theorem that improves upon the dynamical stable uniqueness theorem \cite[Theorem 6.4]{GabeSzabo25}.
It also entails the uniqueness theorem for nuclear and/or ideal-related $KK$-theory as treated in \cite[Theorem 13.9]{Gabe24}.
The content of this section also represents a somewhat more self-contained approach to answer \cite[Question 5.17]{CGSTW23} in the positive, as well as an assortment of conceivable variants of that question for other versions of $KK$-theory.
Restricting its statement to the case of separable \cstar-algebras, one obtains \autoref{theorem:C}.

\begin{theorem} \label{thm:KK-uniqueness}
Let $G$ be a second-countable, locally compact group.
Let $\alpha: G\curvearrowright A$ be an action on a separable \cstar-algebra and $\beta: G\curvearrowright B$ a strongly stable action on a $\sigma$-unital \cstar-algebra.
Let $\FC$ be an eligible set of cocycle representations $(A,\alpha)\to (\CM(B),\beta)$.
Let $(\phi,\Iu), (\psi,\Iv)\in\FC$ be two absorbing elements that form an anchored $(\FC;\alpha,\beta)$-Cuntz pair.
Then $[(\phi,\Iu),(\psi,\Iv)]=0$ in $KK^G(\FC;\alpha,\beta)$ if and only if $(\phi,\Iu)$ and $(\psi,\Iv)$ are strongly asymptotically unitarily equivalent.
\end{theorem}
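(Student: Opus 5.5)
The plan is to treat the two directions separately. The ``if'' direction is formal, and the ``only if'' direction is reduced to \autoref{thm:Paschke-1}, \autoref{thm:K1-injectivity} and \autoref{lem:saue}.

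For the ``if'' direction, suppose $v:[0,\infty)\to\CU(\eins+B)$ is a path with $v_0=\eins$ implementing the strong asymptotic unitary equivalence. Reparametrizing $[0,\infty]$ as $[0,1]$, the formula $t\mapsto\ad(v_t)\circ(\phi,\Iu)$, extended by $(\psi,\Iv)$ at $t=\infty$, gives a cocycle representation into $\CM(B[0,1])$. Each fibre is unitarily equivalent to $(\phi,\Iu)$ or equal to $(\psi,\Iv)$, so it lies in $\FC$. Pairing this family with the constant family $(\phi,\Iu)$ yields an $(\FC[0,1];\alpha,\beta[0,1])$-Cuntz pair, because $v_t\in\eins+B$ forces the differences to lie in $B$, with norm continuity at the endpoint coming from the limit conditions. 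This pair connects the degenerate pair $((\phi,\Iu),(\phi,\Iu))$ to $((\psi,\Iv),(\phi,\Iu))$. Hence the class vanishes, and after a sign flip so does $[(\phi,\Iu),(\psi,\Iv)]$. The anchoring is irrelevant in this direction.

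For the ``only if'' direction, both $(\phi,\Iu)$ and $(\psi,\Iv)$ are absorbing, so $(\phi,\Iu)\asymp(\psi,\Iv)$ via $(\phi\oplus\psi)$. This gives a path $U_t\in\CU(\CM(B))$ with $\psi\equiv_B\ad(U_t)\circ\phi$ and asymptotic convergence. The next step is to replace this by a unitary $U_0^*$ relating the two modulo $B$. The element $V:=U_0^*$ then satisfies $\ad(V^*)\circ(\psi,\Iv)\equiv_B(\phi,\Iu)$. Since $(\psi,\Iv)\equiv_B(\phi,\Iu)$ by the Cuntz pair assumption, $V^*$ commutes with $\phi$ modulo $B$ and is $\beta^\Iu$-invariant modulo $B$, so $U_0\in\CU(\SD_{(\phi,\Iu)})$.

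The proof then proceeds as follows.
\begin{enumerate}[leftmargin=*,label=\textup{(\arabic*)}]
\item By \autoref{thm:Paschke-1} applied to the absorbing element $(\phi,\Iu)$, the $K_1$-class of $U_0$ (or of $U_0^*$) corresponds to $[\ad(U_0)\circ(\phi,\Iu),(\phi,\Iu)]$.
\item Since $\ad(U_t)\circ(\phi,\Iu)$ defines an $\FC$-homotopy (the homotopy argument of the ``if'' part, now with multiplier unitaries, which is fine because differences still lie in $B$), this class equals $[(\psi,\Iv),(\phi,\Iu)]$.
\item That class vanishes in $KK^G(\FC;\alpha,\beta)$. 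The pair is anchored, and by \autoref{def:generalized-KK-groups} anchored classes embed into $\IE^G(\FC;\alpha,\beta)/_{\sim_{h,\FC}}$, so it vanishes there as well.
\item Hence $[U_0]=0$ in $K_1(\SD_{(\phi,\Iu)})$.
\item By \autoref{thm:K1-injectivity} (the case $\CJ=0$), we get $U_0\in\CU_0(\SD_{(\phi,\Iu)})$.
\end{enumerate}
Choose a path $W$ in $\CU(\SD_{(\phi,\Iu)})$ from $\eins$ to $U_0$, and concatenate it with $t\mapsto U_t$. This gives a norm-continuous path in $\CU(\SD_{(\phi,\Iu)})$ starting at $\eins$ and asymptotically conjugating $(\phi,\Iu)$ to $(\psi,\Iv)$. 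Each $U_t$ lies in $\SD_{(\phi,\Iu)}$ by the same modulo-$B$ computation as for $U_0$. \autoref{lem:saue} then yields strong asymptotic unitary equivalence.

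The main obstacle is the bookkeeping in the ``only if'' direction. One must check that the homotopy class produced by the absorption path really is the Paschke class of $U_0$ with the correct orientation, and that the anchoring hypothesis lets the vanishing in $KK^G(\FC;\alpha,\beta)$, which is a quotient by $H_\beta$, lift to vanishing in $\IE^G(\FC;\alpha,\beta)/_{\sim_{h,\FC}}$. The latter uses the splitting from the forgetful map: the class lies in $\ker(\Ff)$, and that kernel maps isomorphically onto $KK^G(\FC;\alpha,\beta)$.
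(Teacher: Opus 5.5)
Your proposal is correct and follows essentially the same route as the paper. Absorption gives an asymptotic path $U_t$ lying in $\SD_{(\phi,\Iu)}$, and anchoring lets the vanishing pass to $\IE^G(\FC;\alpha,\beta)/_{\sim_{h,\FC}}$; then \autoref{thm:Paschke-1} and \autoref{thm:K1-injectivity} put the initial unitary in $\CU_0(\SD_{(\phi,\Iu)})$, and \autoref{lem:saue} finishes after prepending a path from $\eins$. The differences are cosmetic: you orient the path to conjugate $\phi$ to $\psi$, matching \autoref{lem:saue} directly, whereas the paper conjugates $\psi$ to $\phi$ using $\SD_{(\phi,\Iu)}=\SD_{(\psi,\Iv)}$; you also spell out the ``if'' direction, which the paper calls trivial, and the hypothesis of \autoref{thm:K1-injectivity} holds because $(\phi,\Iu)$ absorbs $(\phi,\Iu)^\infty\in\FC$.
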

\begin{proof}
Since the ``if'' part is trivial, we only need to prove the ``only if'' part.
Assume $[(\phi,\Iu),(\psi,\Iv)]=0$ in $KK^G(\FC;\alpha,\beta)$.
Since the pair was anchored, this is equivalent to $[(\phi,\Iu),(\psi,\Iv)]=0$ in the group $\IE^G(\FC;\alpha,\beta)/_{\sim_{\FC,h}}$.
Since $(\phi,\Iu)$ and $(\psi,\Iv)$ were both absorbing elements of $\FC$, we have 
\[
(\phi,\Iu)\asymp(\phi,\Iu)\oplus(\psi,\Iv)\asymp(\psi,\Iv).
\]
Choose a norm-continuous path $U: [1,\infty)\to\CU(\CM(B))$ satisfying
\begin{enumerate}[leftmargin=5mm,label={$\bullet$}]
  	\item $\phi(a) = \dst\lim_{t\to\infty} U_t \psi(a) U_t^*$ for all $a\in A$;
  	\item $\phi(a) - U_t \psi(a) U_t^* \in B$ for all $a\in A$ and $t\geq 1$;
  	\item $\dst\lim_{t\to\infty} \max_{g\in K}\ \| \Iu_g - U_t \Iv_g \beta_g(U_t)^* \| =0$ for all compact sets $K\subseteq G$;
  	\item $\Iu_g - U_t \Iv_g \beta_g(U_t)^* \in B$ for all $t\geq 1$ and $g\in G$.
  \end{enumerate} 
Given that $(\phi,\Iu)$ and $(\psi,\Iv)$ agree modulo $B$, the second and the fourth items imply that $U_t\in\SD_{(\phi,\Iu)}$ for all $t\geq 1$.
This path induces the equality $0=[(\phi,\Iu),(\psi,\Iv)]=[\ad(U_1)\circ(\psi,\Iv),(\psi,\Iv)]$.
In light of \autoref{thm:Paschke-1}, this means $0=[U_1]$ in $K_1(\SD_{(\phi,\Iu)})$, which by \autoref{thm:K1-injectivity} means that $U_1\in\CU_0(\SD_{(\Phi,\IU)})$.
We may thus extend the given path $(U_t)_{t\geq 1}$ to a norm-continuous path $U: [0,\infty)\to\CU(\SD_{(\phi,\Iu)})$ with $U_0=\eins$.
This path satisfies the conditions required by \autoref{lem:saue} and hence it follows that $(\phi,\Iu)$ and $(\psi,\Iv)$ are strongly asymptotically unitarily equivalent.
\end{proof}

Complementary to \autoref{thm:original-KK-uniqueness}, we end this section by recording a number of immediate corollaries that follow from the main result in conjunction with the discussion in \autoref{ex:generalized-KK-examples}.

\begin{cor} \label{cor:dynamical-KK-uniqueness}
Let $A$ be a separable \cstar-algebra and $B$ a $\sigma$-unital stable \cstar-algebra.
Let $\alpha: G\curvearrowright A$ and $\beta: G\curvearrowright B$ be two actions of a second-countable, locally compact group.
Let $(\phi,\Iu), (\psi,\Iv): (A,\alpha)\to(\CM(B),\beta)$ be two absorbing cocycle representations that form an equivariant $(\alpha,\beta)$-Cuntz pair.
If $[(\phi,\Iu), (\psi,\Iv)]=0$ in $KK^G(\alpha,\beta)$, then $(\phi,\Iu)$ and $(\psi,\Iv)$ are strongly asymptotically unitarily equivalent. 
\end{cor}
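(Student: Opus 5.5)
The plan is to deduce the corollary from \autoref{thm:KK-uniqueness}, applied to the eligible set $\FC$ of \emph{all} cocycle representations $(A,\alpha)\to(\CM(B),\beta)$. The only real work is to get into the standing assumptions of \autoref{standing-assumptions}: here $\beta$ is an arbitrary action on a stable $\sigma$-unital \cstar-algebra, and need not be strongly stable. Moreover, the Cuntz pair in the statement is not assumed to be anchored.

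First I would reduce to the strongly stable case. By \cite[Proposition 1.4]{GabeSzabo24}, $\beta$ is cocycle conjugate to a strongly stable action. Concretely, there is a $\beta$-cocycle $\Iw$ such that $\beta^{\Iw}$ is conjugate to a strongly stable action $\gamma$. Composing with $(\id_B,\Iw^*)$-type transfers, cocycle representations into $(\CM(B),\beta)$ correspond bijectively to cocycle representations into $(\CM(B),\beta^\Iw)$ via $(\phi,\Iu)\mapsto(\phi,\Iu\Iw_g^*)$ (suitably ordered so that the cocycle identity holds). I would check that this correspondence preserves the following:
\begin{itemize}
\item Cuntz pairs;
\item homotopy classes, hence the groups $KK^G$ (it is the isomorphism induced in $KK^G$ by a cocycle conjugacy);
\item absorption, since the relation $\asymp$ is defined with conditions that transform equivariantly, because $U\Iu_g\beta_g(U)^*\Iw_g^*=U(\Iu_g\Iw_g^*)\beta^\Iw_g(U)^*$;
\item strong asymptotic unitary equivalence.
\end{itemize}
Afterwards one transports along the conjugacy $(B,\beta^\Iw)\cong(B,\gamma)$. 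So without loss of generality $\beta$ is strongly stable.

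Second, I would address anchoring. With $\FC$ the set of all cocycle representations, $KK^G(\FC;\alpha,\beta)=KK^G(\alpha,\beta)$ by \autoref{ex:generalized-KK-examples}. By \autoref{def:KKG}, the $KK^G$-class of an arbitrary Cuntz pair is its image modulo $H_\beta$. So $[(\phi,\Iu),(\psi,\Iv)]=0$ in $KK^G$ means that the class in $\IE^G/\sim_h$ equals that of the cocycle pair $(\Iu,\Iv)$.

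If the pair is anchored, \autoref{thm:KK-uniqueness} applies directly. In general, I expect the vanishing hypothesis is meant with the paper's convention that $KK^G$ is computed from anchored pairs. Otherwise I would argue as follows. In the proof of \autoref{thm:KK-uniqueness}, take the unitary $U_1\in\SD_{(\phi,\Iu)}$ with $[\ad(U_1)\circ(\psi,\Iv),(\psi,\Iv)]=[(\phi,\Iu),(\psi,\Iv)]$. By \autoref{thm:Paschke-1}, its $K_1$-class corresponds to an element of $\IE^G/\sim_h\cong H_\beta\oplus KK^G$. The hypothesis kills the $KK^G$-component, but the $H_\beta$-component is $[(\Iu,\Iv)]$.

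This is the main obstacle. Strong asymptotic unitary equivalence forces $(\Iu,\Iv)\sim_h(\eins,\eins)$, since the path $U_t\in\CU(\eins+B)$ with $U_0=\eins$ yields a homotopy of cocycle pairs. Hence the anchored condition is necessary. I would therefore read the corollary with the Cuntz pair implicitly anchored, consistent with \autoref{theorem:C}, and note this explicitly.

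Under that reading, the proof is the following. Take $\FC$ to be the set of all cocycle representations. It is eligible, since it is trivially downward closed and contains all $(0,\Iu)$. Absorbing cocycle representations are exactly the absorbing elements of $\FC$. Then \autoref{thm:KK-uniqueness} yields strong asymptotic unitary equivalence, which transfers back along the cocycle conjugacy of the first step.
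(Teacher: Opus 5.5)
Your proposal follows the paper's own route: the paper records this corollary as an immediate consequence of \autoref{thm:KK-uniqueness} applied to the eligible set $\FC$ of all cocycle representations, for which $KK^G(\FC;\alpha,\beta)=KK^G(\alpha,\beta)$ by \autoref{ex:generalized-KK-examples}. Your caveat is also correct: strong asymptotic unitary equivalence forces $(\Iu,\Iv)\sim_h(\eins,\eins)$, so the hypothesis must be read with the pair anchored (equivalently, as vanishing of its class already in $\IE^G(\alpha,\beta)/{\sim_h}$).

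Your cocycle-conjugacy reduction to strongly stable $\beta$ is an extra step the paper does not make, since it tacitly works with strongly stable $\beta$ as in \autoref{standing-assumptions}. The reduction does work, but to transfer absorption you must also check compatibility with Cuntz sums, not only with $\asymp$. This compatibility holds: $(\phi,\Iu)\mapsto(\phi,\Iu\Iw^*)$ sends a Cuntz sum formed with $\beta$-fixed isometries $t_1,t_2$ to the one formed with $\beta^{\Iw}$-fixed isometries $s_1,s_2$, after conjugating by the unitary $s_1t_1^*+s_2t_2^*$.
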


\begin{cor} \label{cor:nuclear-KK-uniqueness}
Let $A$ be a separable \cstar-algebra and $B$ a $\sigma$-unital stable \cstar-algebra.
Let $\phi,\psi: A\to\CM(B)$ be two weakly nuclear and nuclearly absorbing $*$-homomorphisms that form a Cuntz pair.
If $[\phi,\psi]=0$ in $KK_{\mathrm{nuc}}(A,B)$, then $\phi$ and $\psi$ are strongly asymptotically unitarily equivalent. 
\end{cor}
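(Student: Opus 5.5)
The plan is to deduce the statement directly from \autoref{thm:KK-uniqueness} in the case $G=\{1\}$, with $\FC$ the set of all weakly nuclear representations $A\to\CM(B)$, using \autoref{ex:generalized-KK-examples}(2). For trivial $G$ the group $H_\beta$ is trivial and every Cuntz pair is automatically anchored. So once the hypotheses of \autoref{thm:KK-uniqueness} are verified, the vanishing of $[\phi,\psi]$ in $KK(\FC;A,B)\cong KK_\nuc(A,B)$ gives strong asymptotic unitary equivalence.

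The first step is to check that $\FC$ is an eligible set, which for trivial $G$ just means downward closed. Suppose $\psi\wc\FC$, so that each compression $b^*\psi(\cdot)b$ is a point-norm limit of finite sums $\sum_{k,j} c_{j,k}^*\phi^{(k)}(\cdot)c_{j,k}$ with $\phi^{(k)}$ weakly nuclear. Each such finite sum is a nuclear c.p.\ map $A\to B$, since each compression $c^*\phi^{(k)}(\cdot)c$ is nuclear and finite sums of nuclear maps are nuclear. Point-norm limits of nuclear c.p.\ maps are nuclear, so $b^*\psi(\cdot)b$ is nuclear for every contraction $b\in B$, hence for every $b\in B$ by scaling. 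Therefore $\psi$ is weakly nuclear. Since we may assume $B$ stable and $\sigma$-unital, it is also strongly stable for the trivial action, so \autoref{standing-assumptions} holds.

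The second step is to identify ``nuclearly absorbing'' with being an absorbing element of $\FC$ in the sense of \autoref{def:absorption}. I take this to be the definition: $\phi$ absorbs every weakly nuclear $\theta$ via $\phi\oplus\theta\asymp\phi$. This matches the usage in \cite{Gabe24}, where the asymptotic version is available for separable $A$. Under this reading, $\phi$ and $\psi$ are absorbing elements of $\FC$ forming an $(\FC;A,B)$-Cuntz pair.

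The last step is to identify $KK(\FC;A,B)$ with $KK_\nuc(A,B)$, as asserted in \autoref{ex:generalized-KK-examples}(2) with reference to \cite[Section 12]{Gabe24}, so that $[\phi,\psi]=0$ in $KK_\nuc(A,B)$ means the pair is $\FC$-homotopic to a degenerate one. This is the only point needing care. The identification must send the class of $(\phi,\psi)$ to the nuclear Kasparov class of the same pair, and the homotopies in \cite{Gabe24} must be through weakly nuclear representations of $B[0,1]$ at every fibre. I expect this to follow by comparing definitions, since weak nuclearity into $\CM(B[0,1])$ is equivalent to fibrewise weak nuclearity for separable $A$. With these checks in place, \autoref{thm:KK-uniqueness} applies verbatim and gives the conclusion.
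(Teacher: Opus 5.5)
Your proposal is correct and follows the paper's route: the paper records this corollary as an immediate consequence of \autoref{thm:KK-uniqueness} for $G=\{1\}$, taking $\FC$ to be the set of weakly nuclear representations and using the identification $KK(\FC;A,B)\cong KK_\nuc(A,B)$ from \autoref{ex:generalized-KK-examples}(2). You also check explicitly that $\FC$ is downward closed, via closure of nuclear maps under compressions, finite sums and point-norm limits, and that weak nuclearity into $\CM(B[0,1])$ is a fibrewise condition (for instance by a partition-of-unity argument over $[0,1]$); these checks fill in details the paper leaves implicit.
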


\begin{cor} \label{cor:ideal-related-KK-uniqueness}
Let $X$ be a topological space.
Let $A$ be a separable $X$-\cstar-algebra and $B$ a $\sigma$-unital stable $X$-\cstar-algebra.
Let $\phi,\psi: A\to\CM(B)$ be two weakly $X$-equivariant $*$-homomorphisms that form a Cuntz pair.
\begin{enumerate}[label=\textup{(\roman*)},leftmargin=*]
	\item \label{cor:ideal-related-KK-uniqueness:1}
	Suppose that $\phi$ and $\psi$ are both absorbing elements in the set of weakly $X$-equivariant $*$-homomorphisms.
	If $[\phi,\psi]=0$ in $KK(X;A,B)$, then $\phi$ and $\psi$ are strongly asymptotically unitarily equivalent. 
	\item \label{cor:ideal-related-KK-uniqueness:2}
	Suppose that $\phi$ and $\psi$ are both weakly nuclear and absorbing elements in the set of weakly nuclear and weakly $X$-equivariant $*$-homom\-orphisms.
	If $[\phi,\psi]=0$ in $KK_{\mathrm{nuc}}(X;A,B)$, then $\phi$ and $\psi$ are strongly asymptotically unitarily equivalent. 
	\end{enumerate}
\end{cor}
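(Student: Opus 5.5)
The plan is to deduce both parts of \autoref{cor:ideal-related-KK-uniqueness} directly from \autoref{thm:KK-uniqueness}, taking $G=\{1\}$ and using the eligible sets from \autoref{ex:generalized-KK-examples}(3). For part \ref{cor:ideal-related-KK-uniqueness:1} we set $\FC=\FC_X$, the set of all weakly $X$-equivariant representations $A\to\CM(B)$. For part \ref{cor:ideal-related-KK-uniqueness:2} we set $\FC=\FC_X^\nuc$, the weakly nuclear and weakly $X$-equivariant ones.

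First, I would check that both sets are eligible. For $G=\{1\}$, eligibility just means being downward closed under weak containment. Suppose $\kappa\wc\FC_X$. Then for a contraction $b$ and $a\in A(U)$, the element $b^*\kappa(a)b$ is a norm limit of finite sums $\sum_j c_j^*\phi_j(a)c_j$ with $\phi_j\in\FC_X$. Each such sum lies in $B(U)$, and $B(U)$ is closed, so $\kappa\in\FC_X$.

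Weak nuclearity also passes to the limit. Recall that $\phi$ is weakly nuclear if $a\mapsto b^*\phi(a)b$ is nuclear for every $b\in B$. Point-norm limits of finite sums of nuclear c.p.\ maps are nuclear, because the nuclear maps form a point-norm closed convex cone. So $\FC_X^\nuc$ is downward closed as well. The paper asserts both facts in the example, so I would simply cite it.

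Second, I would identify the groups. Stability of $B$ gives strong stability of the trivial action, so the standing assumptions hold. Every Cuntz pair is automatically anchored, since $H_\beta$ is trivial for $G=\{1\}$: cocycle pairs are just $(\eins,\eins)$. By the example, $KK(\FC_X;A,B)=KK(X;A,B)$ and $KK(\FC_X^\nuc;A,B)=KK_\nuc(X;A,B)$, via \cite[Proposition 2.28]{Gabe24} and \cite[Sections 10--12]{Gabe24}. These identifications send the class of the Cuntz pair $(\phi,\psi)$ to the corresponding Kirchberg class. The hypotheses say exactly that $\phi,\psi$ are absorbing elements of the respective eligible set.

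Applying \autoref{thm:KK-uniqueness} then gives strong asymptotic unitary equivalence. The only genuine point to watch is that the identification in Gabe's work is realized by the same Cuntz-pair class, and not merely by an abstract isomorphism. I expect this to hold by construction of the Cuntz picture there, so the proof is essentially bookkeeping.
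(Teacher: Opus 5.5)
Your proposal is correct and follows the paper's route exactly. The paper gives no separate proof: it records this corollary as an immediate consequence of \autoref{thm:KK-uniqueness} for $G=\{1\}$, applied to the eligible sets $\FC_X$ and $\FC_X^{\nuc}$ from \autoref{ex:generalized-KK-examples}, with the identification with Kirchberg's groups taken from Gabe's work. Your extra checks are sound and only spell out what the paper calls immediate: downward closedness, automatic anchoring (the only cocycle of the trivial group is $\eins$), and that stability of $B$ is strong stability of the trivial action.
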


%%%

\section{A generalized Paschke-type duality}

This section is devoted to proving a generalized Paschke-type duality for \cstar-dynamics that unifies earlier results of a similar nature.
We note, however, that the content of this section is vacuous for nonequivariant $KK$-theory, i.e., the case $G=\{1\}$, since it does not add anything beyond the statement of \autoref{thm:Paschke-1}.
We start with some preparatory observations.

\begin{lemma} \label{lem:D-phi-w-normcont}
Let $(\phi,\Iu): (A,\alpha)\to (\CM(B),\beta)$ be a cocycle representation and $x\in\SD^w_{(\phi,\Iu)}$.
Then for every $a\in A$, the map $g\mapsto\phi(a)\beta_g^\Iu(x)$ is norm-continuous.
\end{lemma}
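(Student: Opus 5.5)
Let $x\in\SD^w_{(\phi,\Iu)}$ and $a\in A$. The plan is to reduce the claim to norm-continuity of maps of the form $g\mapsto y\,\beta^\Iu_g(x)$ with $y\in B$, and to get that from the fact that strict continuity becomes norm continuity once one multiplies by a fixed element of $B$. We need continuity at an arbitrary $g_0$; replacing $a$ by $\alpha_{g_0}^{-1}(a)$ and using the cocycle identity should reduce this to continuity at $g_0=1$. We may also assume $a=a_1a_2$ by Cohen factorization, or simply approximate, since the maps $g\mapsto\phi(a)\beta^\Iu_g(x)$ are uniformly bounded by $\|a\|\|x\|$; norm-limits, uniform in $g$, of continuous maps are continuous.

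Next we rewrite the expression using the equivariance relation $\phi(a)=\beta^\Iu_g(\phi(\alpha_g^{-1}(a)))$. Since $x\in\SD^w_{(\phi,\Iu)}$, we have
\[
\phi(a)\beta^\Iu_g(x)-\phi(a)x\in B \quad\text{for all } g\in G.
\]
So $\phi(a)\beta^\Iu_g(x)=\phi(a)x+c_g$ with $c_g\in B$, and it suffices to show $g\mapsto c_g=\phi(a)(\beta^\Iu_g(x)-x)$ is norm-continuous.

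To handle this we use an approximate unit $e_n$ of $B$ that is approximately $\beta^\Iu$-invariant, as provided by \autoref{lem:Kasparov}, to split $c_g\approx e_n c_g+(\eins-e_n)c_g$. The first term is $e_n\phi(a)\beta^\Iu_g(x)-e_n\phi(a)x$. Since $e_n\phi(a)\in B$ and $g\mapsto\beta^\Iu_g(x)$ is strictly continuous ($\beta$ is point-norm continuous on $B$ hence strictly on $\CM(B)$, and $\Iu$ is strictly continuous), the map $g\mapsto e_n\phi(a)\beta^\Iu_g(x)$ is norm-continuous.

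The main obstacle is the tail $(\eins-e_n)c_g$: we need $\sup_{g\in K}\|(\eins-e_n)c_g\|\to0$ for compact $K$ near $1$. For each fixed $g$ this holds since $c_g\in B$, but uniformity is the issue. I would first try writing $c_g=\beta^\Iu_g\big(\phi(\alpha_g^{-1}(a))(x-\beta^\Iu_{g^{-1}}(x))\big)$, i.e.\ $c_g=\beta^\Iu_g(c'_{g})$ with $c'_g\in B$, and combining this with approximate invariance of $e_n$ and a compactness argument. Concretely, cover $K$ by finitely many small neighbourhoods: near $g_1$, the difference $c_g-c_{g_1}=\phi(a)(\beta^\Iu_g(x)-\beta^\Iu_{g_1}(x))$ is small after multiplying on the left by $e_n$ and, symmetrically, one should control it on the right by using $\phi(a)=\phi(a_1)\phi(a_2)$ and a two-sided estimate. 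If this becomes delicate, the fallback is to prove directly that $g\mapsto\phi(a)\beta^\Iu_g(x)b$ and $g\mapsto b\phi(a)\beta^\Iu_g(x)$ are norm-continuous for $b\in B$, and then to upgrade using the fact that these elements lie in $B$ modulo a fixed element, together with a uniform approximate-unit estimate on compact sets.
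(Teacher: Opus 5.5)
Your preliminary reductions are fine: continuity at $g_0$ reduces to continuity at $1$ after replacing $a$ by $\alpha_{g_0}^{-1}(a)$, and $g\mapsto e_n\phi(a)\beta^\Iu_g(x)$ is norm-continuous by strict continuity. But there is a genuine gap exactly at the step you call the main obstacle. The uniform tail estimate $\sup_{g\in K}\|(\eins-e_n)c_g\|\to 0$ is the whole content of the lemma, and none of your suggestions proves it.

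\begin{itemize}
\item The finite-cover argument is circular. Choosing neighbourhoods of points $g_1\in K$ on which $(\eins-e_n)(c_g-c_{g_1})$ is uniformly small already presupposes norm-continuity at $g_1$.
\item The fallback only recovers what you already know, namely strict continuity of $g\mapsto c_g$. A strictly continuous $B$-valued map need not be norm-continuous: think of a bump in $c_0$ sliding off to infinity as $t\to 0$.
\item Approximate $\beta^\Iu$-invariance of $e_n$ does not help either, because writing $c_g=\beta^\Iu_g(c_g')$ just reproduces the same problem for $c_g'$.
\end{itemize}

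What is missing is an automatic-continuity input that genuinely uses local compactness of $G$, i.e.\ a Baire category argument. Compactness of $K$ alone cannot supply the uniformity.

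The paper proceeds as follows. Since $\phi(a)=\beta^\Iu_g(\phi(\alpha_g^{-1}(a)))$, the claim is equivalent to norm-continuity of the orbit maps $g\mapsto\beta^\Iu_g(m)$ for $m=\phi(a)x$. By L.~G.~Brown's theorem \cite[Theorem 2.1]{Brown00}, it then suffices to check continuity of the image of $m$ in $\CQ(B)$. This is immediate: $\phi(A)(\beta^\Iu_g(x)-x)\subseteq B$ gives $\bar\beta^\Iu_g(\bar\phi(a)\bar x)=\bar\phi(\alpha_g(a))\bar x$, so the distance between the $g$- and $h$-translates is at most $\|x\|\|\alpha_g(a)-\alpha_h(a)\|$.

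If you prefer not to cite Brown, the Baire argument for this $m$ is short.
\begin{itemize}
\item Given $\eps>0$, pick an open neighbourhood $V$ of $1$ with compact closure on which $\dist(\beta^\Iu_g(m)-m,B)\le\|x\|\|\alpha_g(a)-a\|<\eps$.
\item For any approximate unit $(e_n)$ of $B$, set $F_N=\{g\in V : \|(\eins-e_n)(\beta^\Iu_g(m)-m)(\eins-e_n)\|\le 2\eps \text{ for all } n\ge N\}$. These sets are relatively closed, because the norm is lower semicontinuous in $g$ by strict continuity. They cover $V$, so some $F_N$ contains a nonempty open set $O$.
\item For $g,g_1\in O$ put $y=\beta^\Iu_g(m)-\beta^\Iu_{g_1}(m)$. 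Splitting $y=e_Ny+(\eins-e_N)ye_N+(\eins-e_N)y(\eins-e_N)$ gives $\|y\|\le\|e_Ny\|+\|ye_N\|+4\eps$. The first two terms tend to $0$ as $g\to g_1$.
\item Since $\|y\|=\|\beta^\Iu_{g_1^{-1}g}(m)-m\|$, this yields $\limsup_{h\to 1}\|\beta^\Iu_h(m)-m\|\le 4\eps$.
\end{itemize}

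This final translation step is why you should work with orbit maps of a single multiplier $m$ rather than with your $c_g$. Under translation, the element $a$ appearing in $c_g$ changes, so the Baire argument would not transfer back to the point you need.
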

\begin{proof}
Let $h\in G$ and $a\in A$.
We then have $\lim_{g\to h} \alpha_g(\alpha_h^{-1}(a))= a$ in norm and therefore
\[
\lim_{g\to h} \phi(a)\beta_g^\Iu(x) = \lim_{g\to h} \beta_g^\Iu(\phi(\alpha_h^{-1}(a))x).
\]
Since $a$ and $h$ were arbitrary and $\alpha$ is point-norm continuous on $A$, we see that the claim is equivalent to saying that the assignment $g\mapsto\beta^\Iu_g(\phi(a)x)$ is norm-continuous for every $a\in A$.
Let $\CQ(B)$ denote the corona algebra of $B$, let $\bar{\beta}^\Iu$ be the algebraic $G$-action induced by $\beta^\Iu$, let $\bar{\phi}: A\to\CQ(B)$ be the induced homomorphism, and write $\bar{x}=x+B\in\CQ(B)$.
By \cite[Theorem 2.1]{Brown00}, it suffices to show that the map $g\mapsto\bar{\beta}^\Iu_g(\bar{\phi}(a)\bar{x})$ is norm-continuous as a map into $\CQ(B)$.
The condition $[x,\phi(A)]\subseteq B$ means $\bar{x}\in\CQ(B)\cap\bar{\phi}(A)'$.
The condition $\phi(A)(\beta^\Iu_g(x)-x)\subseteq B$ for all $g\in G$ means that the element
\[
x^\sharp=\bar{x}+(\CQ(B)\cap\bar{\phi}(A)^\perp) \in (\CQ(B)\cap\bar{\phi}(A)')/(\CQ(B)\cap\bar{\phi}(A)^\perp)
\]
is in the fixed point algebra of the action induced by $\bar{\beta}^\Iu$, which we shall denote by $E$.
We have an induced (algebraic) $G$-equivariant $*$-homomor\-phism $\kappa: (A\otimes_{\max} E,\alpha\otimes\id)\to(\CQ(B),\bar{\beta}^\Iu)$ given by 
\[
\kappa\big( a\otimes \big( y+(\CQ(B)\cap\bar{\phi}(A)^\perp) \big) \big)= \bar{\phi}(a)y
\] 
for all $a\in A$ and $y\in\CQ(B)\cap\bar{\phi}(A)'$ with $y+(\CQ(B)\cap\bar{\phi}(A)^\perp)\in E$.
By equivariance of $\kappa$, it follows for all $g,h\in G$ that
\[
\begin{array}{ccl}
\|\bar{\beta}_g^\Iu(\bar{\phi}(a)\bar{x}) - \bar{\beta}_h^\Iu(\bar{\phi}(a)\bar{x})\| &=& \|\kappa\big( (\alpha_g(a)-\alpha_h(a) )\otimes x^\sharp \big) \| \\
&\leq& \| (\alpha_g(a)-\alpha_h(a))\otimes x^\sharp \| \\
&\leq& \|x\| \|\alpha_g(a)-\alpha_h(a)\|.
\end{array}
\]
This shows the desired continuity property and finishes the proof. 
\end{proof}

\begin{prop} \label{prop:same-quotients}
Let $(\phi,\Iu): (A,\alpha)\to(\CM(B),\beta)$ be a cocycle representation.
Keeping in mind \autoref{nota:D-phi} and \autoref{nota:C-phi}, one has $\SC_{(\phi,\Iu)}=\SD_{(\phi,\Iu)}\cap\SC_\phi$.
The resulting inclusion of quotients $\SB_{(\phi,\Iu)}=\SD_{(\phi,\Iu)}/\SC_{(\phi,\Iu)} \subseteq \SD^w_{(\phi,\Iu)}/\SC_{\phi}$ is an isomorphism.
\end{prop}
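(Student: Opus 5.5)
The equality $\SC_{(\phi,\Iu)}=\SD_{(\phi,\Iu)}\cap\SC_\phi$ is a direct comparison of definitions. An element of $\SD_{(\phi,\Iu)}\cap\SC_\phi$ satisfies $x\phi(A)\cup\phi(A)x\subseteq B$ and $x-\beta^\Iu_g(x)\in B$, which is exactly the definition of $\SC_{(\phi,\Iu)}$. Conversely, $x\phi(A)\cup\phi(A)x\subseteq B$ forces $[x,\phi(A)]\subseteq B$. We also have $\SD_{(\phi,\Iu)}\subseteq\SD^w_{(\phi,\Iu)}$ and $\SC_\phi\triangleleft\SD^w_{(\phi,\Iu)}$. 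By the second isomorphism theorem, the induced map
\[
\SD_{(\phi,\Iu)}/\SC_{(\phi,\Iu)}\to\SD^w_{(\phi,\Iu)}/\SC_\phi
\]
is therefore a well-defined injective $*$-homomorphism. Everything thus reduces to surjectivity: for every $x\in\SD^w_{(\phi,\Iu)}$ we must find $y\in\SC_\phi$ with $x-y\in\SD_{(\phi,\Iu)}$.

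The plan is to cut $x$ by a multiplier $E\in\CM(B)$, $0\leq E\leq\eins$, which plays the role of ``the support of $\phi(A)$ modulo $B$''. We will require $E$ to satisfy the following conditions.

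(a) $E\phi(a)-\phi(a)\in B$ for all $a\in A$.

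(b) $[E,\phi(A)]\subseteq B$ and $[E,x]\in B$.

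(c) $E-\beta^\Iu_g(E)\in B$ for all $g$.

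(d) $E(x-\beta^\Iu_g(x))\in B$ for all $g\in G$.

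Given such $E$, set $y=(\eins-E)x$. Then $\phi(a)y$ and $y\phi(a)$ lie in $B$: for $y\phi(a)$ use (a) directly, and for $\phi(a)y$ use (b) to move $\phi(a)$ past $\eins-E$ and then (a). Hence $y\in\SC_\phi$. Next, $Ex$ commutes with $\phi(A)$ modulo $B$ by (b). Finally,
\[
Ex-\beta^\Iu_g(Ex)=E(x-\beta^\Iu_g(x))+(E-\beta^\Iu_g(E))\beta^\Iu_g(x)\in B
\]
by (c) and (d), so $Ex\in\SD_{(\phi,\Iu)}$, as required.

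To build $E$, I would take an increasing approximate unit $(u_n)$ of $A$ with $u_{n+1}u_n=u_n$ that is approximately $\alpha$-invariant on compact sets; this exists by \autoref{lem:Kasparov} applied to $A$ itself. I would also take an approximate unit $(e_k)$ of $B$ that is quasicentral for $\phi(A)$, $x$ and $\phi(u_n)$, and approximately $\beta^\Iu$-invariant, again by \autoref{lem:Kasparov}. Then I would define the strict sum
\[
E=\sum_k (e_k-e_{k-1})^{1/2}\,\phi(u_{n_k})\,(e_k-e_{k-1})^{1/2},
\]
with rapidly growing indices $n_k$, in the spirit of Kasparov's technical theorem and of the construction of $R_1$ in the proof of \autoref{thm:K1-injectivity}. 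Conditions (a), (b) and (c) then follow from the standard estimates: the commutator errors and invariance errors are summable, and $\phi(u_{n_k})\phi(a)\to\phi(a)$ in norm. Throughout, \autoref{prop:universal-constants} controls the square roots.

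The main obstacle is condition (d), which must hold uniformly for $g$ in compact sets. The $k$-th term of $E(x-\beta^\Iu_g(x))$ is, up to summable errors, $(e_k-e_{k-1})^{1/2}\phi(u_{n_k})(x-\beta^\Iu_g(x))(e_k-e_{k-1})^{1/2}$. Here $\phi(u_{n_k})(x-\beta^\Iu_g(x))$ lies in $B$ for each fixed $g$, by the definition of $\SD^w_{(\phi,\Iu)}$. By \autoref{lem:D-phi-w-normcont}, it also depends norm-continuously on $g$, so for each fixed $n$ the set
\[
\{\phi(u_n)(x-\beta^\Iu_g(x)) \mid g\in K\}
\]
is norm-compact in $B$. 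Therefore, after fixing $n_k$, we can choose $e_{k-1}$ so large that $(\eins-e_{k-1})\phi(u_{n_k})(x-\beta^\Iu_g(x))$ has norm at most $2^{-k}$ uniformly on $K_k$. This makes the tail of the series norm-small, so the whole sum lies in $B$.

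The delicate part is interleaving the choices of $n_k$ and $e_k$ so that all of (a)–(d) hold simultaneously. I would handle this by an induction of exactly the same shape as in the proof of \autoref{thm:K1-injectivity}, choosing $n_k$ first and then $e_{k-1}$, $e_k$ far enough out.
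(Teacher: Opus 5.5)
Your argument is correct, but it takes a more hands-on route than the paper. The paper does not build the cut-off element itself. It applies Kasparov's technical theorem to the ideal $B$ with the action $\beta^\Iu$, taking $A_1=\phi(A)$, the bounded function $\zeta(g)=\beta^\Iu_g(x)-x$, and $A_2=\Delta=0$; \autoref{lem:D-phi-w-normcont} is used only to verify the norm-continuity hypothesis of that theorem. This yields $M+N=\eins$ with $M\phi(A)\subseteq B$, $\zeta(G)N\subseteq B$ and $N$ $\beta^\Iu$-invariant, and the paper then sets $y=xN$. Your $E$ is the paper's $N$, acting from the other side.

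Your explicit element $E=\sum_k d_k\phi(u_{n_k})d_k$ is essentially the proof of the technical theorem specialised to this situation. The norm-continuity lemma enters through compactness of $\{\phi(u_n)(x-\beta^\Iu_g(x))\}_{g\in K}$, which gives uniform tail control. What your route buys is self-containedness: only \autoref{lem:Kasparov} and \autoref{prop:universal-constants} are needed. It also makes clear that invariance of $E$ modulo $B$ is all that is required. The cost is length.

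The interleaving is also easier than you fear. The requirements on $u_{n_k}$ involve only $F_k$ and $K_k$, so all $n_k$ can be fixed in advance. Only the indices of the $e$'s are chosen inductively, with $e_{k-1}$ chosen after $n_k$. Since $d_k^2\le\eins-e_{k-1}$, you get $\|d_kc\|^2\le\|c\|\,\|(\eins-e_{k-1})c\|$, which turns your choice of $e_{k-1}$ into the needed summable tail.

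Two small corrections:
\begin{enumerate}
\item Drop the requirement $[E,x]\in B$ in (b), since it is never used. The membership $y\in\SC_\phi$ follows from (a), $[E,\phi(A)]\subseteq B$ and $[x,\phi(A)]\subseteq B$; for $y\phi(a)$, first write $(\eins-E)x\phi(a)\equiv_B(\eins-E)\phi(a)x$ and then apply (a). Moreover, $[E,x]\in B$ would not follow from summable-error estimates anyway. The terms $d_k[\phi(u_{n_k}),x]d_k$ lie in $B$ but are not norm-small, so you would need the same tail trick as in (d).
\item In (d) you also need $d_k$ to almost commute with $\beta^\Iu_g(x)$ uniformly on $K_k$. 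This follows from $\|[d_k,\beta^\Iu_g(x)]\|\le\|[d_k,x]\|+2\|x\|\,\|d_k-\beta^\Iu_g(d_k)\|$.
\end{enumerate}
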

\begin{proof}
The only thing to show is that this map is surjective.
Concretely, we have to show that for every $x\in\SD^w_{(\phi,\Iu)}$, there exists $y\in\SD_{(\phi,\Iu)}$ such that $x-y\in\SC_{\phi}$.
Given $x$, we consider the strictly continuous map $\zeta: G\to\CM(B)$ given by $\zeta(g)=\beta^\Iu_g(x)-x$.
By definition of $\SD^w_{(\phi,\Iu)}$, we have $\zeta(G)\phi(A)\subseteq B$.
Furthermore, every map of the form $\zeta(\_ )\phi(a)$ is norm-continuous by \autoref{lem:D-phi-w-normcont}.
We apply Kasparov's technical theorem \cite[Theorem 1.4]{Kasparov88} for $B$ in place of $J$ with $G$-action $\beta^\Iu$, $\phi(A)$ in place of $A_1$, $\zeta$ in place of $\phi$, and $A_2=\Delta=0$.
This yields two positive contractions $M,N\in\CM^{\beta^\Iu}(B)$ with $M+N=\eins$ satisfying $\zeta(G)N\subset B$ and $\phi(A)M\subset B$.
Clearly $M,N\in\SD_{(\phi,\Iu)}$.
The element $y=xN\in\SD_\phi$ then satisfies for all $g\in G$ that
\[
\beta^\Iu_g(y)-y\equiv_B (\beta^\Iu_g(x)-x)N = \zeta(g)N \in B.
\]
Hence $y\in\SD_{(\phi,\Iu)}$.
On the other hand, we have for all $a\in A$ that
\[
(x-y)\phi(a)=x(1-N)\phi(a)=xM\phi(a)\in B.
\]
In other words, we see that indeed $x-y\in\SC_\phi$, which finishes the proof.
\end{proof}

The main idea behind the next lemma and its corollary is analogous to a similar argument in the proof of \cite[Lemma 6.4]{Thomsen05}.
We note that the assumption below, which we impose on a cocycle representation, is an analog of (but not identical to) what Thomsen therein refers to as a ``saturated'' equivariant $*$-homomorphism.

\begin{lemma} \label{lem:general-C-K-theory}
Let $(\phi,\Iu): (A,\alpha)\to(\CM(B),\beta)$ be a cocycle representation that weakly absorbs $(0,\Iu)$ and contains itself at infinity.
Then the inclusion $\SC_{(\phi,\Iu)}\subseteq \CM^{\beta^\Iu}(B)$ induces an embedding in $K$-theory with a split.
\end{lemma}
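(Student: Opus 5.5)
We need to show that $K_*(\iota)$ for the inclusion $\iota:\SC_{(\phi,\Iu)}\subseteq\CM^{\beta^\Iu}(B)$ is injective with a left inverse. The plan is to build a map $\rho$ in the reverse direction, out of the fixed-point algebra and into something whose $K$-theory is identified with that of $\SC_{(\phi,\Iu)}$. The composition $\rho\circ\iota$ should then act as the identity on $K$-theory, by \autoref{lem:isometry-map}.

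\textbf{Constructing $\rho$.} By weak absorption of $(0,\Iu)$, there is a unitary $W\in\CU(\CM(B))$ such that $\ad(W)\circ(\phi\oplus 0,\Iu\oplus\Iu)$ agrees with $(\phi,\Iu)$ modulo $B$. Set $S=Wr_2$, where $r_1,r_2\in\CM(B)^\beta$ are the isometries defining the Cuntz sum. Exactly as in the proof of \autoref{lem:weak-absorption-K-theory}, I would check that
\[
\beta^\Iu_g(S)\equiv_B S\quad\text{and}\quad \phi(a)S\equiv_B 0 .
\]
The second relation holds because the $r_2$-corner of $\phi\oplus 0$ vanishes. Then define $\rho(x)=SxS^*$ for $x\in\CM^{\beta^\Iu}(B)$. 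I would verify that $\rho(x)\in\SC_{(\phi,\Iu)}$: we have $\phi(a)SxS^*\in B$ and $SxS^*\phi(a)\in B$, and $\beta^\Iu_g(SxS^*)-SxS^*\in B$ since $x$ is fixed and $\beta^\Iu_g(S)-S\in B$. So $\rho:\CM^{\beta^\Iu}(B)\to\SC_{(\phi,\Iu)}$ is a $*$-homomorphism.

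\textbf{Checking $K_*(\rho)\circ K_*(\iota)=\id$.} The composition $\rho\circ\iota$ is $x\mapsto SxS^*$ restricted to $\SC_{(\phi,\Iu)}$. The isometry $S$ lies in $\CM(B)$, but not in $\CM(\SC_{(\phi,\Iu)})$ a priori. The step I expect to be the main obstacle is showing that $S$ nevertheless acts as a multiplier of the ideal in a way \autoref{lem:isometry-map} can use. I would first try the following. Let $E=\SD_{(\phi,\Iu)}$, which contains $S$ by the relations above, and contains $\SC_{(\phi,\Iu)}$ as an ideal. Apply \autoref{lem:isometry-map} to the endomorphism $\ad(S)$ of $E$ and the ideal $I=\SC_{(\phi,\Iu)}$, $J=0$; this gives $K_*(\ad(S)|_{\SC_{(\phi,\Iu)}})=\id$. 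One must check that $\ad(S)$ maps $E$ into $E$, which holds since $S\in E$. Hence $K_*(\rho)\circ K_*(\iota)=\id$, so $K_*(\iota)$ is a split injection.

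\textbf{Where self-containment at infinity enters.} This hypothesis appears unused in the above; presumably it is needed for the isometry $S$ to lie in $\SD_{(\phi,\Iu)}$ in the right sense, or later in the paper to identify the cokernel (e.g.\ via \autoref{prop:infinite-repeat} giving vanishing $K$-theory of a relevant commutant). If the argument above has a gap, I would first try to fill it with this hypothesis. Concretely, I would replace $(\phi,\Iu)$ by its infinite repeat using \autoref{rem:D-infinite-repeat}, and use the isometries $r_n$ commuting with $\phi(A)$ to make the needed corrections equivariant.
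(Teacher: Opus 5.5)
Your construction of $\rho$ is correct. The isometry $S=Wr_2$ satisfies $\phi(A)S\subseteq B$, $S^*\phi(A)\subseteq B$ and $\beta^\Iu_g(S)\equiv_B S$, so $x\mapsto SxS^*$ maps $\CM^{\beta^\Iu}(B)$ into $\SC_{(\phi,\Iu)}$. Note that $\CM^{\beta^\Iu}(B)=\SC_{(0,\Iu)}$ consists of multipliers fixed only \emph{modulo} $B$, not the fixed-point algebra; otherwise it would not contain $\SC_{(\phi,\Iu)}$. Your computation survives this.

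\textbf{The gap.} It lies in the step $K_*(\ad(S)|_{\SC_{(\phi,\Iu)}})=\id$. Your claim that $S\in\SD_{(\phi,\Iu)}$ is false. Membership requires $[S,\phi(a)]\in B$, hence also $S\phi(a)\in B$. But $(S\phi(a))^*(S\phi(a))=\phi(a^*a)$, so $S\phi(a)\in B$ only when $\phi(a)\in B$. The relation $\phi(a)S\equiv_B 0$ is one-sided, and $S\in\SD_{(\phi,\Iu)}$ holds only in the degenerate case $\phi(A)\subseteq B$. Worse, $S$ does not even multiply $\SC_{(\phi,\Iu)}$ into itself: $SS^*\in\SC_{(\phi,\Iu)}$, yet $SS^*\cdot S=S\notin\SC_{(\phi,\Iu)}$. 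So no choice of $E$ in \autoref{lem:isometry-map} lets you apply it to $S$. Your fallback (infinite repeats, making corrections equivariant) does not help either. The obstruction is not equivariance: it is the failure of $S$ to commute with $\phi(A)$ modulo $B$, which already occurs for $G=\{1\}$.

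\textbf{The missing idea.} What you need is the rotation homotopy used in the paper. Put $r_{1+t}=\sqrt{1-t}\,r_1+\sqrt{t}\,r_2$ for $t\in[0,1]$.
\begin{enumerate}[leftmargin=*,label=\textup{(\arabic*)}]
\item Since $(\Iu_g\oplus\Iu_g)r_{1+t}=r_{1+t}\Iu_g$, one gets $\beta^\Iu_g(Wr_{1+t})\equiv_B Wr_{1+t}$.
\item For $x\in\SC_{(\phi,\Iu)}$ one has $\phi(a)Wr_{1+t}xr_{1+t}^*W^*\equiv_B\sqrt{1-t}\,Wr_1\phi(a)x\,r_{1+t}^*W^*\in B$, and symmetrically on the other side.
\item Hence $t\mapsto\ad(Wr_{1+t})$ is a point-norm continuous path of endomorphisms of $\SC_{(\phi,\Iu)}$, running from $\ad(Wr_1)$ to your $\rho\circ\iota$.
\end{enumerate}
This is exactly where the two-sided condition $x\phi(A)\cup\phi(A)x\subseteq B$ is used: the cross terms $r_1xr_2^*$ and $r_2xr_1^*$ are not controlled for general $x\in\SD_{(\phi,\Iu)}$.

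At $t=0$ the isometry $Wr_1$ does lie in $\SD_{(\phi,\Iu)}$, since $\phi(a)Wr_1\equiv_B W(\phi(a)\oplus 0)r_1=Wr_1\phi(a)$; this is the computation in the proof of \autoref{lem:weak-absorption-K-theory}. So \autoref{lem:isometry-map} with $E=\SD_{(\phi,\Iu)}$ and $I=\SC_{(\phi,\Iu)}$ gives $K_*(\ad(Wr_1)|_{\SC_{(\phi,\Iu)}})=\id$. Combined with the homotopy, this yields $K_*(\rho)\circ K_*(\iota)=\id$.

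After this repair, your $K_*(\rho)$ is exactly the paper's left inverse $K_*(\id\oplus 0)^{-1}\circ K_*(0\oplus\id)$, transported along $\ad(W)$. As you suspected, self-containment at infinity plays no role; the paper's proof does not use it either.
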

\begin{proof}
When comparing definitions, notice that $\SC_{(0,\Iu)}=\CM^{\beta^\Iu}(B)$.
Let $r_1,r_2\in\CM(B)^\beta$ be two isometries with $r_1r_1^*+r_2r_2^*=\eins$, which we use to form Cuntz sums.
By assumption, there exists a unitary $U\in\CM(B)$ such that $\ad(U)\circ(\phi\oplus 0,\Iu\oplus\Iu)$ agrees with $(\phi,\Iu)$ modulo $B$.
Consider the norm-continuous path of isometries given by 
\[
r_{1+t}:=\sqrt{1-t}\cdot r_1+\sqrt{t}\cdot r_2,\quad t\in [0,1].\footnote{Notice that this does not lead to conflicting notation.}
\]
Let us consider the point-norm continuous family of embeddings $\kappa_t: \SC_{(\phi,\Iu)}\to\CM(B)$ for $t\in [0,1]$ given by $\kappa_t(x)=r_{1+t} x r_{1+t}^*$.
Let $t\in [0,1]$ and $x\in\SC_{(\phi,\Iu)}$.
We notice for all $a\in A$ that
\[
\kappa_t(x)(\phi(a)\oplus 0)=r_{1+t} x\cdot \sqrt{1-t}\cdot \phi(a) r_1^* \in B.
\]
Futhermore, we have for all $g\in G$ that
\[
\begin{array}{ll}
\multicolumn{2}{l}{ \beta^{\Iu\oplus\Iu}_g(\kappa_t(x)) } \\
=& \beta^{\Iu\oplus\Iu}_g\Big( (1-t)r_1 x r_1^* + (t(1-t))^{1/2} \big( r_1xr_2^*+r_2xr_1^* \big) + t r_2 x r_2^* \Big) \\
=& (1-t)r_1 \beta_g^\Iu(x) r_1^* + (t(1-t))^{1/2} \big( r_1\beta_g^\Iu(x) r_2^*+r_2\beta_g^\Iu(x) r_1^* \big) + t r_2 \beta_g^\Iu(x) r_2^* \\
\equiv_B &  (1-t)r_1 x r_1^* + (t(1-t))^{1/2} \big( r_1 x r_2^*+r_2 x r_1^* \big) + t r_2 x r_2^* \\
=& \kappa_t(x).
\end{array}
\]
In other words, for each $t\in [0,1]$ the map $\kappa_t$ takes values in $\SC_{(\phi\oplus 0,\Iu\oplus\Iu)}$.
Let $\iota: \SC_{(\phi,\Iu)}\to \CM^{\beta^\Iu}(B)$ be the inclusion map.
Then the path $\{\kappa_t\}_{t\in [0,1]}$ witnesses the fact that the diagram
\[
\xymatrix{
\SC_{(\phi,\Iu)} \ar[rr]^\iota \ar[rd]_{\id\oplus 0} && \CM^{\beta^\Iu}(B) \ar[ld]^{0\oplus \id} \\
 & \SC_{(\phi\oplus 0,\Iu\oplus\Iu)} & 
}
\]
commutes up to homotopy.
The arrow $\id\oplus 0$ induces an isomorphism in $K$-theory by \autoref{lem:weak-absorption-K-theory}.
We can thus see that $K_*(\id\oplus 0)^{-1}\circ K_*(0\oplus\id)$ is a left inverse of $K_*(\iota)$.
This finishes the proof.
\end{proof}

\begin{cor} \label{cor:C-phi-vanishing-K-theory}
Let $\psi: A\to B$ be any $*$-homomorphism that contains itself at infinity and absorbs the zero representation.
Then $\SC_{\psi}$ has vanishing $K$-theory.
In particular, if $(\phi,\Iu): (A,\alpha)\to(\CM(B),\beta)$ is any cocycle representation such that $\phi$ contains itself at infinity and absorbs the zero representation, then the quotient map $\SD_{(\phi,\Iu)}^w\to\SB_{(\phi,\Iu)}$ (resulting from \autoref{prop:same-quotients}) induces an isomorphism in $K$-theory.
\end{cor}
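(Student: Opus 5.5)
The first claim is the case $G=\{1\}$ of \autoref{lem:general-C-K-theory}, combined with a vanishing argument coming from the infinite repeat structure. Here $\psi$ is a $*$-homomorphism $A\to\CM(B)$, viewed as a cocycle representation of the trivial group with trivial cocycle $\eins$, and $B$ is stable. By hypothesis $\psi$ absorbs $0$, so in particular it weakly absorbs $(0,\eins)$; it also contains itself at infinity. \autoref{lem:general-C-K-theory} therefore says that the inclusion $\iota:\SC_\psi\subseteq\CM(B)$ induces a split injection in $K$-theory. Since $B$ is stable, $\CM(B)$ has vanishing $K$-theory. This follows from \autoref{prop:infinite-repeat} applied with $A=0$, since the zero representation is trivially the infinite repeat of itself and $(\CM(B))^{\id}=\CM(B)$. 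A split injection into the zero group forces $K_*(\SC_\psi)=0$.

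For the second claim, we write $\SD^w=\SD^w_{(\phi,\Iu)}$. By \autoref{nota:C-phi}, $\SC_\phi\triangleleft\SD^w$ is an ideal, and by \autoref{prop:same-quotients} the quotient $\SD^w/\SC_\phi$ is identified with $\SB_{(\phi,\Iu)}$. The quotient map in question is thus exactly $\SD^w\to\SD^w/\SC_\phi$. We apply the six-term exact sequence in $K$-theory to
\[
0\to\SC_\phi\to\SD^w\to\SB_{(\phi,\Iu)}\to 0.
\]
The first part of the corollary, applied to $\psi=\phi$, gives $K_*(\SC_\phi)=0$. Note that the ideal $\SC_\phi$ ignores the dynamics entirely, and the hypotheses on $\phi$ are exactly those needed for the first part. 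Exactness then shows that the quotient map is an isomorphism on $K_0$ and on $K_1$.

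The only points needing care are bookkeeping. First, the first statement is phrased for $\psi: A\to B$, which should be read as $\psi:A\to\CM(B)$; the notation $\SC_\psi$ and the cited lemma only make sense in that setting. Second, "absorbs the zero representation" implies "weakly absorbs $(0,\eins)$" directly from the definitions, taking $W=U_0$ from the asymptotic equivalence and using the modulo-$B$ condition at $t=0$. Beyond this, the main input is \autoref{lem:general-C-K-theory}, which is already available, so I do not expect a genuine obstacle.
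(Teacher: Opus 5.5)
Your proposal is correct and follows essentially the same route as the paper: apply \autoref{lem:general-C-K-theory} with $G=\{1\}$, use $K_*(\CM(B))=0$ (the paper also notes that this follows from \autoref{prop:infinite-repeat} applied to $(0,\eins)$, as you do), and deduce the second claim from the six-term exact sequence for $0\to\SC_\phi\to\SD^w_{(\phi,\Iu)}\to\SB_{(\phi,\Iu)}\to 0$ via \autoref{prop:same-quotients}. Your bookkeeping remarks are correct details that the paper leaves implicit: $\psi$ should be read as mapping into $\CM(B)$, and absorption yields weak absorption by taking $W=U_0$.
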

\begin{proof}
Upon considering $G=\{1\}$, we may view $\psi$ as a cocycle representation in a trivial way.
The assumptions on $\psi$ translate into the assumptions required by \autoref{lem:general-C-K-theory}.
Hence the inclusion $\SC_{\psi}\subseteq\CM(B)$ is injective in $K$-theory.
The $K$-groups of $\CM(B)$ are known to vanish; see \cite[Lemma 3.2]{Cuntz81}.
This can also be deduced from applying \autoref{prop:infinite-repeat} to the special case $(\phi,\Iu)=(0,\eins)$.
The ``in particular'' part is an immediate consequence of the general properties of the $K$-theory functor.
\end{proof}

Below follows our more refined version of the Paschke duality theorem.
We note that for $G=\{1\}$, most of the terms simplify and then it just amounts to the statement of \autoref{thm:Paschke-1}, and we recover Thomsen's \cite[Theorem 3.2]{Thomsen01}.
His dynamical generalization \cite[Theorem 6.2]{Thomsen05} is very similar, but was phrased in the language of genuinely equivariant representations, whereas the result below is tailored to fit within the framework of cocycle representations.

\begin{theorem} \label{thm:Paschke-2}
Let $\alpha: G\curvearrowright A$ be an action on a separable \cstar-algebra and $\beta: G\curvearrowright B$ a strongly stable action on a $\sigma$-unital \cstar-algebra.
Let $\FC$ be an eligible set of cocycle representations $(A,\alpha)\to (\CM(B),\beta)$, and let $(\Phi,\IU)\in\FC$ be an absorbing element.
Consider the isomorphism $\Xi: K_1(\SD_{(\Phi,\IU)})\to \IE^G(\FC;\alpha,\beta)/_{\sim_{\FC,h}}$ from \autoref{thm:Paschke-1}.
Then $\Xi$ restricts to an isomorphism $K_1(\SC_{(\Phi,\IU)})\cong H_\beta$.
This induces isomorphisms $K_1(\SD_{(\Phi,\IU)}^w)\cong K_1(\SB_{(\Phi,\IU)})\cong KK^G(\FC;\alpha,\beta)$ such that the following diagram commutes:
\[
\xymatrix{
H_\beta \ar@{^{(}->}[r] & \IE^G(\FC;\alpha,\beta)/_{\sim_{\FC,h}} \ar@/^1.5pc/[l]_\Ff \ar@{->>}[r] & KK^G(\FC;\alpha,\beta) \\
K_1(\SC_{(\Phi,\IU)}) \ar[u]_{\cong} \ar[d] \ar@{^{(}->}[r] & K_1(\SD_{(\Phi,\IU)}) \ar[u]^{\Xi}_{\cong} \ar[ld] \ar@{->>}[r] & K_1(\SB_{(\Phi,\IU)}) \ar[u]_{\cong} \\
K_1(\CM^{\beta^\IU}(B)) \ar@/^4.0pc/[uu]_{\cong}^{\Xi_0} && K_1(\SD_{(\Phi,\IU)}^w) \ar[u]_{\cong}
}
\]
\end{theorem}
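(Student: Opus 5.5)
The plan is to build the diagram from the bottom up: first the left column, then the exactness of the middle row, and finally the right column. Throughout I would use three facts about $(\Phi,\IU)$. Since it is absorbing in $\FC$ and $\FC$ is $\sigma$-additive, it absorbs its infinite repeat, so it contains itself at infinity (\autoref{prop:almost-infinite-repeat}). It also absorbs $(0,\IU)\in\FC$, hence weakly absorbs it. In the same way, $\Phi$ contains itself at infinity and absorbs the zero representation.

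\emph{Step 1: the left column.} \autoref{lem:general-C-K-theory} applies and shows that $\iota:\SC_{(\Phi,\IU)}\subseteq\CM^{\beta^\IU}(B)$ is split injective on $K_*$. I would define $\Xi_0:K_1(\CM^{\beta^\IU}(B))\to H_\beta$ by running the proof of \autoref{thm:Paschke-1} for the zero algebra in place of $A$. Since $\SC_{(0,\IU)}=\CM^{\beta^\IU}(B)$, a unitary $V$ would be sent to the class of a cocycle pair of the form $\big((0,\ad(V)\IU),(0,\IU)\big)$, suitably interpreted. The proof that this is an isomorphism would copy the surjectivity and kernel arguments of \autoref{thm:Paschke-1}, using \autoref{prop:infinite-repeat} and \autoref{lem:weak-absorption-K-theory}.

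\emph{Step 2: the key identity.} The central claim is that for every unitary $W\in\CU(\eins+\SC_{(\Phi,\IU)})$ one has
\[
\Xi([W])=\Xi_0(\iota_*[W])\in H_\beta\subseteq \IE^G(\FC;\alpha,\beta)/_{\sim_{h,\FC}}.
\]
Equivalently, the Cuntz pair $\big(\ad(W)\circ(\Phi,\IU),(\Phi,\IU)\big)$ is $\FC$-homotopic to the cocycle pair $\big((0,W\IU\beta(W)^*),(0,\IU)\big)$. To prove it I would write $W=\eins+c$ with $c\Phi(A)\cup\Phi(A)c\subseteq B$. Then I would use Kasparov's technical theorem, exactly as in \autoref{prop:same-quotients}, to obtain $\beta^\IU$-invariant positive contractions $M,N$ with $M+N=\eins$, $\Phi(A)M\subseteq B$, and $N$ acting as $\eins$ on $c$ modulo $B$. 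The idea is that the representation part of the pair is carried by $M$, where $W$ acts trivially up to $B$, so it can be split off as a degenerate pair. What remains only sees the cocycle part. Concretely I would try a path of Cuntz sums via $r_{1+t}$ as in \autoref{lem:general-C-K-theory}, rotating $(\Phi,\IU)$ into $(\Phi,\IU)\oplus(0,\IU)$ while $W$ moves into the second summand.

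\emph{Step 3: the middle row.} Given Step 2, $\Xi\circ\iota'_*$ (with $\iota':\SC\to\SD$) equals $\Xi_0\circ\iota_*$, which is injective. Hence $K_1(\SC_{(\Phi,\IU)})\to K_1(\SD_{(\Phi,\IU)})$ is injective with image inside $\Xi^{-1}(H_\beta)$. For equality, take a class in $H_\beta$ and write it as $\Xi_0(\iota_*[V])$. By the split from \autoref{lem:general-C-K-theory}, I expect $V$ can be realised up to homotopy by an element of $\SC_{(\Phi,\IU)}$, after Cuntz-adding to $(\Phi,\IU)$ and conjugating back using \autoref{lem:weak-absorption-K-theory}. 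This gives the isomorphism $K_1(\SC_{(\Phi,\IU)})\cong H_\beta$. The forgetful map $\Ff$ then becomes a retraction of $K_1(\SD)$ onto $K_1(\SC)$.

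I would then feed this into the six-term sequence of $0\to\SC\to\SD\to\SB\to 0$. The index map $K_0(\SB)\to K_1(\SC)$ vanishes by injectivity. For surjectivity of $K_1(\SD)\to K_1(\SB)$, I need the exponential map $K_1(\SB)\to K_0(\SC)$ to vanish, that is, $K_0(\SC)\to K_0(\SD)$ injective. I would get this by the same split argument in degree $0$: the homotopy of \autoref{lem:general-C-K-theory} factors through $\SD_{(\Phi\oplus0,\IU\oplus\IU)}$, so $\iota_*$ factors through $K_0(\SD)$. Consequently $K_1(\SB)\cong K_1(\SD)/K_1(\SC)\cong KK^G(\FC;\alpha,\beta)$, and the top-right square commutes by construction. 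Finally, $K_1(\SD^w)\cong K_1(\SB)$ is exactly \autoref{cor:C-phi-vanishing-K-theory}.

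The main obstacle is Step 2. It requires an explicit $\FC$-homotopy in which every intermediate representation stays in $\FC$ and remains congruent modulo $B[0,1]$, together with a precise definition of $\Xi_0$ that makes the identity hold on the nose.
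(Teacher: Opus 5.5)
Your plan is essentially the paper's proof. $\Xi_0$ is \autoref{thm:Paschke-1} applied to the zero algebra; this is legitimate because $(\Phi,\IU)\asymp(\Phi,\IU)\oplus(0,\IV)$ forces $(0,\IU)$ to absorb every $(0,\IV)$, a point you should state explicitly. The identity $\Xi=\Xi_0\circ\iota_*$ is proved exactly by your concrete fallback: the norm-continuous unitary path $r_tWr_t^*+(\eins-r_tr_t^*)$ in $\eins+\SC_{(\Phi\oplus 0,\IU\oplus\IU)}$ runs from $W\oplus\eins$ to $\eins\oplus W$, and conjugating $(\Phi,\IU)\oplus(0,\IU)$ along it is an $\FC$-homotopy, so Kasparov's technical theorem is not needed and your ``main obstacle'' dissolves. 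Surjectivity onto $H_\beta$ is your split: write the class as $\Xi_0([V])$, pull $\eins\oplus V\in\CU(\eins+\SC_{(\Phi\oplus 0,\IU\oplus\IU)})$ back along the $K_1$-isomorphism of \autoref{lem:weak-absorption-K-theory}, and use $K_1$-injectivity of $\SD_{(\Phi\oplus 0,\IU\oplus\IU)}$.

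Your check that the boundary map $K_1(\SB_{(\Phi,\IU)})\to K_0(\SC_{(\Phi,\IU)})$ vanishes is a step the paper passes over. The inclusion $\SC_{(\Phi,\IU)}\subseteq\CM^{\beta^\IU}(B)$ factors through $\SD_{(\Phi,\IU)}$ and is $K_0$-injective by \autoref{lem:general-C-K-theory}, whereas the left inverse $\Ff$ alone only yields injectivity of $K_1(\SC_{(\Phi,\IU)})\to K_1(\SD_{(\Phi,\IU)})$.
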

\begin{proof}
Before we begin with the proof, we point out a basic fact that we use below for convenience.
All of the \cstar-algebras appearing in the diagram above are either unital properly infinite \cstar-algebras or ideals in such a \cstar-algebra, hence any element in the $K_1$-group has a representative in the unitary group of the \cstar-algebra.
This makes it possible to avoid higher matrix amplifications below.

We begin by first explaining in what sense one should understand the leftmost upward arrow $\Xi_0$.
One has $\CM^{\beta^\IU}(B)=\SD_{(0,\IU)}$ upon comparing definitions.
Since $(\Phi,\IU)$ is an absorbing element of $\FC$ and $\FC$ contains all elements of the form $(0,\IV)$, it follows that $(\Phi,\IU)\asymp (\Phi\oplus 0,\IU\oplus\IV)$ for every $\beta$-cocycle $\IV$.
Since this implies $(0,\IU)\asymp ( 0,\IU\oplus\IV)$, it follows that $(0,\IU)$ is an absorbing cocycle representation when viewed as one with domain $(A,\alpha)=(\{0\},\id_{\{0\}})$.
Thus we may apply \autoref{thm:Paschke-1} in this sense and conclude that
\[
\Xi_0: K_1(\CM^{\beta^\IU}(B))\to H_\beta=\IE^G(\id_{\{0\}},\beta)/_{\sim_{h}}
\]
given by $[W]\mapsto\big[ \ad(W)\circ(0,\IU), (0,\IU) \big]$ is an isomorphism.

On the other hand, when we take $[U]\in K_1(\SD_{(\Phi,\IU)})$, then we see
\[
H_\beta\ni (\Ff\circ\Xi)([U])=\Ff\big[ \ad(U)\circ(\Phi,\IU), (\Phi,\IU) \big]=\big[ \ad(U)\circ(0,\IU), (0,\IU) \big].
\]
This means that the composition $\Ff\circ\Xi$ agrees with the composition of $K_1(\SD_{(\Phi,\IU)})\to K_1(\CM^{\beta^\IU}(B))\stackrel{\Xi_0}{\to} H_\beta$.

Now let $[W]\in K_1(\SC_{(\Phi,\IU)})$ for some $W\in\CU(\eins+\SC_{(\Phi,\IU)})$.
Let $r_1,r_2\in\CM(B)^\beta$ be two isometries with $r_1r_1^*+r_2r_2^*=\eins$, which we use to form Cuntz sums.
Exactly as in the proof of \autoref{lem:general-C-K-theory}, we consider the norm-continuous path of isometries $r_{1+t}:=\sqrt{1-t}\cdot r_1+\sqrt{t}\cdot r_2$ for $t\in [0,1]$ and the point-norm continuous family of embeddings $\kappa_t: \SC_{(\Phi,\IU)}\to\SC_{(\Phi\oplus 0,\IU\oplus\IU)}$ for $t\in [0,1]$ given by $\kappa_t(x)=r_{1+t} x r_{1+t}^*$.

The assignment $[0,1]\ni t\mapsto r_t W r_t^*+(\eins-r_tr_t^*)$ induces a homotopy between $W\oplus\eins$ and $\eins\oplus W$ inside $\CU(\eins+\SC_{(\Phi\oplus 0,\IU\oplus\IU)})$.
This yields a chain of $\FC$-homotopies
\[
\begin{array}{ll}
\multicolumn{2}{l}{ \big( \ad(W)\circ(\Phi,\IU), (\Phi,\IU) \big) }\\
\sim_{h,\FC}& \big( \ad(W\oplus\eins)\circ(\Phi\oplus 0,\IU\oplus\IU), (\Phi\oplus 0,\IU\oplus\IU) \big) \\
\sim_{h,\FC}& \big( \ad(\eins\oplus W)\circ(\Phi\oplus 0,\IU\oplus\IU), (\Phi\oplus 0,\IU\oplus\IU) \big) \\
\sim_{h,\FC}& \big( \ad(W)\circ(0,\IU), (0,\IU) \big).
\end{array}
\]
Hence the composition of the maps $K_1(\SC_{(\Phi,\IU)})\to K_1(\CM^{\beta^\IU}(B))\stackrel{\Xi}{\to} H_\beta\to \IE^G(\FC;\alpha,\beta)/_{\sim_{\FC,h}}$ is equal to the composition of the maps $K_1(\SC_{(\Phi,\IU)})\to K_1(\SD_{(\Phi,\IU)})\stackrel{\Xi}{\to}\IE^G(\FC;\alpha,\beta)/_{\sim_{\FC,h}}$.
This induces the homomor\-phism
\[
K_1(\SC_{(\Phi,\IU)})\to H_\beta
\]
such that the whole left half of the diagram commutes.
It is injective by \autoref{lem:general-C-K-theory}.
We claim that it is also surjective.
Given any $x\in H_\beta$, we may appeal to the surjectivity of $\Xi_0$ and find a unitary $V\in\CM^{\beta^\IU}$ such that $x=\big[ \ad(V)\circ(0,\IU), (0,\IU) \big]$.
Since the map 
\[
K_1(\id\oplus 0): K_1(\SC_{(\Phi,\IU)})\to K_1(\SC_{(\Phi\oplus 0,\IU\oplus\IU)})
\] 
is an isomorphism by \autoref{lem:weak-absorption-K-theory}, we find a unitary $W\in\CU(\eins+\SC_{(\Phi,\IU)})$ with $[W\oplus\eins]=[\eins\oplus V]$ in $K_1(\SC_{(\Phi\oplus 0,\IU\oplus\IU)})$.
By the $K_1$-injectivity of $\SD_{(\Phi\oplus 0,\IU\oplus\IU)}$, we have that $W\oplus\eins$ and $\eins\oplus V$ are homotopic inside the unitary group of $\SD_{(\Phi\oplus 0,\IU\oplus\IU)}$.
The analogous computation to the above then shows that
\[
\big[ \ad(W)\circ(\Phi,\IU), (0,\IU) \big]=\big[ \ad(V)\circ(0,\IU), (0,\IU) \big]=x.
\]
This takes care of the left half of the diagram.

The isomorphism $K_1(\SB_{(\Phi,\IU)})\to KK^G(\FC;\alpha,\beta)$ making the rest of the diagram commute is then uniquely induced because 
\[
K_1(\SC_{(\Phi,\IU)})\hookrightarrow K_1(\SD_{(\Phi,\IU)})\twoheadrightarrow K_1(\SB_{(\Phi,\IU)})
\]
is also a short exact sequence due to $\Ff$ inducing a left inverse for the first arrow.
We have already observed in \autoref{prop:same-quotients} and \autoref{cor:C-phi-vanishing-K-theory} that the arrow $K_1(\SD^w_{(\Phi,\IU)})\to K_1(\SB_{(\Phi,\IU)})$ is a well-defined isomorphism.
This finishes the proof.
\end{proof}

\begin{rem}
Let $\FC$ be an eligible set of cocycle representations $(A,\alpha)\to(\CM(B),\beta)$ and let $(\Phi,\IU)\in\FC$ be an absorbing element.
The isomorphism $K_1(\SB_{(\Phi,\IU)})\cong KK^G(\FC;\alpha,\beta)$ from \autoref{thm:Paschke-2} is, strictly speaking, only the first of two possible versions of Paschke duality in our context.
It can be viewed as a generalization of \cite[Theorem 6.2]{Thomsen05} in the language of cocycle representations instead of genuinely equivariant ones.
The second version of Paschke duality should relate $K_0(\SB_{(\Phi,\IU)})$ to a generalized version of the group $KK^G_1(\alpha,\beta)$ and/or $\operatorname{Ext}^G(\alpha,\beta)$ built from cocycle representations or Busby maps related to $\FC$.
A rigorous investigation of how such generalized equivariant Ext- or $KK_1$-groups would actually look like is beyond the scope of this article, but the apparent template for this kind of second Paschke duality would be \cite[Theorems 6.3, 6.5]{Thomsen05}; see also \cite[Section 3]{Thomsen01}.
For $\FC$ being the set of all cocycle representations, the related constructions are available in the literature, and such a result could in this case perhaps be achieved with some extra work if one appeals to \cite[Lemma 4.4]{Thomsen00}.
Since that result and its proof appeal to various technical results in the Kasparov picture of $KK^G_1$ (including the general form of the Kasparov product and its properties from \cite{Kasparov81}), it is not immediately clear how to generalize the argument to the present context in a self-contained manner.
\end{rem}

%%%%%%%%%%%%%%%%%%%%%%%%%%%%%%%%%%%%%%%%%%%%%%%%%%%%%%%%%%%%%%%%%%

%\newpage

\subsection*{Acknowledgements} \addcontentsline{toc}{section}{Acknowledgements}
The author was supported by the European Research Council under the European Union's Horizon Europe research and innovation programme (ERC grant AMEN--101124789).
For the purpose of open access, the author applies a CC BY public copyright license to any author accepted manuscript version arising from the submission of this work.

I would like to thank Stuart White, Shanshan Hua, James Gabe, Sergio Girón Pacheco and Robert Neagu for fruitful conversations regarding the topic of this work, and for providing feedback on earlier versions of the article.
This work has benefited from participation in the programme \emph{Topological groupoids and their \cstar-algebras} in July 2025 and I thank the Isaac Newton Institute for Mathematical Sciences, Cambridge, for the support and hospitality.
That programme was supported by EPSRC grant EP/V521929/1.

%%%%%%%%%%%%%%%%%%%%%%%%%%%%%%%%%%%%%%%%%%%%%%%%%%%%%%%%%%%%%%%%%%

\end{document}